\newcommand{\Lt}[2]{L^{2}(#1,#2)}
\newcommand{\Ltr}{\Lt{\Omega}{\rho}}
\newcommand{\Ltt}{\Lt{\Omega}{\tau}}
\newcommand{\CNN}{\mathbb{C}^{N\times N}}
\newcommand{\CMN}{\mathbb{C}^{M\times N}}
\newcommand{\CN}{\mathbb{C}^{N}}
\newcommand{\CM}{\mathbb{C}^{M}}
\newcommand*{\bb}{\bm{b}} 
\newcommand*{\bc}{\bm{c}}
\newcommand*{\bx}{\bm{x}} 
\newcommand*{\by}{\bm{y}} 
\newcommand*{\bz}{\bm{z}}
\newcommand{\smM}{\sum_{m=1}^{M}}
\newcommand{\siN}{\sum_{i=1}^{N}}
\newcommand{\brac}[1]{\left\lbrace #1\right\rbrace }
\newcommand{\Nikw}{\mathcal{N}(P,\tau,w)}
\newcommand{\bG}{\bm{G}}
\newcommand{\bI}{\bm{I}}
\newcommand{\bX}{\bm{X}}
\newcommand{\lmi}[1]{\lambda_{\min}\left( #1\right)}
\newcommand{\lma}[1]{\lambda_{\max}\left(#1\right)}
\newcommand{\hop}[1]{\mathbb{E}(#1)}
\newcommand{\mP}[1]{\mathbb{P}\left(#1\right)}
\newcommand{\nmt}[1]{\nm{#1}_{\tau,w}}
\title{Near-optimal sampling strategies for multivariate function approximation on general domains}
\author{Ben Adcock and Juan M.\ Cardenas \\ Department of Mathematics \\ Simon Fraser University \\ Canada}
\begin{document}

\newpage

\setcounter{page}{1}

\maketitle

\begin{abstract}
In this paper, we address the problem of approximating a multivariate function defined on a general domain in $d$ dimensions from sample points.  We consider weighted least-squares approximation in an arbitrary finite-dimensional space $P$ from independent random samples taken according to a suitable measure. In general, least-squares approximations can be inaccurate and ill-conditioned when the number of sample points $M$ is close to $N = \dim(P)$. To counteract this, we introduce a novel method for sampling in general domains which leads to provably accurate and well-conditioned approximations.  The resulting sampling measure is discrete, and therefore straightforward to sample from.  Our main result shows near-optimal sample complexity for this procedure; specifically, $M = \mathcal{O}(N \log(N))$ samples suffice for a well-conditioned and accurate approximation.  Numerical experiments on polynomial approximation in general domains confirm the benefits of this method over standard sampling.
\end{abstract}

\section{Introduction}

In this paper, we consider the problem of approximating a multivariate function $f : \Omega \rightarrow \bbC$ of $d \geq 1$ variables whose domain $\Omega \subseteq \bbR^d$ may be irregular.  This problem arises in many applications in computational science and engineering, and presents two main challenges.  First, the well known \textit{curse of dimensionality}, and second, the potential \textit{irregularity} of the domain $\Omega$.  While there has been significant progress made towards mitigating the former (see \S \ref{ss:relation}), the majority of this work has focused on the case of tensor-product domains, for instance, the unit hypercube $\Omega = [-1,1]^d$.  Far less attention has been paid to the case of irregular domains.

Recently, in \cite{adcock2018approximating} the first author developed a framework for polynomial approximation of smooth functions in general domains in $d$ dimensions.  The approach, known as \textit{polynomial frame approximation}, is based on regularized least-squares approximation using orthonormal polynomials on a bounding hypercube and random sampling from the restriction of the orthogonality measure to $\Omega$.

For certain domains and polynomial spaces, this procedure has provable bounds on the \textit{sample complexity}; that is, the scaling between the dimension of the approximation space $N$ and the number of pointwise samples $M$ which is sufficient to guarantee a well conditioned and accurate approximation.  While these bounds are independent of the dimension $d$, and therefore ameliorate the curse of dimensionality {in the number of function evaluations}, the best known bounds are quadratic in $N$, i.e.\ $M = \ord{N^2 \log(N)}$, and are known to hold only for domains possessing the so-called \textit{$\lambda$-rectangle property} and polynomial spaces corresponding to \textit{lower sets} of multi-indices.
The reason for this can be traced to the choice of measure from which the sample points are drawn.  In the case of Legendre polynomials, for instance, this is simply the uniform measure over $\Omega$, which is known to be a relatively poor distribution for polynomial approximation.  

In recent works \cite{ArrasEtAlAdaptive,MiglioratiCohenOptimal,MiglioratiAdaptive}, it has been shown how construct a sampling measure depending on the space $P$ which leads to the near-optimal scaling $M = \ord{N \log(N)}$, where $N$ is the dimension of $P$.  Note that $P$ can be an arbitrary finite-dimensional subspace in this setup; it need not be a space of polynomials.  Unfortunately, the practical implementation of this approach requires two ingredients: first, an orthonormal basis for $P$, and second, a tensorial structure for the corresponding basis functions.  The latter is used in order to efficiently sample from the constructed measure.  Orthonormal polynomials on hypercubes typically exhibit both these qualities; for instance, the Legendre polynomials on $[-1,1]^d$ are simply the tensor-products of the univariate Legendre polynomials on $[-1,1]$, thus both tensorial and easy to construct.  However, for irregular domains, neither property holds in general.

In this work, we combine the ideas of \cite{adcock2018approximating} and \cite{MiglioratiCohenOptimal}, as well as those of \cite{MiglioratiAdaptive}, to construct a weighted least-squares approximation on general domains with the near-optimal sample complexity $M = \ord{N \log(N)}$. {Note that our main results guarantee, under this scaling, an error bound relating the $L^2$-norm error to a best approximation term in a certain weighted sup-norm (this is not quite optimal -- see Remark \ref{r:L2normerr}).}   
Our method is based on three steps.  First, using the results of \cite{adcock2018approximating} we generate a fine grid of $K \gg N$ points over the domain $\Omega$. {Throughout, we assume this step is computationally feasible; see \S \ref{s:conclusion} for further discussion on this topic.}  Second, starting from a nonorthogonal basis for the approximation space -- for example, as in \cite{adcock2018approximating,BADHframespart}, the restriction of an orthonormal basis on a bounding box to $\Omega$ -- we construct an orthonormal basis with respect to the corresponding discrete measure supported on the grid {(note that the approach in \cite{adcock2018approximating,BADHframespart} does not seek to orthogonalize the original basis).}  Third, we use the ideas of \cite{MiglioratiCohenOptimal,MiglioratiAdaptive} to generate a near-optimal sampling measure.  Unlike in these works, the resulting sampling measure is discrete, supported on the grid of $K$ points, and therefore straightforward to sample randomly from.  Following ideas from \cite{MiglioratiAdaptive}, we present two versions of our approach.  The first method (Method 1) considers a fixed approximation space $P$, while the second (Method 2) considers a sequence of nested spaces $P_1 \subset P_2 \subset P_3 \subset \ldots$.  The second method has the benefit of being \textit{adaptive}: all the sample points used to compute the approximation in the space $P_i$ are recycled when computing the approximation in $P_{i+1}$.

To demonstrate the effectiveness of these two methods, we present numerical experiments showing polynomial approximation on general domains in arbitrary dimensions.  For many domains in various dimensions, the new sampling methods achieve better accuracy and stability than when drawing samples from the uniform measure, as was done in \cite{adcock2018approximating}.  Furthermore, the adaptive method (Method 2) leads to no deterioration in accuracy or stability over Method 1.

\subsection{Related work}\label{ss:relation}

Motivated by applications in uncertainty quantification and parametric PDEs, least-squares polynomial approximation of high-dimensional, smooth functions has received significant attention over the last ten years.  The majority of works have focused on tensor-product domains.  Besides \cite{adcock2018approximating}, mentioned above, very few works have considered the question of general domains $\Omega$.  {Note that \cite{adcock2018approximating}, based on ideas of \cite{BADHframespart,BADHFramesPart2}, uses Singular Value Decomposition (SVD) followed by thresholding of the singular values to address the nonorthogonality of the approximation system. Conversely, in this paper, we use QR decomposition to explicitly orthogonalize the basis.  The resulting orthonormal basis is used to construct the near-optimal sampling measure and then to compute the final approximation.}

Since it is often a critical constraint in practice, a major focus of previous work on least-squares polynomial approximation has been quantifying the sample complexity when the samples are drawn randomly from the orthogonality measure of the polynomial basis employed.  See \cite{DavenportEtAlLeastSquares,MiglioratiThesis,MiglioratiJAT,MiglioratiEtAlFoCM} and references therein.  Unfortunately, these methods tend to have superlinear sample complexity.  Approaches at designing sampling measures which lead to log-linear sample complexity have been considered in, for instance, \cite{NarayanJakemanZhouChristoffelLS}, the aforementioned works \cite{MiglioratiCohenOptimal,MiglioratiAdaptive} and \cite{ArrasEtAlAdaptive}.

We remark in passing that least-squares approximation, as we consider in this paper, is but one approach for polynomial approximation in high dimensions.  A related, but distinct, line of work uses compressed sensing techniques for this problem.  See \cite{AdcockCSFunInterp,ABBCorrecting,BASBCWMatheon,ChkifaDownwardsCS,DoostanOwhadiSparse,HamptonDoostanCSPCE,NarayanZhouCCP} and references therein.  This approach is quite powerful, since, unlike least squares, it does not require one to specify \textit{a priori} the approximation space $P$.  However, it is not yet known how to perform (provably) optimal sampling in the compressed sensing setting.

{
Finally, we note that a similar approach to Method 1 of this paper, with corresponding theoretical analysis, has also been developed simultaneously by Migliorati \cite{MiglioratiIrregular}.
}

\subsection{Outline}

The outline of the remainder of this paper is as follows.  We first summarize our two methods, Method 1 and Method 2, in \S \ref{s:summary}.  In \S \ref{s:theory} we present the main theoretical analysis of these methods.  Proofs of the results presented in this section are given in \S \ref{sec:proofs}.  We conclude in \S \ref{s:numexamp} with numerical examples.

\section{Summary of the methods}\label{s:summary}

We now present our two main methods: Nonadaptive sampling (Method 1) and Adaptive sampling (Method 2).

\subsection{Method 1}

\begin{tcolorbox}[floatplacement=ht,float,title=Method 1.\ Nonadaptive sampling for general domains]
\noindent \textbf{Inputs:} Domain $\Omega$, probability measure $\rho$ over $\Omega$, function $f \in L^\infty(\Omega)$.
\\  Finite-dimensional subspace $P \subset L^2(\Omega,\rho)$ of dimension $N$.
\\ Basis $\{ \psi_1,\ldots,\psi_N \}$ for $P$ (not necessarily orthogonal).
\\ Number of sample points $M \geq N$, fine grid size $K \geq N$.
\\
\\
\noindent \textbf{Step 1:} Draw $K$ points $Z = \{ \bm{z}_i \}^{K}_{i=1}$ independently from $\rho$.  
\\
\noindent \textbf{Step 2:}
Construct the $K \times N$ matrix $\bm{B} = \{ \psi_j(\bm{z}_i) / \sqrt{K} \}^{K,N}_{i,j=1}$ and {check whether or not $\bm{B}$ has full rank ($\mathrm{rank}(\bm{B}) = N$). If not, go back to Step 1. Else proceed to Step 3. }
%
\\
\noindent {\textbf{Step 3:}} Compute the reduced QR decomposition $\bm{B} = \bm{Q} \bm{R}$, where $\bm{Q} = \{ q_{ij} \} \in \bbC^{K \times N}$ and $\bm{R} \in \bbC^{N \times N}$.  Define the probability distribution $\pi = \{\pi_i \}^{K}_{i=1}$ on $\{1,\ldots,K\}$ by
\bes{
\pi_i = \frac1N \sum^{N}_{j=1} |q_{ij} |^2,\quad i = 1,\ldots,K.
}
\noindent {\textbf{Step 4:}} Draw $M$ integers $i_1,\ldots,i_M$ independently from $\pi$, define
\bes{
\bm{A} = \left \{ \frac{q_{i_j,k}}{\sqrt{M \pi_{i_j}}}  \right \}^{M,N}_{j,k=1} \in \bbC^{M \times N},\quad \bm{b} = \left \{ \frac{f(\bm{z}_{i_j}) }{\sqrt{M K \pi_{i_j}}} \right \}^{M}_{j= 1} \in \bbC^M,
}
and compute $\bm{c} = \argmin{\bm{x} \in \bbC^N} \nm{\bm{A} \bm{x} - \bm{b}}_{2}$.
\\
\\
\textbf{Output:} The approximation $\tilde{f}(\bm{y}) = \sum^{N}_{i=1} c_i \phi_i(\bm{y})$, where $\phi_i(\bm{y}) = \sum^{i}_{j=1} (R^{-*})_{ij} \psi_j(\bm{y})$.
\end{tcolorbox}

In Method 1 (shown in the box below) we compute an approximation $\tilde{f} \in P$ to a function $f$ from a fixed subspace $P$ using the set of samples $\{ f(\bm{y}_i) \}^{M}_{i=1}$.  The sample points $\bm{y}_1,\ldots,\bm{y}_M$ are drawn independently and identically from the grid $Z$ according to probability distribution $\pi$.  In other words, $\bm{y}_i \sim \mu$, where $\mu$ is the discrete \textit{sampling measure}
\bes{
\D \mu(\bm{y}) = \sum^{K}_{i=1} \pi_i \delta(\bm{y} - \bm{z}_i),\qquad \bm{y} \in \Omega.
}
Observe that this measure is precisely
\bes{
\D \mu(\bm{y}) = \sum^{K}_{i=1} \left ( \frac1N \sum^{N}_{i=1} |\phi_i(\bm{y})|^2 \right )^{-1} \delta(\bm{y} - \bm{z}_i).
}
The function $\sum^{N}_{i=1} |\phi_i(\bm{y})|^2$ is the {reciprocal of the \textit{Christoffel function} of $P$ \cite{NevaiFreud}}, which was previously identified in \cite{MiglioratiCohenOptimal} as a suitable measure from which to obtain optimal sampling.

{Note that the reduced QR decomposition in Step 3 of Method 1 (and likewise for Method 2) refers to the decomposition $\bm{B} = \bm{Q} \bm{R}$, where $\bm{Q}$ is $K \times N$ with orthonormal columns, i.e.\ $\bm{Q}^* \bm{Q} = \bm{I}$ and $\bm{R}$ is $N \times N$ and upper triangular.}

Several remarks are in order.  First, both Method 1 and Method 2 (described next) assume it is possible to draw samples from the probability measure $\rho$.  This can be achieved via, for example, rejection sampling, as we do in our experiments later.  However, this may not be feasible in all settings, depending on the problem at hand.  {Second, in \S \ref{s:Zsize} we derive guarantees on $K$ which ensure $\bm{B}$ has full rank with high probability (see Step 2). Naturally, in practice, if $\bm{B}$ fails to be full rank then, rather than throwing away the current points, one may instead prefer to increase $K$ and draw additional points until full rankness is achieved.  See \cite{MiglioratiIrregular} for further discussion on this issue. Third,} note that if $\tilde{f}$ is only sought on the fine grid, then the computation of the functions $\phi_i$ in the final stage is unnecessary.  Since
\bes{
\{ \tilde{f}(\bm{z_i}) \}^{K}_{i=1} = \sqrt{K} Q \bm{c},
}
evaluating $\tilde{f}$ on this grid involves only a simple matrix-vector multiplication.  Fourth, we remark in passing that the scalings of the rows of $\bm{A}$ and $\bm{b}$ are to ensure good conditioning of $\bm{A}$, under suitable conditions on $M$ and $N$.  See Theorem \ref{t:Method1} below.  Finally, we note the computational cost {is (assuming the $\psi_j$'s are cheap to evaluate) dominated by the cost of computing the QR decomposition of $\bm{B}$, which involves an offline cost of $\ordu{K N^2}$ flops, and solving the least-squares problem to obtain $\bm{c}$, which involves an online cost of $\ordu{M N^2}$ flops.}

Two main questions we investigate in this paper are how large $M$ needs to be in comparison to $N$ and how large $K$ needs to be in comparison to $N$.  Note that the former pertains to the sample complexity of the method.  As we show later, with the probability distribution defined in Step 3, the \textit{log-linear} scaling $M \asymp N \log(N)$ is sufficient for a well conditioned approximation which also accurately approximates $f$ over the fine grid $Z$.  {Besides better sample complexity, this also compares favourably with the method of \cite{adcock2018approximating} in terms of the online computational cost, which is $\ord{N^3 \log(N)}$ for any domain, whereas the cost in \cite{adcock2018approximating}  is $\ord{N^4 \log(N)}$ and only for certain domains (i.e.\ those for which the sample complexity is provably quadratic in $N$).} 

To ensure the approximation is also accurate over $\Omega$, we need $K$ to be sufficiently large in relation to $N$.  Currently, we have no complete answer for general domains and spaces $P$.  However, if $\rho$ is the uniform measure and $P$ is a polynomial subspace based on a so-called \textit{lower set} of multi-indices (as is typical in practice), then we show that $K \asymp N^2 \lambda^{-1} \log(N)$ is sufficient, provided the domain $\Omega$ has the so-called \textit{$\lambda$-rectangle property}.  We discuss this further in \S \ref{s:Zsize}.

\subsection{Method 2}

Method 1 has the limitation that if the subspace $P$ is augmented to a larger space $\tilde{P} \supset P$, the existing sample points $\bm{y}_1,\ldots,\bm{y}_M$ are not sampled from the appropriate distribution for $\tilde{P}$.  In Method 2, following ideas of \cite{MiglioratiAdaptive}, we consider an adaptive procedure in which all samples are recycled as the $P$ is increased.

\begin{tcolorbox}[floatplacement=!th,float,title=Method 2.\ Adaptive sampling for general domains]
\noindent \textbf{Inputs:} Domain $\Omega$, probability measure $\rho$ over $\Omega$, function $f \in L^\infty(\Omega)$.
\\ Subspaces $P_1 \subset P_2 \subset \ldots \subset P_{r} \subset L^2(\Omega,\rho)$ of dimensions $N_1 < N_2 < \ldots < N_{r} < \infty$.
\\ A set of functions $\{ \psi_1,\ldots,\psi_{N_r} \}$ such that $\{ \psi_1,\ldots,\psi_{N_t} \}$ is a basis of $P_{N_t}$ for each $t = 1,\ldots,r$.
\\ Sampling ratios $k_1 \leq k_2 \leq \ldots \leq k_r$ with $k_t \in \bbN$, fine grid size $K \geq N_r$.
\\
\\
\noindent \textbf{Step 1:} Draw $K$ points $\{ \bm{z}_i \}^{K}_{i=1}$ independently from $\rho$.
\\
\noindent \textbf{Step 2:}
Construct the $K \times N_r$ matrix $\bm{B} = \{ \psi_j(\bm{z}_i) / \sqrt{K} \}^{K,N_r}_{i,j=1}$ and {check whether or not $\bm{B}$ has full rank ($\mathrm{rank}(\bm{B}) = N_r$). If not, go back to Step 1. Else set $N_0 = 0$, $k_0 = 0$, $M_{0} = 0$, $t = 1$ and proceed to Step 3. }
\\
\noindent {\textbf{Step 3:}} Construct the $K \times N_t$ matrix $\bm{B} = \{ \psi_j(\bm{z}_i) / \sqrt{K} \}^{K,N_t}_{i,j=1}$ and compute its reduced QR decomposition $\bm{B} = \bm{Q} \bm{R}$, where $\bm{Q} = \{ q_{ij} \} \in \bbC^{K \times N_t}$ and $\bm{R} \in \bbC^{N_t \times N_t}$.
\\
\noindent {\textbf{Step 4:}} For each $l = N_{t-1}+1,\ldots,N_t$ define probability distributions $\pi^{(l)} = \{\pi^{(l)}_i \}^{K}_{i=1}$ on $\{1,\ldots,K\}$ by
\bes{
\pi^{(l)}_i = |q_{i l} |^2,\quad i = 1,\ldots,K.
}
\noindent {\textbf{Step 5:}} Set $M_t = k_t N_t$.  For each $l = 1,\ldots,N_{t-1}$ draw $k_t - k_{t-1}$ integers independently from $\pi^{(l)}$, and for each $l = N_{t-1}+1,\ldots,N_t$ draw $k_t$ integers independently from $\pi^{(l)}$.  This gives $M_t - M_{t-1}$ new integers, and $M_t$ integers $i_1,\ldots,i_{M_t}$ in total.
\\
\noindent {\textbf{Step 6:}} 
Define
\bes{
\bm{A} = \left \{ \frac{q_{i_j,k}}{\sqrt{\frac{M_t}{N_t} \sum^{N_t}_{l=1} \pi^{(l)}_{i_j}} }  \right \}^{M_t,N_t}_{j,k=1} \in \bbC^{M_t \times N_t},\quad \bm{b} = \left \{ \frac{f(\bm{z}_{i_j}) }{ \sqrt{\frac{M_t K}{N_t} \sum^{N_t}_{l=1} \pi^{(l)}_{i_j}}   } \right \}^{M_t}_{j= 1} \in \bbC^{M_t}.
}
and compute $\bm{c}^{(t)} = \argmin{\bm{x} \in \bbC^{N_t}} \nm{\bm{A} \bm{x} - \bm{b}}_{2}$.
\\
\noindent {\textbf{Step 7:}}  If $t < r$ increment $t$ by one and repeat {Steps 3--7}.
\\
\\
\textbf{Output:} The approximations
$\tilde{f}^{(t)}(\bm{y}) = \sum^{N_t}_{i=1} c^{(t)}_i \phi_i(\bm{y})$, $t = 1,\ldots,r$, where $\phi_i(\bm{y}) = \sum^{i}_{j=1} (R^{-*})_{ij} \psi_j(\bm{y})$.
\end{tcolorbox}

In this method, the sample complexity for subspace $P_t$ is $M_t = k_t N_t$.  As we show later, a suitable choice of $k_t$ to ensure a sequence of well conditioned and accurate approximations is $k_t \asymp \log(N_t)$.  We note also that the QR decomposition computed in {Step 3} need not be done from scratch at each step.  One can use standard methods to update the decomposition according to the new columns added at each step.  See, for example, \cite[Chpt.\ 24]{lawson1995solving}.

\subsection{Main theoretical results}

Having presented our two methods, we now summarize their stability and accuracy, and in particular, the conditions on $M$ and $K$.  This is the topic of the following two theorems.  To this end, we now define the \textit{Nikolskii constant} $\cN(P,\rho)$ as the smallest possible constant such that
\be{
\label{NikrhoIntro}
\nm{p}_{L^{\infty}(\Omega)} \leq \cN(P,\rho) \nm{p}_{\Ltr}, \qquad \forall p \in P.
}
See \S \ref{s:Zsize} for further information.

\thm{
\label{t:Method1}
Consider the setup of Method 1.  Suppose that {$ \gamma,\delta \in (0,1)$} and
\eas{
M &\geq N \log(4N/\gamma) \left ( (1+\delta) \log(1+\delta) - \delta \right )^{-1},
\\
K &\geq (\cN(P,\rho))^2 \log(2 N/\gamma) \left ( (1-\delta) \log(1-\delta) + \delta \right )^{-1},
}
where $\cN(P,\rho)$ is as in \R{NikrhoIntro}.  Then the following holds with probability at least $1-\gamma$:
\begin{enumerate}
\item[(i)] the matrix $\bm{B}$ is full rank,
\item[(ii)] the condition number of the matrix $\bm{A}$ satisfies $\kappa(\bm{A}) \leq \sqrt{\frac{1+\delta}{1-\delta}}$, 
\item[(iii)] for any $f \in L^{\infty}(\Omega)$ the approximation $\tilde{f}$ is unique and satisfies
\bes{
\nmu{f - \tilde{f}}_{L^2(\Omega,\rho)} \leq \inf_{p \in P} \left \{ \nm{f - p}_{L^2(\Omega,\rho)} + \frac{1}{1-\delta} \tnm{f - p}_{Z,\pi} \right \},
}
where $\tnm{g}_{Z,\pi} = \max_{i=1,\ldots,K} \left \{ \frac{|g(\bm{z}_i)|}{\sqrt{K \pi_i}} \right \}$.
\end{enumerate}
}

\thm{
\label{t:Method2}
Consider the setup of Method 2.  Suppose that {$ \gamma,\delta \in (0,1)$, $\gamma_1,\ldots,\gamma_r \in (0,1)$} with $\sum^{r}_{t=1} \gamma_t = \gamma$ and
\eas{
k_t &\geq \log(4N/\gamma_t) \left ( (1+\delta) \log(1+\delta) - \delta \right )^{-1},\qquad t = 1,\ldots,r,
\\
K &\geq (\cN(P_r,\rho))^2 \log(2 N_r/\gamma) \left ( (1-\delta) \log(1-\delta) + \delta \right )^{-1},
}
where $\cN(P_r,\rho)$ is as in \R{NikrhoIntro} with $P = P_r$.  Then the following holds with probability at least $1-\gamma$.  For every $t = 1,\ldots,r$,
\begin{enumerate}
\item[(i)]  the matrix $\bm{B}$ of {Step 3} is full rank,
\item[(ii)] the condition number of the matrix $\bm{A}$ in {Step 6} satisfies $\kappa(\bm{A}) \leq \sqrt{\frac{1+\delta}{1-\delta}}$, 
\item[(iii)] for any $f \in L^{\infty}(\Omega)$ the approximation $\tilde{f}^{(t)}$ is unique and satisfies
\bes{
\nmu{f - \tilde{f}^{(t)}}_{L^2(\Omega,\rho)} \leq \inf_{p \in P_t} \left \{ \nm{f - p}_{L^2(\Omega,\rho)} + \frac{1}{1-\delta} \tnm{f - p}_{Z,\pi,t} \right \},
}
where $\tnm{g}_{Z,\pi,t} = \max_{i=1,\ldots,K} \left \{ \frac{|g(\bm{z}_i)|}{\sqrt{\frac{K}{N_t} \sum^{N_t}_{l=1} \pi^{(l)}_i}} \right \}$.
\end{enumerate}
}

\section{Theoretical analysis}\label{s:theory}

We now present our main analysis.  To do so, we introduce a more general framework than that considered above, in which we consider three distinct quantities: an \textit{error measure} $\rho$, an \textit{orthogonality measure} $\tau$, and \textit{sampling measures} $\mu_1,\ldots,\mu_M$.  Both Methods 1 and Methods 2 correspond to specific cases of this framework in which the orthogonality measure $\tau$ is a discrete measure over the grid $Z$.  The difference between the two methods lies with the choices of the sampling measures $\mu_i$.  Note that this framework is general, and includes those of \cite{MiglioratiCohenOptimal,MiglioratiAdaptive} as special cases.  Specifically, they correspond to the choice $\tau = \rho$ and specific choices of the sampling measures $\mu_i$ (see later).  The flexibility gained by allowing a distinct orthogonality measure $\tau$ is what leads to Methods 1 and 2.

\subsection{General setup}

Consider the space $\Ltr$ of square-integrable functions over a domain $\Omega \subseteq \bbR^d$ with respect to a probability measure $\rho$.  We refer to $\rho$ as the \textit{error measure}: it gives the norm in which we measure the error of our approximation.
Next, we define a second measure $\tau$, the \textit{orthogonality measure}, over $\Omega$.  We assume $\tau$ is a probability measure, $\int_{\Omega} \D \tau = 1$.  This is the measure which we shall subsequently use to construct an orthonormal basis of the approximation space.  Specifically, let $P \subset L^\infty(\Omega)$ be the \textit{approximation space} of dimension $\dim(P) = N < \infty$.  We write
\bes{
P = \spn \{ \phi_1,\ldots,\phi_N \},
}
where $\{ \phi_i \}^{N}_{i=1}$ is the corresponding orthonormal basis for $P$ in $L^2(\Omega,\tau)$.
Finally, we define \textit{sampling measures} $\mu_1,\ldots,\mu_M$ over $\supp(\tau)$, the support of the measure $\tau$.  These are also probability measures.  The $i^{\rth}$ such measure $\mu_i$ is the measure from which the $i^{\rth}$ sample will be drawn.  When later $\tau$ is taken as a discrete measure, this means that the sampling measures will also be discrete measures.  We also assume that there exists a function $w$ that is positive and defined everywhere on $\supp(\tau)$ and satisfies
\be{
\label{Measure}
\frac1M \sum^{M}_{i=1} \D \mu_i(\bm{y}) = \frac{1}{w(\bm{y})} \D \tau(\bm{y}),\qquad \forall \bm{y} \in \supp(\tau).
}
Note that this implies that $\int_{\Omega} w^{-1} \D \tau = 1$.  

We are now ready to define our approximation.  Let $M \geq N$ and draw $M$ points $\bm{y}_1,\ldots,\bm{y}_M$ independently, with $\bm{y}_i$ drawn according to the $i^{\rth}$ sampling measure $\mu_i$.  We then define the weighted least-squares approximation of a function $f \in L^{\infty}(\Omega)$ as
\be{
\label{GenLSprob}
\tilde{f} \in \argmin{p\in P} \left \{ \frac{1}{M}\sum^{M}_{i=1} w(\by_i)\left|f(\by_i)-p(\by_i) \right|^{2} \right \}.
}
Write $\tilde{f} = \sum^{N}_{i=1} c_i \phi_i$.  Then this is equivalent to the algebraic weighted least-squares problem
\bes{
\bm{c} = (c_i)^{N}_{i=1} \in  \argmin{\bx \in \CN}\nm{\bm{A}\bx-\bb}_{2}{},
}
where
\be{
\label{Adef}
\bm{A}=\brac{\frac{1}{\sqrt{M}}\sqrt{w(\by_i)}\phi_j(\by_i)}^{M,N}_{i,j = 1}\in\CMN,\qquad \bb=\brac{\frac{1}{\sqrt{M}}\sqrt{w(\by_i)}f(\by_i)}^{M}_{i=1}\in \CM .
}
For convenience, we now also define the discrete semi-inner product and semi-norm
\be{
\ip{f}{g}_{\Upsilon,w} = \frac{1}{M}\sum^{M}_{i=1} w(\by_i)f(\by_i)\overline{g(\by_i)},\qquad \nm{f}_{\Upsilon,w} = \sqrt{\ip{f}{f}_{\Upsilon,w} },
}
where $\Upsilon = \{ \bm{y}_1,\ldots,\bm{y}_{M}\}$.
Note that $\tilde{f}$ is can be expressed equivalently as
\be{
\tilde{f} \in \argmin{p \in P}\nm{f-p}_{\Upsilon,w}.
}
Given a domain $\Omega$, an error measure $\rho$ and an approximation space $P$, we are free to choose $\tau$ and the $\mu_i$.  This raises the following question: \textit{how should one choose the orthogonality measure $\tau$ and the sampling measures $\mu_1,\ldots,\mu_M$?}
There are two constraints to keep in mind.  First, we wish to take as few samples $M$ as possible.  Second, we need probability measures $\mu_i$ from  which it is not computationally intensive to draw samples.
We address the former determining how the error of the weighted least-squares approximation $\tilde{f}$ depends on these quantities, and in particular, how the number of samples $M$ influences the error bound.  For the second, as noted, we construct $\tau$ as a discrete measure over a suitable grid.

\subsection{Error and sample complexity estimates}

We first define the constant
\be{
\label{Cdef}
\cC := \sup \brac{ \frac{\nm{p}_{\Ltt}}{\nm{p}_{\Upsilon,w}} : p \in P,\ p|_{\supp(\tau)} \neq 0 }.
}
Notice that $\cC < \infty$ if and only if $\nm{\cdot}_{\Upsilon,w}$ is a norm on $P \subset L^2(\Omega,\tau)$, which in turn is a necessary and sufficient condition for the least-squares problem \R{GenLSprob} to have a unique solution.

This constant relates the orthogonality measure $\tau$ to the sampling measures $\mu_i$.  We also need a constant relating the error measure $\rho$ to $\tau$.  We define
\be{
\label{Ddef}
\cD := \sup \brac{\frac{\nm{p}_{\Ltr}}{\nm{p}_{L^2(\Omega,\tau)}} : p \in P,\ p|_{\supp(\rho)} \neq 0 }.
}
As above, notice that $\cD < \infty$ if and only if $\nm{\cdot}_{L^2(\Omega,\tau)}$ is a norm on $P \subset L^2(\Omega,\rho)$.

\begin{theorem}\label{Thm_errorbound}
Suppose that the constant $\cC$ defined in \R{Cdef} satisfies $\cC < \infty$ and let $f \in L^{\infty}(\Omega)$.  Then the approximation $\tilde{f}$ is unique and satisfies
\bes{
\nmu{f-\tilde{f}}_{\Ltt} \leq (1 + \cC) \inf_{\substack{p \in P}} \nm{f-p}_{\tau,w}.
}
If in addition the constant $\cD$ defined in \R{Ddef} satisfies $\cD < \infty$ then
\bes{
\nmu{f-\tilde{f}}_{\Ltr} \leq \inf_{p \in P} \left \{ \nm{f - p}_{\Ltr} + \cC \cD \nm{f - p}_{\tau,w} \right \},
}
where $\nm{g}_{\tau,w} = \sup_{\bm{y} \in \supp(\tau)} \sqrt{w(\bm{y})} | g(\bm{y}) |$.
\end{theorem}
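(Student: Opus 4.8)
The plan is to prove the two inequalities essentially in the order they are stated, using the classical least-squares "normal-equations" identity together with the two norm-equivalence constants $\cC$ and $\cD$. First I would record the basic variational characterization of $\tilde f$: since $\tilde f$ solves \R{GenLSprob}, for every $p\in P$ we have the discrete orthogonality relation $\ip{f-\tilde f}{q}_{\Upsilon,w}=0$ for all $q\in P$, and in particular $\nm{f-\tilde f}_{\Upsilon,w}\le\nm{f-p}_{\Upsilon,w}$ for all $p\in P$. Uniqueness of $\tilde f$ follows immediately from $\cC<\infty$: this is exactly the statement (already noted after \R{Cdef}) that $\nm{\cdot}_{\Upsilon,w}$ is a norm on $P$, so the least-squares functional is strictly convex on $P$.

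For the first bound, fix $p\in P$ and write $f-\tilde f=(f-p)+(p-\tilde f)$, where $p-\tilde f\in P$. Applying the triangle inequality in $\Ltt$ gives $\nm{f-\tilde f}_{\Ltt}\le\nm{f-p}_{\Ltt}+\nm{p-\tilde f}_{\Ltt}$. For the second term I would use the definition of $\cC$ in \R{Cdef} (valid since $p-\tilde f\in P$) to get $\nm{p-\tilde f}_{\Ltt}\le\cC\,\nm{p-\tilde f}_{\Upsilon,w}$, and then the discrete triangle inequality plus the minimality property $\nm{f-\tilde f}_{\Upsilon,w}\le\nm{f-p}_{\Upsilon,w}$ to bound $\nm{p-\tilde f}_{\Upsilon,w}\le\nm{p-f}_{\Upsilon,w}+\nm{f-\tilde f}_{\Upsilon,w}\le 2\nm{f-p}_{\Upsilon,w}$. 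This would give a constant $1+2\cC$ rather than the claimed $1+\cC$, so to get the sharp constant I instead need $\nm{f-p}_{\Ltt}\le\nm{f-p}_{\tau,w}$ (which holds since $\tau$ is a probability measure and $\nm{\cdot}_{\tau,w}$ is the weighted sup-norm, so $\nm{g}_{\Ltt}^2=\int|g|^2\D\tau\le\int w|g|^2\D\tau\cdot\sup(\ldots)$; more directly $\nm{g}_{\Ltt}\le\nm{g}_{\tau,w}$ follows from \R{Measure} giving $\int w^{-1}\D\tau=1$), and similarly I want to replace the discrete $\nm{f-p}_{\Upsilon,w}$ by $\nm{f-p}_{\tau,w}$ using that $w(\by_i)|g(\by_i)|^2\le\nm{g}_{\tau,w}^2$ pointwise on $\supp(\tau)$, hence $\nm{g}_{\Upsilon,w}\le\nm{g}_{\tau,w}$. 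Combining these and taking the infimum over $p\in P$ yields $\nm{f-\tilde f}_{\Ltt}\le(1+2\cC)\inf_p\nm{f-p}_{\tau,w}$; the factor $1+\cC$ presumably comes from a slightly more careful Pythagorean argument — namely, using the orthogonality $\ip{f-\tilde f}{p-\tilde f}_{\Upsilon,w}=0$ so that $\nm{p-\tilde f}_{\Upsilon,w}^2+\nm{f-\tilde f}_{\Upsilon,w}^2=\nm{f-p}_{\Upsilon,w}^2$, whence $\nm{p-\tilde f}_{\Upsilon,w}\le\nm{f-p}_{\Upsilon,w}$ directly — which is the step I would make sure to get right, since it is where the constant is tightened.

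For the second bound, I would start from $\nm{f-\tilde f}_{\Ltr}\le\nm{f-p}_{\Ltr}+\nm{p-\tilde f}_{\Ltr}$ and apply $\cD$ from \R{Ddef} to the second term: $\nm{p-\tilde f}_{\Ltr}\le\cD\,\nm{p-\tilde f}_{L^2(\Omega,\tau)}=\cD\,\nm{p-\tilde f}_{\Ltt}$. Now I reuse the chain from the first part — $\nm{p-\tilde f}_{\Ltt}\le\cC\,\nm{p-\tilde f}_{\Upsilon,w}\le\cC\,\nm{f-p}_{\Upsilon,w}\le\cC\,\nm{f-p}_{\tau,w}$ — to get $\nm{p-\tilde f}_{\Ltr}\le\cC\cD\,\nm{f-p}_{\tau,w}$. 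Taking the infimum over $p\in P$ gives the claimed inequality. The one genuinely delicate point throughout is bookkeeping the support sets: $\cC$ and $\cD$ control norms only up to the behaviour of $p$ on $\supp(\tau)$ and $\supp(\rho)$ respectively, so I must check that the elements to which I apply them ($p-\tilde f$, and $f-p$ in the discrete norm) are handled consistently — in particular that the semi-norm $\nm{\cdot}_{\Upsilon,w}$ being an actual norm on $P$ (guaranteed by $\cC<\infty$) is what legitimizes all the cancellations. That support/degeneracy bookkeeping, rather than any inequality, is the main obstacle.
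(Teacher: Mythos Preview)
Your proposal is correct and follows essentially the same route as the paper: triangle inequality, normal-equations orthogonality to get $\nm{p-\tilde f}_{\Upsilon,w}\le\nm{f-p}_{\Upsilon,w}$, then apply $\cC$ (and $\cD$ for the second bound), and finally pass from the discrete and $\Ltt$ norms to $\nm{\cdot}_{\tau,w}$ via $\int_\Omega w^{-1}\,\D\tau=1$ and the pointwise bound on $\supp(\tau)$. The only cosmetic difference is that the paper obtains $\nm{\tilde f-p}_{\Upsilon,w}\le\nm{f-p}_{\Upsilon,w}$ in one line by writing $\nm{\tilde f-p}_{\Upsilon,w}^2=\ip{f-p}{\tilde f-p}_{\Upsilon,w}$ and applying Cauchy--Schwarz, rather than via your Pythagorean identity; both are immediate consequences of the same orthogonality $\ip{f-\tilde f}{q}_{\Upsilon,w}=0$, so there is no substantive difference, and your ``presumably'' can be dropped --- the $1+\cC$ constant is exactly what this step gives.
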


See \S \ref{sec:proofs} for the proof.  This result states that firstly the approximation error in $L^2(\Omega,\tau)$ is determined by the constant $\cC$, which relates the $L^2$-norm with respect to $\tau$ to the discrete $L^2$-norm over the sample points, and the best approximation error measured in the $\nm{\cdot}_{\tau,w}$ norm, a weighted sup-norm over the support of $\tau$.  Secondly, the error in $L^2(\Omega,\rho)$ is determined by the same factors multiplied by the additional constant $\cD$, which relates the $L^2$-norms over $\rho$ and $\tau$.  

\rem{
\label{r:L2normerr}
{The above estimate, which bounds the $L^2$-norm error in terms of a weighted sup-norm, is not quite optimal.}  It is possible to obtain estimates (in expectation) involving solely $L^2$-norms by slightly modifying the least-squares estimator $\tilde{f}$.  For succinctness we shall not do this.  See, for instance, \cite{DavenportEtAlLeastSquares,MiglioratiAdaptive,MiglioratiIrregular}. 
}

We now move on to the question of optimal sampling.  As can be seen in the previous theorem, the samples influence the size of the constant $\cC$.  In the following theorem, we determine a sufficient condition on the sampling measures $\mu_i$ which guarantees that $\cC \lesssim 1$.

We now make the following standard assumption about the subspace $P \subset L^2(\Omega,\tau)$:
\be{
\label{Pass}
\mbox{For any $\bm{y} \in \supp(\tau)$ there exists a $p \in P$ with $p(\bm{y}) \neq 0$.}
}
Note that this implies that the function $\sum^{N}_{i=1} | \phi_i(\bm{y}) |^2 > 0$ on $\supp(\tau)$, for any orthonormal basis $\{ \phi_i \}^{N}_{i=1}$ of $P$ with respect to $\tau$.  In particular, the function 
\be{
\label{woptimalchoice}
w(\by)=\left( \frac{1}{N}\sum_{i=1}^{N}\left|\phi_i(\by)\right|^2\right)^{-1},\qquad \bm{y} \in \supp(\tau),
}
is positive and defined everywhere on $\mathrm{supp}(\tau)$.  Notice also that
\[ \int_{\Omega}w^{-1}(\by)d\tau(\by)
= \int_{\Omega}\frac{1}{N}\siN\left|\phi_i(\by)\right|^2 d\tau(\by)
= 1.
\]  
This function is also independent of the orthonormal basis used.  Indeed, $\sum_{i=1}^{N}\left|\phi_i(\by)\right|^2$ is {the reciprocal of the \textit{Christoffel function} \cite{NevaiFreud}} of the subspace $P \subset L^2(\Omega,\tau)$ .

\thm{
\label{Thm_optimalsampling}
Let {$ \gamma,\delta \in (0,1)$}, $P \subset L^2(\Omega,\tau)$ be such that \R{Pass} holds and $\{ \phi_i \}^{N}_{i=1}$ of $P$ with respect to the orthogonality measure $\tau$.  Let $w$ be as in \R{woptimalchoice} and suppose that $\mu_1,\ldots,\mu_M$ are probability measures satisfying \R{Measure} for this choice of $w$.  If
\be{
\label{MAW1}
M \geq N\log(N/\gamma) \left ((1-\delta )\log(1-\delta )+\delta \right )^{-1},
}
then, with probability at least {$1-\gamma$}, the constant $\cC$ defined in \R{Cdef} satisfies $\cC \leq \frac{1}{\sqrt{1-\delta}}$.  Moreover, if $M$ satisfies the slightly stricter condition
\be{
\label{MAW2}
M \geq N \log(2N/\gamma) \left ((1+\delta )\log(1+\delta )-\delta \right )^{-1}, 
}
then, with probability at least $1-\gamma$, the constant $\cC$ satisfies $\cC \leq \frac{1}{\sqrt{1-\delta}}$ and the condition number of the matrix $\bm{A}$ defined by \R{Adef} satisfies $\kappa(\bm{A}) \leq \sqrt{\frac{1+\delta}{1-\delta}}$.
}

Note that the condition {\R{MAW2} is stricter than the condition \R{MAW1}}.    We defer the proof of this theorem to \S \ref{sec:proofs}.

\subsection{Choice of sampling measure}

Theorem \ref{Thm_optimalsampling} implies that $M \gtrsim N \log(N)$ samples are sufficient for a small constant $\cC$, provided the sampling measures $\mu_i$ are such that
\be{
\label{meassatisfy}
\frac1M \sum^{M}_{i=1} \D \mu_i(\bm{y}) =  \frac{1}{N}\sum_{i=1}^{N}\left|\phi_i(\by)\right|^2 \D \tau(\bm{y}),\qquad \forall \bm{y} \in \supp(\tau).
}
This in turn implies that the least-square estimator $\tilde{f}$ is a quasi-best approximation over $L^2(\Omega,\tau)$ and, provided $\cD \lesssim 1$, also a quasi-best approximation over $L^2(\Omega,\rho)$.  We discuss the constant $\cD$ in the next section.

Before doing so, let us consider the case $\tau = \rho$, so that $\cD = 1$.  Then one choice of sampling measure that satisfies \R{meassatisfy} is simply $\mu_1 = \ldots = \mu_M = \mu$, where
\bes{
\D \mu(\bm{y}) = \frac{1}{N}\sum_{i=1}^{N}\left|\phi_i(\by)\right|^2 \D \rho(\bm{y}),\quad \bm{y} \in \Omega.
}
This is the optimal sampling measure introduced in \cite{MiglioratiCohenOptimal}.  
As noted, a disadvantage of this measure is that it is \textit{nonadaptive}.  If the $N$ increases, the measure $\mu$ changes, and one has to discard the existing samples (in practice, one can recycle at least some of these samples -- see \cite{ArrasEtAlAdaptive}).
An alternative approach, which avoids this problem, is the following.  First, fix $k \in \bbN$ and let $M = k N$.  Then, let
\bes{
\D \mu_i(\bm{y}) =  | \phi_j(\bm{y}) |^2 \D \rho(\bm{y}),\quad \bm{y} \in \Omega,\qquad (j-1) k < i \leq j k,\ j = 1,\ldots,N,
}
so that the first $k$ points are drawn from the measure $ | \phi_1 |^2 \D \rho$, the next $k$ points are drawn from the measure $| \phi_2 |^2 \D \rho$ and so forth.  
Observe that this choice of measures satisfies \R{meassatisfy}:
\bes{
\frac1M \D \mu_i(\bm{y}) = \frac1M \sum^{N}_{j=1} k |\phi_j(\bm{y}) |^2 \D \rho(\bm{y}) = \frac1N \sum^{N}_{j=1} |\phi_j(\bm{y}) |^2 \D \rho(\bm{y}).
}
This approach was introduced in \cite{MiglioratiAdaptive}.  It is clearly \textit{adaptive}, since if $N$ is incremented by one, we need only  sample an additional $k$ points from the new measure $| \phi_{N+1} |^2 \D \rho$.

\subsection{Discrete orthogonality measures}\label{ss:discorth}

Both of the above approaches require a known orthonormal basis for $P$ and the ability to sample from the corresponding measures in a computationally efficient manner.  Neither is typically the case when $\rho$ is a continuous measure on a general domain.
To avoid this issue, we now reintroduce the orthogonality measure $\tau$.  We construct this as a discrete measure based on a grid $Z = \{ \bm{z}_i \}^{K}_{i=1} \subset \Omega$, where the $\bm{z}_i$ are independently and identically drawn from the error measure $\rho$.  It is worth noting that sampling from $\rho$ may not be trivial in practice.  In our experiments, we use rejection sampling.  We shall not dwell on this issue any further, since it is domain (and therefore application) specific {(see also \S \ref{s:conclusion})}.  We also note that the use of a random grid here is simply to allow one to bound the constant $\cD$.  Deterministic grids are also permitted within this framework, although designing a good grid with $\cD \lesssim 1$ provably may be nontrivial.

Given such a grid $Z$, we define
\bes{
\D \tau(\bm{y}) = \frac{1}{K} \sum^{K}_{i=1} \delta(\bm{y}- \bm{z}_i).
}
We first describe the construction of the orthonormal basis $\{\phi_1,\ldots,\phi_N \}$ for $P \subset L^2(\Omega,\tau)$.  First, let $\{ \psi_1,\ldots,\psi_N \}$ be a basis for $P$ in $L^2(\Omega,\rho)$, and set
\bes{
\bm{B} = \left \{ \psi_j(\bm{z}_i) / \sqrt{K} \right \}^{K,N}_{i,j=1} \in \bbC^{K \times N}.
}
We assume henceforth that $\bm{B}$ is full rank, $\mathrm{rank}(\bm{B}) = N$.  Note that this is equivalent to the condition $\cD < \infty$.  Indeed,
\bes{
\bm{B} \bm{c} = 0 \quad \Leftrightarrow \quad \frac1K \sum^{K}_{i=1} | p(\bm{z}_i) |^2 = 0,\ p = \sum^{N}_{i=1} c_i \psi_i \quad \Leftrightarrow \quad \nm{p}_{L^2(\Omega,\tau)} = 0.
}
In Proposition \ref{p:Ksize} we give a sufficient condition on $K$ for this to occur.  Now let $\bm{B}$ have reduced QR decomposition $\bm{B} = \bm{Q} \bm{R}$, where $\bm{Q} = \{ q_{ij} \} \in \bbC^{K \times N}$ and $\bm{R} \in \bbC^{N \times N}$.  Then it follows straightforwardly that the functions $\phi_i$ are given by
\bes{
\phi_i(\bm{y}) = \sum^{i}_{j=1} (R^{-*})_{ij} \psi_j(\bm{y}),\quad i = 1,\ldots,N.
}

\subsection{Derivation of Methods 1 and 2}

We now complete the derivation of Methods 1 and 2.  For both methods, we first notice that the function $w(\bm{y})$ defined by \R{woptimalchoice} satisfies
\be{
\label{wMethods12}
\frac{1}{w(\bm{z}_i)} = \frac{K}{N} \sum^{N}_{j=1} | q_{ij} |^2,\quad i = 1,\ldots,K.
}
We next consider each method separately:

\subsubsection*{Method 1}
We let $\mu_1 = \ldots = \mu_M = \mu$, where
\bes{
\D \mu(\bm{y}) = \frac{1}{N}\sum_{i=1}^{N}\left|\phi_i(\by)\right|^2 \D \tau(\bm{y}) =\sum^{K}_{i=1} \frac{1}{K w(\bm{z}_i)} \delta(\bm{y} - \bm{z}_i) \D \bm{y},
}
and $w(\bm{y})$ is as in \R{woptimalchoice}.  Let $\pi = \{ \pi_i \}^{K}_{i=1}$ be the probability distribution on $\{1,\ldots,K \}$ with
\be{
\label{piiMeth1}
\pi_i = \frac{1}{K w(\bm{z}_i) } = \frac{1}{N} \sum^{N}_{j=1} | q_{ij} |^2,\qquad i = 1,\ldots,K.
}
Then, random sampling $\bm{y} \sim \mu$ is effected by randomly choosing an integer $i \in \{1,\ldots,K\}$ according to $\pi$ and then setting $\bm{y} = \bm{z}_i$.  Let $i_1,\ldots,i_M$ be $M$ integers drawn independently from $\{1,\ldots,K\}$ according to $\pi$ and $\bm{y}_1,\ldots,\bm{y}_M$ be the sample points.  Observe that 
\eas{
\bm{A} &= \left \{ \frac{1}{\sqrt{M}} \sqrt{w(\bm{y}_j)} \phi_k(\bm{y}_j)  \right \}^{M,N}_{j,k=1}= \left \{ \frac{q_{i_j,k}}{\sqrt{M \pi_{i_j}} }  \right \}^{M,N}_{j,k=1},
\\
\bm{b} &= \left \{ \frac{1}{\sqrt{M}} \sqrt{w(\bm{y}_i)} f(\bm{y}_j) \right \}^{M}_{j=1} = \left \{ \frac{f(\bm{z}_{i_j}) }{\sqrt{M K \pi_{i_j}}} \right \}^{M}_{j=1}.
}
This completes the derivation of Method 1.

\subsubsection*{Method 2}

In this case, we fix $M = k N$ for some $k \in \bbN$ and define, for $(l-1) k < i \leq l k$ and $l = 1,\ldots,N$, the sampling measures
\bes{
\D \mu_i(\bm{y}) = | \phi_l(\bm{y}) |^2 \D \tau(\bm{y}) = \frac{1}{K} \sum^{K}_{i=1} | \phi_l(\bm{z}_i) |^2 \delta(\bm{y}-\bm{z}_i) \D \bm{y}.
}
For each $l$, we define the probability distribution $\pi^{(l)} = \{\pi^{(l)}_i \}^{K}_{i=1}$ on $\{1,\ldots,K\}$ as
\bes{
\pi^{(l)}_{i} = \frac{1}{K} | \phi_l(\bm{z}_i) |^2 = | q_{il} |^2,\quad i = 1,\ldots,K.
}
Thus, drawing a sample from the $\mu_i$, $(l-1) k < i \leq l k$, is equivalent to $\bm{y} = \bm{z}_i$, where $i \sim \pi^{(l)}$.  Let $i_1,\ldots,i_M$ be the $M$ integers drawn according to the $\pi^{(l)}$.  Then we have
\eas{
\bm{A} &= \left \{ \frac{1}{\sqrt{M}} \sqrt{w(\bm{y}_j)} \phi_k(\bm{y}_j)  \right \}^{M,N}_{j,k=1} = \left \{ \frac{q_{i_j,k}}{\sqrt{\frac{M}{N} \sum^{N}_{l=1} \pi^{(l)}_{i_j} }} \right \}^{M,N}_{j,k=1},
\\
\bm{b} &= \left \{ \frac{1}{\sqrt{M}} \sqrt{w(\bm{y}_i)} f(\bm{y}_j) \right \}^{M}_{j=1} = \left \{ \frac{f(\bm{z}_{i_j})}{\sqrt{\frac{M K}{N} \sum^{N}_{l=1} \pi^{(l)}_{i_j} }} \right \}^{M}_{j=1}.
}
Up to the small modifications needed to make the method adaptive, this completes the derivation of Method 2.

\subsection{The size of the grid $Z$}\label{s:Zsize}

The size $K$ of $Z$ influences the magnitude of the constant $\cD$.  We now estimate this term.  For this, we use the following \textit{Nikolskii-type} inequality for the space $P \subset L^2(\Omega,\rho)$.  We let $\cN(P,\rho)$ be the smallest positive number such that 
\be{
\label{Nikrho}
\nm{p}_{L^{\infty}(\Omega)} \leq \cN(P,\rho) \nm{p}_{\Ltr}, \qquad \forall p \in P.
}

\prop{
\label{p:Ksize}
Let {$ \gamma,\delta \in (0,1)$} and $Z = \{ \bm{z}_i \}^{K}_{i=1}$ where the $\bm{z}_i$ are drawn independently and identically according to the measure $\rho$ on $\Omega$.  If 
\bes{
K \geq (\cN(P,\rho))^2 ((1-\delta )\log(1-\delta )+\delta )^{-1}\log(N/\gamma),
}
where $N=\dim(P)$, then with probability at least $1-\gamma$ the constant $\cD$ satisfies $\cD \leq \frac{1}{\sqrt{1-\delta}}$.
}

See \cite[Thm.\ 6.2]{adcock2018approximating}.  This reduces the question of how large to choose $K$ to that of determining the Nikolskii constant $\cN(P,\rho)$ for a measure $\rho$ over a domain $\Omega$.  As discussed in \cite{adcock2018approximating}, there are no generic results on this for arbitrary domains and measures.  However, in certain cases, one can show that $(\cN(P,\rho))^2$ is at most quadratic in $N$, the dimension of $P$:

\defn{
[$\lambda$-rectangle property]
A compact domain $\Omega$ has the $\lambda$-rectangle property for some $0<\lambda<1 $ if it can be written as a (possibly overlapping and uncountable) union $\Omega = \bigcup_{R \in \cR} R$ of hyperrectangles $R$ satisfying $\inf_{R \in \cR} \mathrm{Vol}(R) = \lambda \mathrm{Vol}(\Omega)$.
}

See \cite[Defn.\ 6.5]{adcock2018approximating}.  The following is \cite[Thm.\ 6.6]{adcock2018approximating}:

\prop{
\label{p:Klambdarect}
Suppose that $\Omega$ has the $\lambda$-rectangle property and let $P$ be the polynomial space $P = \spn \{ \bm{y} \mapsto \bm{y}^{\bm{n}} : \bm{n} \in \Lambda \}$,
where $\Lambda \subset \bbN^d_0$, $| \Lambda | = N$ is a lower set of multi-indices\footnote{That is, if $\bm{n} \in \Lambda$ and $\bm{n}' \leq \bm{n}$ then $\bm{n}' \in \Lambda$.}.  Let $\rho$ be the uniform probability measure on $\Omega$.  Then $(\cN(P,\rho))^2 \leq N^2 / \lambda$.
}

We remark in passing that most standard polynomial spaces correspond to lower sets, e.g.\ tensor product, total degree, hyperbolic cross, and so forth.

\rem{
Unfortunately, while many irregular domains have the the $\lambda$-rectangle property, some simple domains such a balls and simplicies do not \cite{adcock2018approximating}.
Various of results on the Nikolskii constant (or more generally, the Christoffel function) are known for certain irregular domains, although typically only for \textit{total degree} polynomial spaces, i.e.\ those for which $\Lambda = \Lambda_n = \{ \bm{n} = (n_1,\ldots,n_d) : n_1+\ldots + n_d \leq n \}$.  See, for example, \cite{XuOrthPoly} for results when $\Omega$ is a ball or simplex, \cite{PrymakChristoffel} for planar domains with piecewise smooth boundaries, \cite{KrooChristoffelStar} for convex and starlike domains and \cite{DaiNikolskii} when $\Omega$ is the surface of the sphere. 
It is an open problem to determine the Nikolskii constant for more general domains and subspaces $P$.
}

\section{Proofs of the main results}\label{sec:proofs}

We now prove the main results.  The proofs are based on similar ideas to those found in previous works on least-squares approximation.  See, for instance, \cite{adcock2018approximating,DavenportEtAlLeastSquares,MiglioratiCohenOptimal,MiglioratiAdaptive}.

\prf{[Proof of Theorem \ref{Thm_errorbound}]
Fix $p \in P$.  Then
\be{
\label{redphone}
\nmu{f-\tilde{f}}_{\Ltt} \leq  \nm{f-p}_{\Ltt}+ \nmu{\tilde{f}-p}_{\Ltt} .
}
We bound the first term using \R{Measure} and the fact that the $\mu_i$ are probability measures:
\eas{
\nm{f-p}_{\Ltt}^{2} = \frac1M \sum^{M}_{i=1} \int_{\Omega}|f(\by)-p(\by)|^{2}w(\by) \D \mu_i(\bm{y}) \leq \sup_{\by \in \supp(\tau)}w(\by)|f(\by)-p(\by)|^2 .
}
Hence $\nm{f-p}_{\Ltt}\leq \nmt{f-p}$.  For the second term, we first observe that $\ip{\tilde{f}}{p}_{\Upsilon,w} = \ip{f}{p}_{\Upsilon,w}$, $\forall p \in P$,
since $\tilde{f}$ is a discrete least-squares approximation, and therefore satisfies the normal equations.  In particular,
\bes{
\nmu{\tilde{f} - p}^2_{\Upsilon,w} = \ip{f - p}{\tilde{f} - p}_{\Upsilon,w} \leq \nm{f - p}_{\Upsilon,w} \nmu{\tilde{f} - p}_{\Upsilon,w},
}
and therefore $\nmu{\tilde{f} - p}_{\Upsilon,w} \leq \nm{f - p}_{\Upsilon,w}$.  Hence, by the definition of $\cC$,
\bes{
\nmu{\tilde{f} - p}_{\Ltt} \leq \cC \nmu{\tilde{f} - p}_{\Upsilon,w} \leq \cC \nmu{f - p}_{\Upsilon,w}.
}
Furthermore, we have 
\bes{
\nmu{f-p}_{\Upsilon,w}^2 = \frac{1}{M}\sum^{M}_{i=1} w(\by_i)|f(\by_i)-p(\by_i)|^2 
\leq \sup_{\substack{\by \in \supp(\tau)}} w(\by)|f(\by)-p(\by)|^2  
= \nmt{f-p}^2.
}
Combining this with the previous estimate gives $\nmu{\tilde{f} - p}_{\Ltt} \leq \cC \nmu{f - p}_{\tau,w}$.  Substituting this into \R{redphone} completes the proof of the first result.

We now consider the second result.  We have
\eas{
\nmu{f-\tilde{f}}_{\Ltr} \leq  \nmu{f-p}_{\Ltr}+\nmu{p-\tilde{f}}_{\Ltr} & \leq  \nm{f-p}_{\Ltr}+ \cD \nmu{p - \tilde{f}}_{\Ltt}
\\
& \leq \nm{f-p}_{\Ltr}+ \cC \cD \nmu{p - \tilde{f}}_{\Upsilon,w}.
}
Thus, using the earlier arguments, we deduce that
\bes{
\nmu{f-\tilde{f}}_{\Ltr} \leq \nmu{f-p}_{\Ltr} + \cC \cD \nmu{f - p}_{\tau,w},
}
as required.
}

We now prove Theorem \ref{Thm_optimalsampling}.  For this, we use the following \textit{weighted} Nikolskii-type inequality for the space $P \subset L^2(\Omega,\tau)$.  For the moment, consider an arbitrary positive function $w$ defined everywhere on $\Omega$, and let $\Nikw$ be the smallest positive number such that
\begin{equation}
\sup_{\bz \in \supp(\tau)}\sqrt{w(\bz)}|p(\bz)| \leq \Nikw \nm{p}_{\Ltt}, \qquad \forall p \in P. \label{inNik}
\end{equation}
Note that the earlier Nikolskii inequality \R{Nikrho} is a special case of this weighted inequality, corresponding to $\tau = \rho$ and $w \equiv 1$.  At this stage, it is also useful to note the relation between the Nikolskii constant and the Christoffel function of the subspace $P \subset L^2(\Omega,\tau)$.  Specifically, it is straightforward to show that
\be{
\label{Nikwequiv}
\Nikw = \sup_{\bm{y} \in \supp(\tau)} \sqrt{w(\bm{y}) \sum^{N}_{i=1} |\phi_i(\bm{y})|^{2}}.
}
{
Considering this expression, it becomes clear why $w(\bm{y})$ is taken in \R{woptimalchoice} as proportional to the Christoffel function, since this yields $\Nikw = \sqrt{N}$. The proof of Theorem \ref{Thm_optimalsampling} below relies on this observation.
}


\begin{theorem}\label{Thsam}
Let {$ \gamma,\delta \in (0,1)$} and $\mu_1,\ldots,\mu_M$ be probability measures satisfying \eqref{Measure} for some positive function $w$ defined almost everywhere on $\Omega$.  If 
\bes{
M \geq (\Nikw)^{2}((1-\delta )\log(1-\delta )+\delta )^{-1}\log(N/\gamma),
}
where $N=\dim(P)$, then with probability at least $1-\gamma$ the constant $\cC$ defined in \R{Cdef} satisfies $\cC \leq \frac{1}{\sqrt{1-\delta}}$.  Moreover, if $M$ satisfies the slightly stricter condition
\bes{
M \geq (\Nikw)^{2} ((1+\delta) \log(1+\delta)-\delta)^{-1} \log(2N/\gamma),
}
then, with probability at least $1-\gamma$, the constant $\cC$ satisfies $\cC \leq \frac{1}{\sqrt{1-\delta}}$ and the condition number of the matrix $\bm{A}$ defined by \R{Adef} satisfies $\kappa(\bm{A}) \leq \sqrt{\frac{1+\delta}{1-\delta}}$.
\end{theorem}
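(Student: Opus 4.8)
The plan is to reduce the bound on the constant $\cC$ from \R{Cdef} to a spectral bound on the matrix $\bA^*\bA$, with $\bA$ as in \R{Adef}, and then to obtain that spectral bound from a matrix Chernoff inequality. First I would record the identification $\cC = \sm{A}^{-1}$, with the convention $\sm{A}^{-1} = +\infty$ when $\bA$ is rank-deficient. Indeed, for $p = \sum_{j=1}^N c_j \phi_j \in P$, orthonormality of $\{\phi_i\}_{i=1}^N$ in $L^2(\Omega,\tau)$ gives $\nm{p}_{\Ltt}^2 = \nm{\bm{c}}_2^2$ by Parseval, while a direct computation from \R{Adef} gives $\nm{p}_{\Upsilon,w}^2 = \frac1M \sum_{i=1}^M w(\by_i) |p(\by_i)|^2 = \nm{\bA \bm{c}}_2^2$; since the constraint $p|_{\supp(\tau)} \neq 0$ in \R{Cdef} is exactly $\bm{c} \neq 0$, the supremum equals $\sm{A}^{-1}$. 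Consequently $\cC \leq (1-\delta)^{-1/2}$ is equivalent to $\lmi{\bA^*\bA} \geq 1-\delta$, which in particular forces $\bA^*\bA$ to be nonsingular, i.e.\ $\cC < \infty$ and \R{GenLSprob} uniquely solvable.

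Next I would set up the concentration. Write $\bA^*\bA = \frac1M \sum_{i=1}^M \bX_i$, where $\bX_i$ is the Hermitian positive semidefinite matrix of rank at most one with entries $(\bX_i)_{j,k} = w(\by_i)\, \overline{\phi_j(\by_i)}\, \phi_k(\by_i)$, so that $\lma{\bX_i} = \mathrm{tr}(\bX_i) = w(\by_i) \sum_{j=1}^N |\phi_j(\by_i)|^2$. Since $\by_i \in \supp(\mu_i) \subseteq \supp(\tau)$ almost surely, the Christoffel-function identity \R{Nikwequiv} yields the almost-sure bound $\lma{\bX_i} \leq (\Nikw)^2 =: R$. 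Moreover the $\by_i$ are independent with $\by_i \sim \mu_i$, so taking expectations entrywise, applying \R{Measure}, and using orthonormality of $\{\phi_i\}$ in $L^2(\Omega,\tau)$ gives $\sum_{i=1}^M \hop{\bX_i} = M \bI$; thus $\sum_{i=1}^M \bX_i$ has expectation whose least and largest eigenvalues both equal $M$.

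Now I would invoke the matrix Chernoff bound for a sum of independent (not necessarily identically distributed) Hermitian positive semidefinite matrices obeying a uniform bound $\lma{\bX_i} \leq R$. Its lower-tail half gives
\bes{
\mP{\lmi{\bA^*\bA} \leq 1-\delta} \leq N \exp\left( -\frac{M}{R} \big( (1-\delta)\log(1-\delta) + \delta \big) \right),
}
and since $(1-\delta)\log(1-\delta) + \delta > 0$ for $\delta \in (0,1)$, the hypothesis $M \geq (\Nikw)^2 ((1-\delta)\log(1-\delta)+\delta)^{-1} \log(N/\gamma)$ makes the right-hand side at most $\gamma$; on the complementary event $\lmi{\bA^*\bA} \geq 1-\delta$, hence $\cC \leq (1-\delta)^{-1/2}$ and $\tilde{f}$ is unique. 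For the stronger statement I would additionally use the upper-tail half, $\mP{\lma{\bA^*\bA} \geq 1+\delta} \leq N \exp(-\frac{M}{R}((1+\delta)\log(1+\delta)-\delta))$, allot failure probability $\gamma/2$ to each tail, and read off $\kappa(\bA) = \sqrt{\lma{\bA^*\bA}/\lmi{\bA^*\bA}} \leq \sqrt{(1+\delta)/(1-\delta)}$; the elementary inequality $(1+\delta)\log(1+\delta)-\delta \leq (1-\delta)\log(1-\delta)+\delta$ on $(0,1)$ (the difference vanishes at $0$ and has derivative $-\log(1-\delta^2) \geq 0$) shows the single stated hypothesis $M \geq (\Nikw)^2((1+\delta)\log(1+\delta)-\delta)^{-1}\log(2N/\gamma)$ dominates both per-tail requirements.

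There is no deep obstacle here: the content is concentrated in the choice of $w$ (already fixed above) and in the reduction of the first paragraph, after which the result is essentially an off-the-shelf matrix Chernoff bound. The places needing care are matching the exact constants emerging from that bound to the two stated sample-complexity thresholds; the elementary comparison used to merge the two tail conditions into one hypothesis in the second part; and verifying that the almost-sure bound $\lma{\bX_i} \leq R$ genuinely holds, which hinges on each $\mu_i$ being supported within $\supp(\tau)$ so that \R{Nikwequiv} applies at the random point $\by_i$.
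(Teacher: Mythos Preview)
Your proposal is correct and follows essentially the same route as the paper: identify $\cC = 1/\sqrt{\lambda_{\min}(\bA^*\bA)}$ via orthonormality of $\{\phi_i\}$ in $L^2(\Omega,\tau)$, decompose $\bA^*\bA$ as a sum of independent rank-one positive semidefinite matrices, bound each summand via the weighted Nikolskii constant, and apply the matrix Chernoff inequality to both tails, merging them via the elementary inequality $(1+\delta)\log(1+\delta)-\delta \leq (1-\delta)\log(1-\delta)+\delta$. The only cosmetic differences are that the paper absorbs the $1/M$ factor into $\bX_i$ (so $\mu_{\min}=1$, $R=(\Nikw)^2/M$) and bounds $\lambda_{\max}(\bX_i)$ by a quadratic-form argument using \R{inNik} rather than the rank-one trace identity with \R{Nikwequiv}; both yield the same constants.
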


To prove this result, we first recall the \textit{Matrix Chernoff} inequality (see \cite[Thm.\ 1.1]{TroppUserFriendly}):

\begin{theorem}{(Matrix Chernoff)}
Consider a finite sequence $\brac{\bX_{k}}$ of independent, random, self-adjoint matrices with dimension $d$. Assume that each random matrix satisfies
\[ \bX_{k} \succeq 0 \quad\text{and}\quad \lma{\bX_{k}}\leq R \quad\text{almost surely.}\]
Define 
\[ \mu_{\min}:= \lmi{\sum_{k}\hop{\bX_{k}}} \quad\text{and}\quad 
\mu_{\max}:=\lma{\sum_{k}\hop{\bX_{k}}}. \]
Then
\eas{
 \mP{\lmi{\sum_{k}\bX_{k}}\leq (1-\delta)\mu_{\min}} &\leq d \left[ \frac{e^{-\delta}}{(1-\delta)^{1-\delta}}\right]^{\mu_{\min}/R} ,\qquad \forall \delta \in [0,1],
\\
\mP{\lma{\sum_{k}\bX_{k}} \geq (1+\delta)\mu_{\max}} & \leq d \left[ \frac{e^{\delta}}{(1+\delta)^{1+\delta}}\right]^{\mu_{\max}/R},\qquad \forall \delta \geq 0.
}
\end{theorem}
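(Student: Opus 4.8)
The plan is to read Theorem \ref{Thsam} as a statement about the extreme eigenvalues of the Gram matrix $\bA^*\bA$ and to control them with the Matrix Chernoff inequality stated above. First I would reduce the two claims to eigenvalue estimates. Writing $p = \sum_{j=1}^N c_j \phi_j \in P$ and using that $\{\phi_i\}_{i=1}^N$ is orthonormal in $\Ltt$, one has $\nm{p}_{\Ltt}^2 = \nm{\bm{c}}_2^2$ and $\nm{p}_{\Upsilon,w}^2 = \nm{\bA\bm{c}}_2^2 = \bm{c}^*(\bA^*\bA)\bm{c}$, whence
\bes{
\cC^2 = \sup_{\bm{c}\neq \bm{0}} \frac{\nm{\bm{c}}_2^2}{\nm{\bA\bm{c}}_2^2} = \frac{1}{\lmi{\bA^*\bA}}, \qquad \kappa(\bA)^2 = \frac{\lma{\bA^*\bA}}{\lmi{\bA^*\bA}}.
}
Thus it suffices to prove $\lmi{\bA^*\bA}\geq 1-\delta$ for the first assertion, and additionally $\lma{\bA^*\bA}\leq 1+\delta$ for the condition-number assertion. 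Setting $\bm{\phi}(\by) = (\phi_1(\by),\ldots,\phi_N(\by))^\top$, I would write $\bA^*\bA$ as a sum $\sum_{i=1}^M \bX_i$ of the independent, self-adjoint, rank-one summands $\bX_i = \frac{1}{M} w(\by_i)\bm{\phi}(\by_i)\bm{\phi}(\by_i)^*$ of dimension $N$ (up to complex conjugation of the entries, which does not change the spectrum); independence holds because the $\by_i$ are drawn independently.

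Next I would verify the three hypotheses of Matrix Chernoff. Positivity $\bX_i\succeq 0$ is immediate since $\bX_i$ is a nonnegative multiple of an outer product. Its single nonzero eigenvalue equals its trace, $\lma{\bX_i} = \frac{1}{M} w(\by_i)\sum_{j=1}^N|\phi_j(\by_i)|^2$, and by the Christoffel-function identity \R{Nikwequiv} this is bounded almost surely by $R := (\Nikw)^2/M$, because each $\by_i$ lies in $\supp(\tau)$. The crucial step is the mean computation: although the $\mu_i$ differ, so that no individual $\hop{\bX_i}$ equals $\frac{1}{M}\bI$, their sum telescopes to the identity. Indeed, pulling $w(\by)$ out front and averaging the measures,
\bes{
\sum_{i=1}^M \hop{\bX_i} = \int_\Omega w(\by)\,\bm{\phi}(\by)\bm{\phi}(\by)^* \left( \frac{1}{M} \sum_{i=1}^M \D\mu_i(\by) \right) = \int_\Omega \bm{\phi}(\by)\bm{\phi}(\by)^*\, \D\tau(\by) = \bI,
}
where the middle equality uses the averaging condition \R{Measure} to replace $\frac{1}{M}\sum_i \D\mu_i$ by $w^{-1}\D\tau$, and the last uses orthonormality of $\{\phi_i\}$ in $\Ltt$. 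Hence $\mu_{\min} = \mu_{\max} = 1$.

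With $\mu_{\min}=\mu_{\max}=1$, $R = (\Nikw)^2/M$ and dimension $N$, the lower Matrix Chernoff tail gives $\mP{\lmi{\bA^*\bA}\leq 1-\delta} \leq N[e^{-\delta}(1-\delta)^{-(1-\delta)}]^{M/(\Nikw)^2}$. Taking logarithms and using $\log[e^{-\delta}(1-\delta)^{-(1-\delta)}] = -[(1-\delta)\log(1-\delta)+\delta]$, this is at most $\gamma$ exactly when $M \geq (\Nikw)^2[(1-\delta)\log(1-\delta)+\delta]^{-1}\log(N/\gamma)$, the first hypothesis; this proves $\cC\leq 1/\sqrt{1-\delta}$ with probability at least $1-\gamma$. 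For the condition-number claim I would apply both tails at failure level $\gamma/2$ and union bound. The upper tail $\mP{\lma{\bA^*\bA}\geq 1+\delta} \leq N[e^{\delta}(1+\delta)^{-(1+\delta)}]^{M/(\Nikw)^2}$ is at most $\gamma/2$ precisely when $M\geq (\Nikw)^2[(1+\delta)\log(1+\delta)-\delta]^{-1}\log(2N/\gamma)$, the stated stricter condition. Since $(1+\delta)\log(1+\delta)-\delta \leq (1-\delta)\log(1-\delta)+\delta$ on $(0,1)$ (the difference vanishes at $\delta=0$ and has derivative $-\log(1-\delta^2)\geq 0$), this same value of $M$ also forces the lower tail below $\gamma/2$; union bounding, both eigenvalue bounds hold simultaneously with probability at least $1-\gamma$, giving $\kappa(\bA)\leq\sqrt{(1+\delta)/(1-\delta)}$.

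The main obstacle is the mean computation: because the samples come from distinct measures $\mu_i$, the expectation of no single summand is a scalar multiple of $\bI$, and it is only the averaging hypothesis \R{Measure} together with orthonormality that makes the total mean exactly $\bI$ and thereby supplies the single value $\mu_{\min}=\mu_{\max}=1$ needed to feed Chernoff cleanly. The remaining ingredients---the almost-sure eigenvalue bound from the Nikolskii/Christoffel identity \R{Nikwequiv} and the elementary algebra turning the two tail estimates into the stated lower bounds on $M$---are routine.
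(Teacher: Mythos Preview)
Your write-up does not address the stated theorem. The Matrix Chernoff inequality is not proved in the paper at all; it is quoted verbatim from \cite[Thm.\ 1.1]{TroppUserFriendly} as a black-box tool. What you have written is a proof of Theorem~\ref{Thsam}, which \emph{applies} Matrix Chernoff, not a proof of Matrix Chernoff itself.

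That said, if the intended target was Theorem~\ref{Thsam}, then your argument is correct and essentially identical to the paper's: the same decomposition $\bA^*\bA = \sum_i \bX_i$ into rank-one summands, the same mean computation $\sum_i \hop{\bX_i} = \bI$ via \R{Measure}, the same almost-sure bound $R = (\Nikw)^2/M$ via \R{Nikwequiv}, and the same union-bound treatment of the condition number using the inequality $(1+\delta)\log(1+\delta)-\delta \leq (1-\delta)\log(1-\delta)+\delta$. Your derivative justification for this last inequality is a small addition not spelled out in the paper, but otherwise the two proofs coincide.
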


\begin{proof}[Proof of Theorem \ref{Thsam}]
Let $\brac{\phi_{1},...,\phi_{N}}$ be an orthonormal basis of $P$ with resect to $\tau$, $p \in P$, $p \neq 0$ be arbitrary and write $p=\siN c_i \phi_i$, so that 
\[ 
\nm{p}^{2}_{\Ltt}=\int_{\Omega}\left|\siN c_i\phi_i(\by)\right|^{2}d\tau(\by) = \siN|c_i|^{2} =\nm{\bc}_{2}^{2},\qquad \bc = (c_i)_{i=1}^N.
\]
Notice that $\nm{p}^2_{\Upsilon,w} = \frac1M \sum^{M}_{i=1} w(\bm{y}_i) |p(\bm{y}_i) |^2 = \bc^{*}\bG \bc$,
where $\bG  = \bm{A}^* \bm{A} \in \CNN$ is the self-adjoint matrix with entries $G_{j,k} = \ip{\phi_j}{\phi_k}_{\Upsilon,w}$.
It follow that
\bes{
\cC = \sup \left \{ \frac{\nm{\bm{c}}}{\sqrt{\bm{c}^* \bm{G} \bm{c}}} : \bm{c} \in \bbC^N,\ \bm{c} \neq \bm{0} \right \} = \frac{1}{\sqrt{\lambda_{\min}(\bm{G})}},
}
where $\lmi{\bG}$ is the minimal eigenvalue of $\bG$.
Write
\[ 
\bG = \sum^{M}_{i=1} \bX_{i}, \qquad \bX_{i}=\left \{ \frac{1}{M}w(\by_{i})\phi_j(\by_{i})\overline{\phi_k(\by_{i})} \right \}^{N}_{j,k = 1}.
\]
By construction, these matrices are independent and non-negative definite.  Also, 
\begin{eqnarray*}
\left ( \hop{\bX_{i}} \right )_{j,k}
 =  \int_{\Omega}\phi_j(\by)\overline{\phi_k(\by)}w(\by)\frac{1}{M}d\mu_i(\by),
\end{eqnarray*}
which gives
\[ 
\left ( \sum_{i=1}^{M}\hop{\bX_{i}} \right )_{j,k}= \int_{\Omega}\phi_j(\by)\overline{\phi_k(\by)}w(\by)\frac{1}{M}\sum_{i=1}^{M}d\mu_i(\by)=
\int_{\Omega}\phi_j(\by)\overline{\phi_k(\by)}d\tau (\by) = \delta_{j,k}.	
\]
Hence $\sum^{M}_{i=1} \bbE(\bm{X}_i) = \bm{I}$ is the identity matrix.
Moreover, for any $\bc \in \CN$ we have 
\[
\bc^{*}\bX_{i}\bc = \frac{1}{M}\left|\sum^{N}_{j=1} c_j\sqrt{w(\by_{i})}\phi_j(\by_{i})\right|^{2} 
\leq 
\frac{(\Nikw)^{2}}{M} \nm{ \siN c_j \phi_j }_{\Ltt}^{2}
=\frac{(\Nikw)^{2}}{M}\nm{\bc}_{2}^{2}.
\]
Since these matrices are self adjoint and nonnegative definite, we deduce that
\bes{
\lambda_{\max}(\bm{X}_i) \leq \frac{(\Nikw)^{2}}{M}.
}
We now apply the Matrix Chernoff bound with $d = N$, $R=(\Nikw)^{2}/{M}$ and 
\[ 
\mu_{\min}=\lmi{\smM\hop{\bX_{m}}}=\lmi{\bI}=1,
\]
to get
\eas{ 
\mP{\cC \geq \frac{1}{\sqrt{1-\delta}}} = \mP{\lmi{\smM \bX_{m}}\leq (1-\delta)} &\leq N \left[ \frac{e^{-\delta}}{(1-\delta)^{1-\delta}} \right]^{1/R},
\\
& = N \exp \left( 
-\frac{(1-\delta)\log(1-\delta)+\delta}{M^{-1}(\Nikw)^{2}}
\right).
}
The condition on $M$ implies that $\mP{\cC \geq \frac{1}{\sqrt{1-\delta}}} \leq \gamma$, which gives the first result.

For the second result, we note that $\kappa(\bm{A} ) = \sqrt{\lambda_{\max}(\bG)/\lambda_{\min}(\bG)}$.  Hence, by the Matrix Chernoff bound with $d$, $N$ and $\mu_{\min}$ as above and $\mu_{\max} = 1$, we have
\eas{
\bbP \left ( \kappa(\bm{A}) \geq \sqrt{\frac{1+\delta}{1-\delta}} \right ) &\leq \bbP \left ( \lambda_{\min}(\bm{G}) \leq (1-\delta) \right )+ \bbP \left ( \lambda_{\max}(\bm{G}) \geq (1+\delta) \right )
\\
\leq &N \left ( \exp \left( 
-\frac{(1-\delta)\log(1-\delta)+\delta}{M^{-1}(\Nikw)^{2}}
\right) + \exp \left( 
-\frac{(1+\delta)\log(1+\delta)-\delta}{M^{-1}(\Nikw)^{2}}
\right) \right ).
}
Note that $(1+\delta) \log(1+\delta) - \delta \leq (1-\delta ) \log(1-\delta) + \delta$ for $0 < \delta < 1$.  Hence
\bes{
\bbP \left ( \kappa(\bm{A}) \geq  \sqrt{\frac{1+\delta}{1-\delta}} \right )  \leq 2 N \exp \left( -\frac{(1+\delta)\log(1+\delta)-\delta}{M^{-1}(\Nikw)^{2}}\right) \leq  \gamma,
}
where in the last step we use the condition on $M$.  This completes the proof.
\end{proof}

\begin{proof}[Proof of Theorem \ref{Thm_optimalsampling}]
The result follows from \R{Nikwequiv} the definition of $w$ \R{woptimalchoice}.  Indeed, we have $\cN(P,\tau,w) = \sqrt{N}$ for this choice of $w$.  Hence Theorem \ref{Thsam} gives the result.
\end{proof}

We conclude this section with the proofs of Theorems \ref{t:Method1} and \ref{t:Method2}:

\prf{[Proof of Theorem \ref{t:Method1}]
Theorem \ref{Thm_optimalsampling} and the condition on $M$ imply that $\cC \leq 1/\sqrt{1-\delta}$ and $\kappa(\bm{A}) \leq \sqrt{1+\delta} / \sqrt{1-\delta}$ with probability at least $1-\gamma / 2$, and Proposition \ref{p:Ksize} and the condition on $K$ imply that $\cD \leq 1/\sqrt{1-\delta}$ with probability at least $1-\gamma / 2$.  Hence $\cC \leq 1/\sqrt{1-\delta}$, $\cD \leq 1/\sqrt{1-\delta}$ and $\kappa(\bm{A}) \leq \sqrt{1+\delta} / \sqrt{1-\delta}$ with probability at least $1-\gamma$.  The condition on $\cD$ implies that $\bm{B}$ is full rank (see \S \ref{ss:discorth}).  Next, observe that \R{wMethods12} and \R{piiMeth1} give
\bes{
\nm{g}_{\tau,w} = \sup_{\bm{y} \in \supp(\tau)} \sqrt{w(\bm{y})} | g(\bm{y}) | = \max_{i=1,\ldots,K} \sqrt{w(\bm{z}_i)} | g(\bm{z}_i) | = \max_{i=1,\ldots,K} \left \{ \frac{|g(\bm{z}_i) |}{\sqrt{K \pi_i}} \right \} = \tnm{g}_{Z,\pi}.
}
The result now follows from Theorem \ref{Thm_errorbound}.
}

\prf{[Proof of Theorem \ref{t:Method2}]
For each $t$, let $\cC = \cC_t$ and $\cD = \cD_t$ be the corresponding constants and write $\bm{A} = \bm{A}_t$, $\bm{B} = \bm{B}_t$ for the matrices defined in Step 7 and Step 4 of Method 2 respectively.  Define the following events:
\eas{
E &: \cD_r \leq \frac{1}{\sqrt{1-\delta}},
\\
F_t &: \cC_t \leq \frac{1}{\sqrt{1-\delta}}\ \mbox{and}\ \kappa(\bm{A}_t) \leq \sqrt{\frac{1+\delta}{1-\delta}},\quad t = 1,\ldots,r,
\\
G &= E \cap F_1 \cap \ldots \cap F_r.
}
Suppose first that event $G$ occurs.  Notice that $\cD_1 \leq \cD_2 \leq \ldots \leq \cD_r \leq 1/\sqrt{1-\delta}$ since the $P_t$ are nested subspaces.  Hence $\bm{B}_t$ is full rank for every $t$, which gives (i).  Also, as in the proof of the previous theorem, the events $F_t$ imply (ii) and the events $E$ and $F_t$ imply (iii).

It remains to show $\bbP(G) \geq 1 - \gamma$.  By the union bound $\bbP(G^c) \leq \bbP(E^c) + \bbP(F^c_1) + \ldots + \bbP(F^c_r)$.
Proposition \ref{p:Ksize} and the condition on $K$ give that $\bbP(E^{c}) \leq \gamma/2$.  Moreover, since $M_t = k_t N_t$, Theorem \ref{Thm_optimalsampling} and the condition on $k_t$ give that $\bbP(F^c_t) \leq \gamma_t/2$.  Hence $\bbP(G^c) \leq \gamma/2 + \sum^{r}_{t=1} \gamma_t / 2 = \gamma$, as required.
}

\section{Numerical examples}\label{s:numexamp}
To conclude this paper, we demonstrate Methods 1 and 2 on several examples.  Throughout, we consider the approximation of smooth functions using polynomials.  In particular, we choose
\bes{
P = P^{\mathrm{HC}}_n = \spn \left\{ \bm{y} \mapsto \bm{y}^{\bm{n}} : \bm{n} \in \Lambda^{\mathrm{HC}}_n \right \},
}
where $\Lambda^{\mathrm{HC}}_n $ is the \textit{hyperbolic cross} index set of index $n$:
\bes{
\Lambda^{\mathrm{HC}}_n = \left \{ \bm{n} = (n_1,\ldots,n_d) \in \bbN^d_0 : \prod^{d}_{k=1} {(n_k+1)}\leq n+1 \right \}.
}
We take $\Omega$ to be a compact domain contained in the unit hypercube $[-1,1]^d$, and $\rho$ to be the uniform measure on $\Omega$.  Sampling from $\rho$ is performed by rejection sampling.  The initial basis $\{ \psi_1,\ldots,\psi_N \}$ for $P$ is constructed by taking the restrictions to $\Omega$ of the orthonormal Legendre polynomials on $[-1,1]^d$ with indices belonging to $\Lambda$.  This approach is based on \cite{adcock2018approximating}. {Note, however, that \cite{adcock2018approximating} does not seek to orthogonalize this basis, unlike the approach developed in this paper.}

In our experiments we first generate a grid of size $K = 20000$ points, and then compute the approximation for values $1 \leq N_1 < \ldots < N_r \leq 1000$.  {In the first series of experiments, we compute the relative approximation error
\be{
\label{Kgriderr}
E_{\tau}(f) = \frac{\nmu{f - \tilde{f}}_{L^2(\Omega,\tau)}}{\nmu{f}_{L^2(\Omega,\tau)}},
}
over this grid,} as well as the constant $\cC$. {Later, in Fig.\ \ref{f:Tgriderr}, we consider an error computed over a distinct grid of points.}  The {computed values of $E_{\tau}(f)$ and $\cC$} values are averaged over {$50$ trials}, as follows. For Method 1, for each $N$ we take {$50$} independent draws of the corresponding $M$ sample points $\{ \bm{y}_i \}^{M}_{i=1}$ and take the {mean} values of $\cC$ and the error.  For Method 2, we perform {$50$} independent experiments sweeping, as described in the method, over $N_1,\ldots,N_r$ and then take the {mean} values.

{To examine the benefit of the new sampling procedure,} we also consider uniform random sampling over the fine grid of $K$ points {(this type of sampling is similar to that used \cite{adcock2018approximating}, although not identical, since \cite{adcock2018approximating} considers random sampling with respect to the continuous uniform measure on $\Omega$)}.  This methods is referred to as `Uniform' in our experiments.  For functions, we use the following:
{
\eas{
f_1(\bm{y}) &= \exp \left ( - \sum^{d}_{i=1} y_i / d\right ),\qquad f_2(\bm{y}) = \frac{1}{\sum^{d}_{i=1} \sqrt{|y_i|}},
\\
f_3(\bm{y}) &= \prod^{d}_{i=1} \frac{d/4}{d/4 + (y_i + (-1)^{i+1} / (i+1) )^2},\qquad f_4(\bm{y}) = \frac{1}{y^2_1+y^2_2}.
}
}
Note that $f_3$ is known as the `Genz product peak' function.

\begin{figure}
\begin{center}
{\small
\begin{tabular}{ccc}
 \includegraphics[scale=0.38]{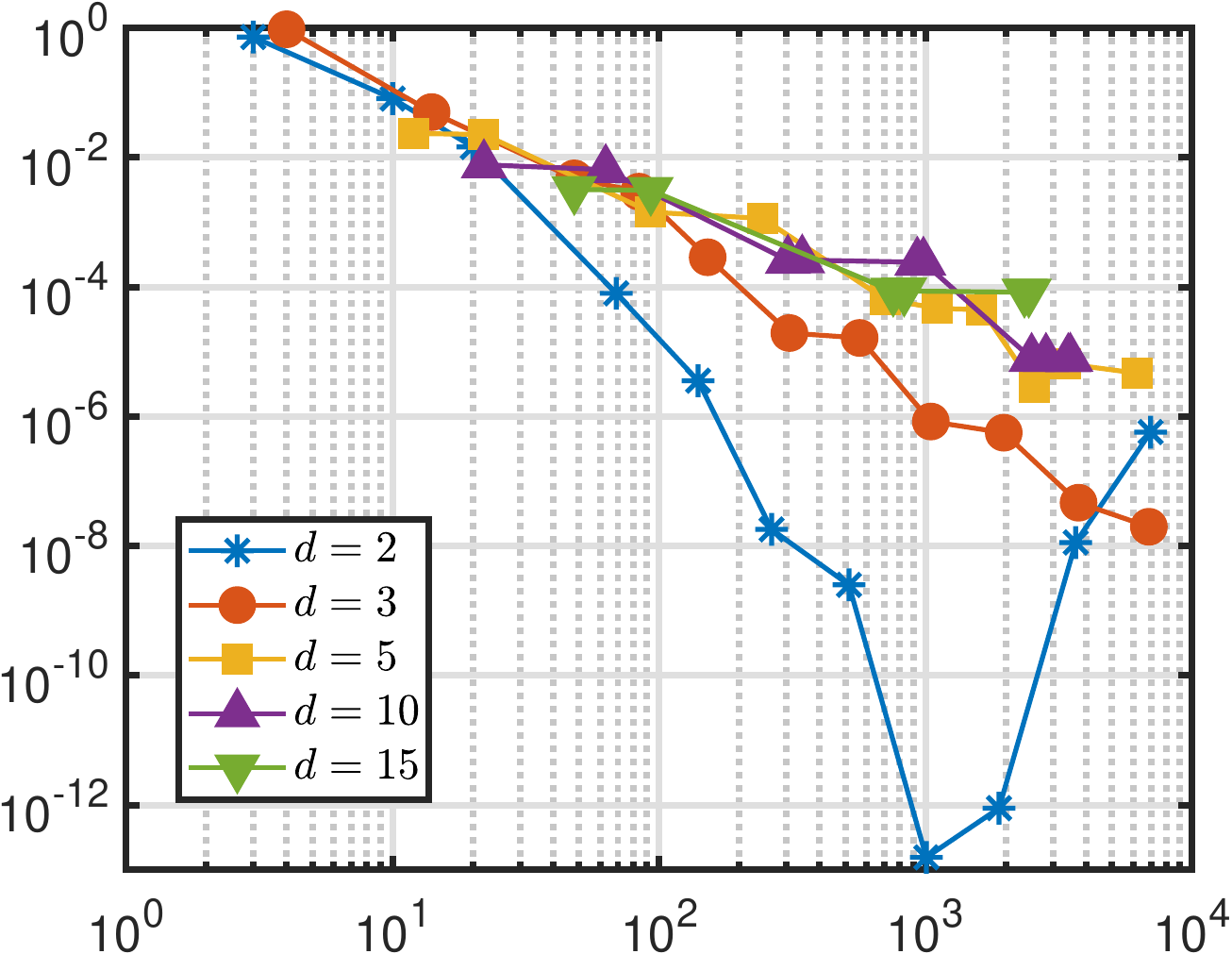} &
\includegraphics[scale=0.38]{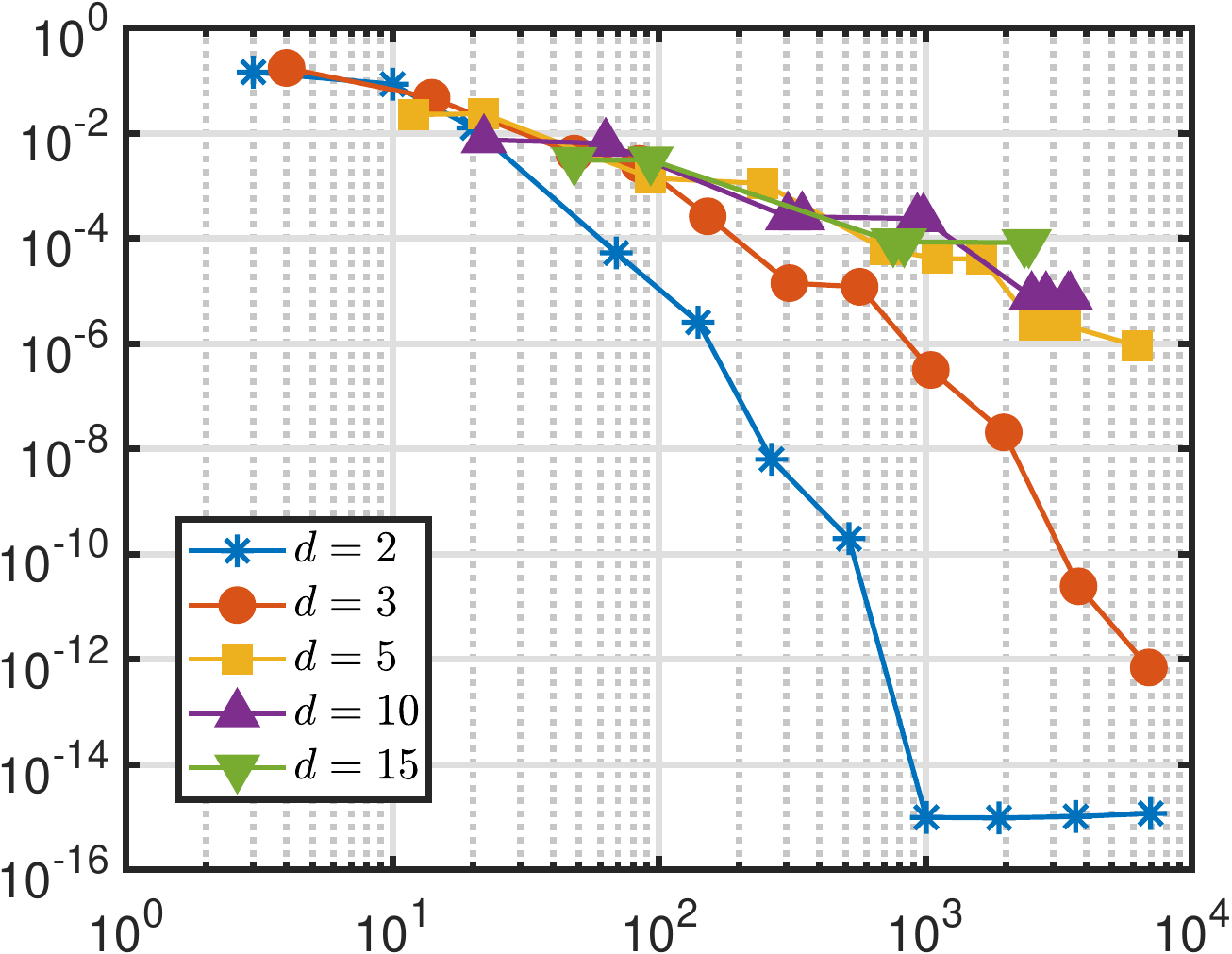} & 
\includegraphics[scale=0.38]{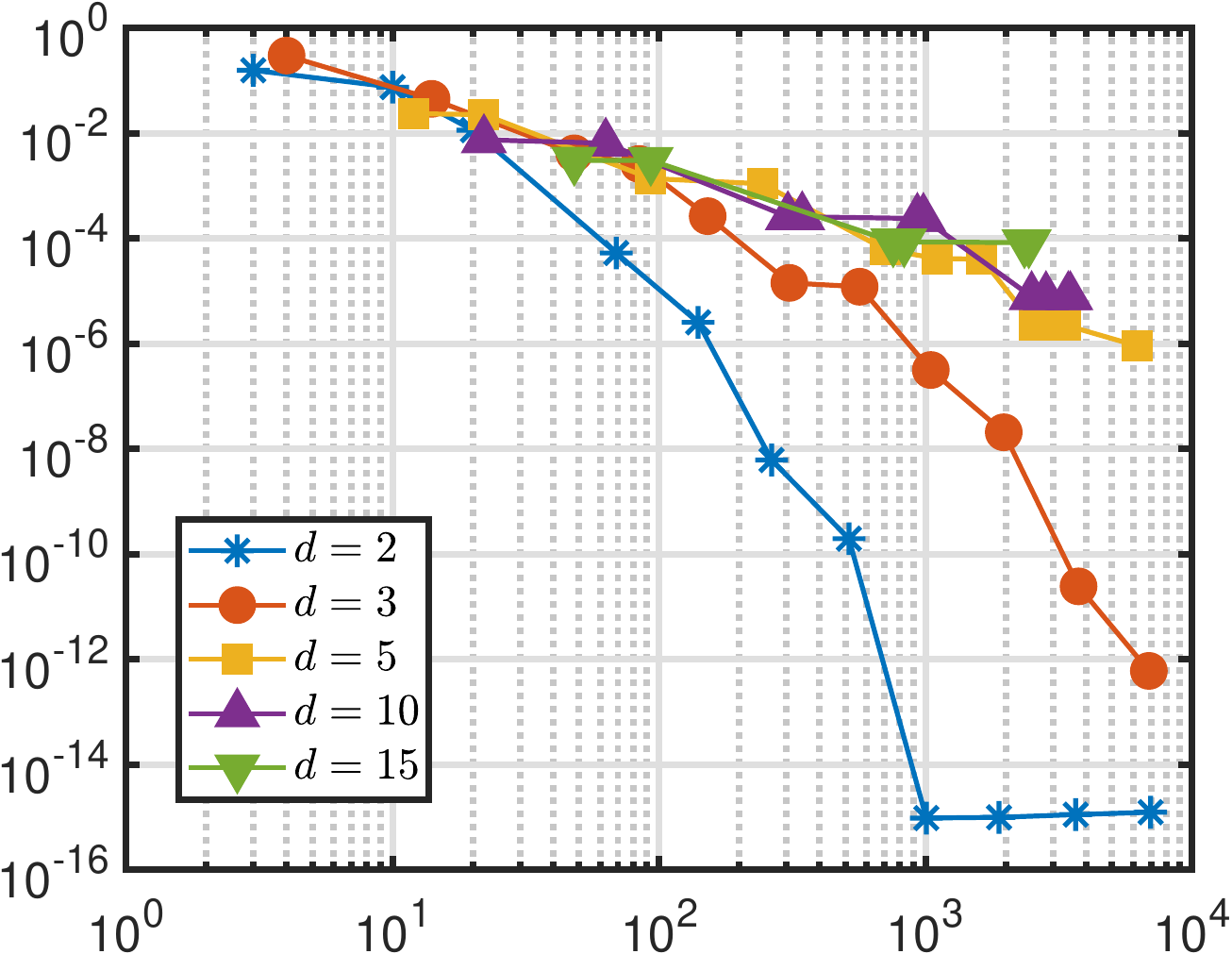}
\\  
 \includegraphics[scale=0.38]{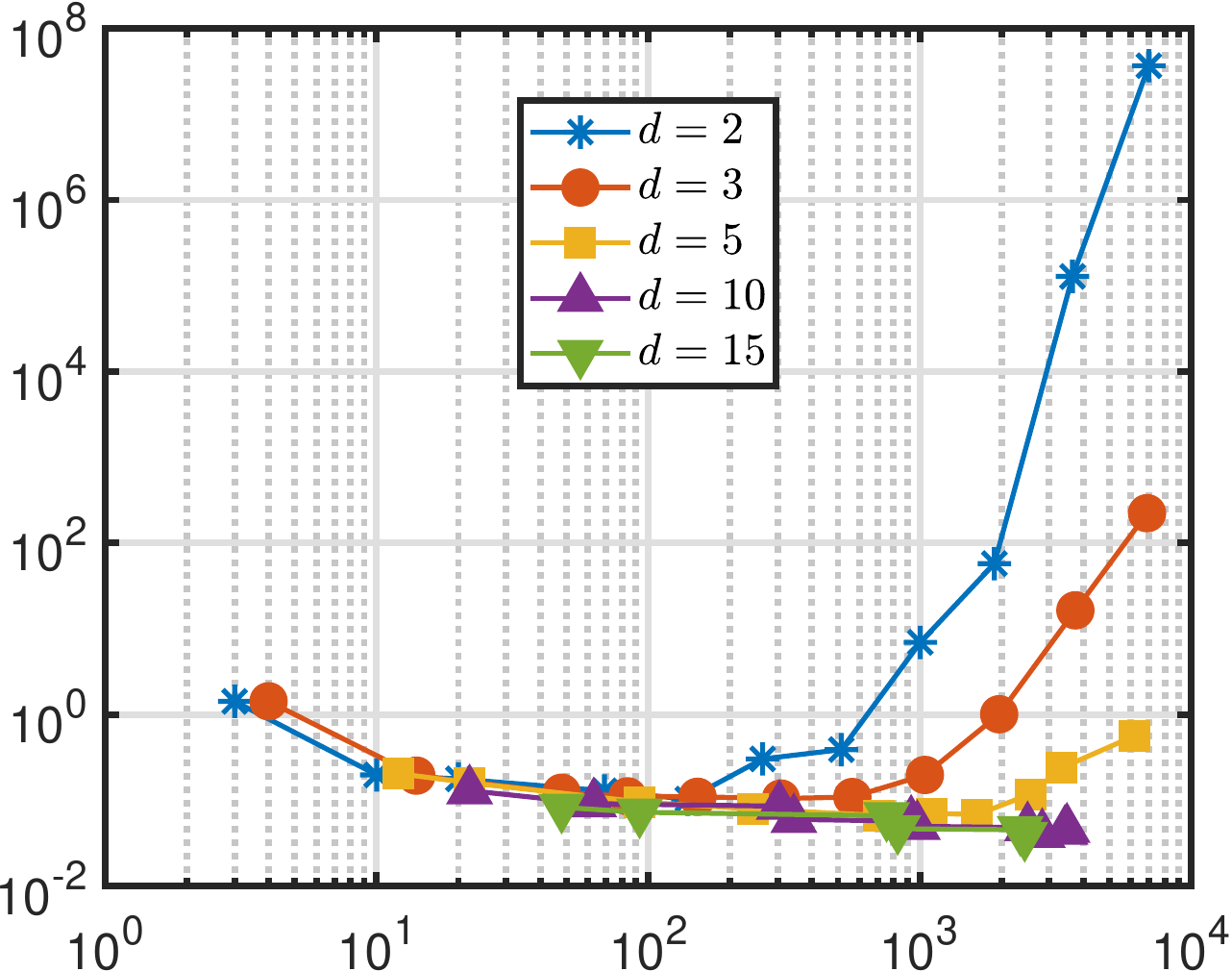} &
 \includegraphics[scale=0.38]{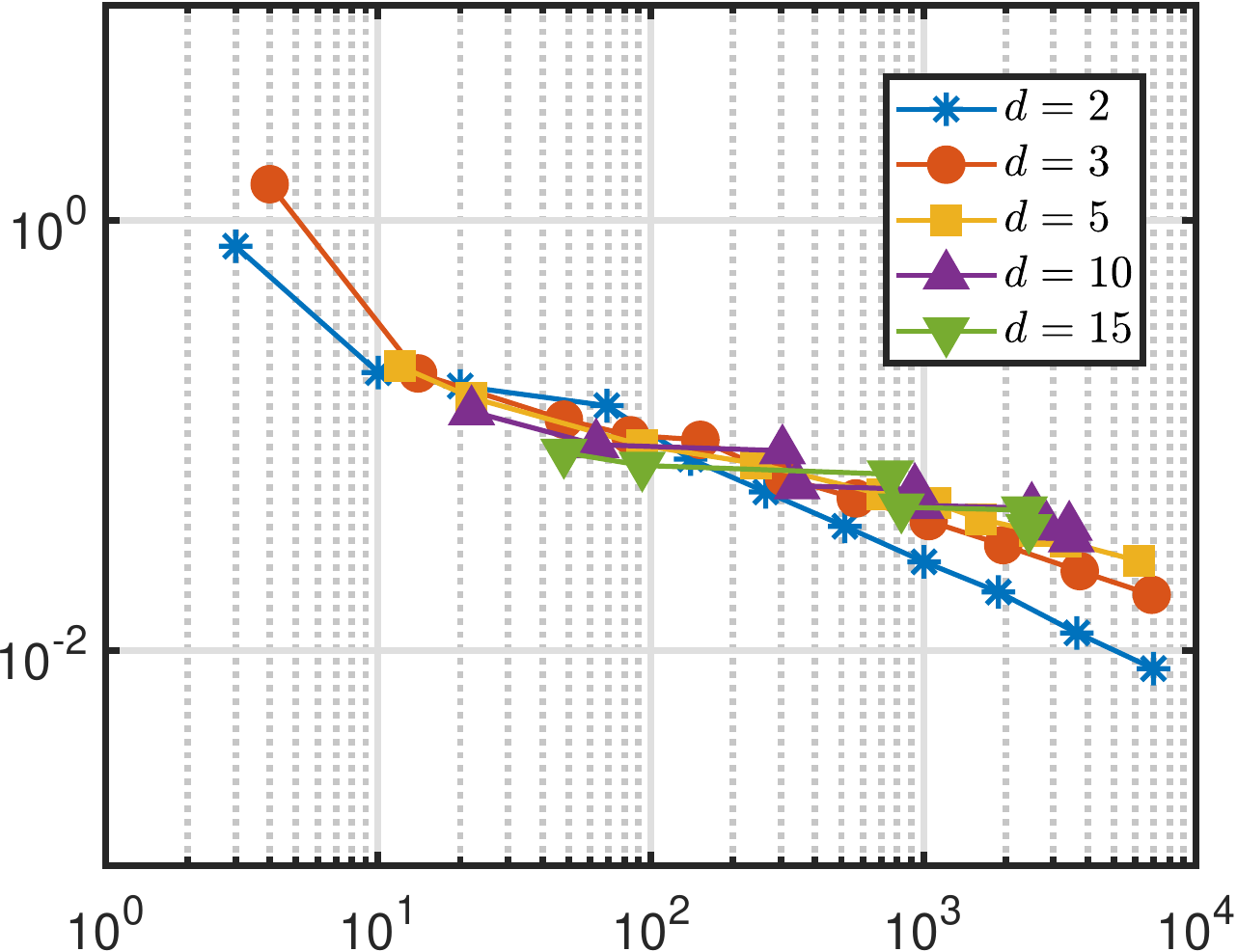} &
 \includegraphics[scale=0.38]{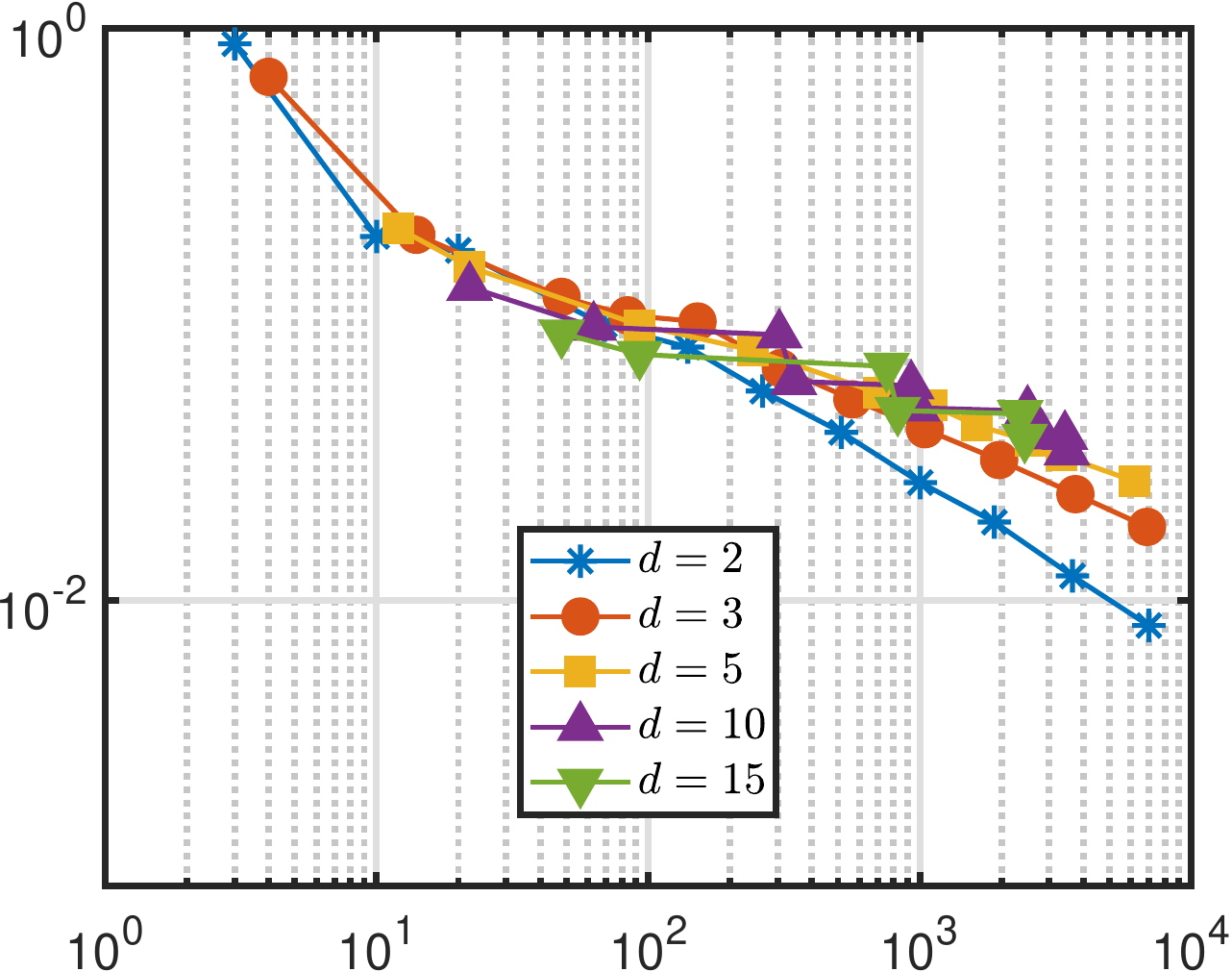}
\\
 \includegraphics[scale=0.38]{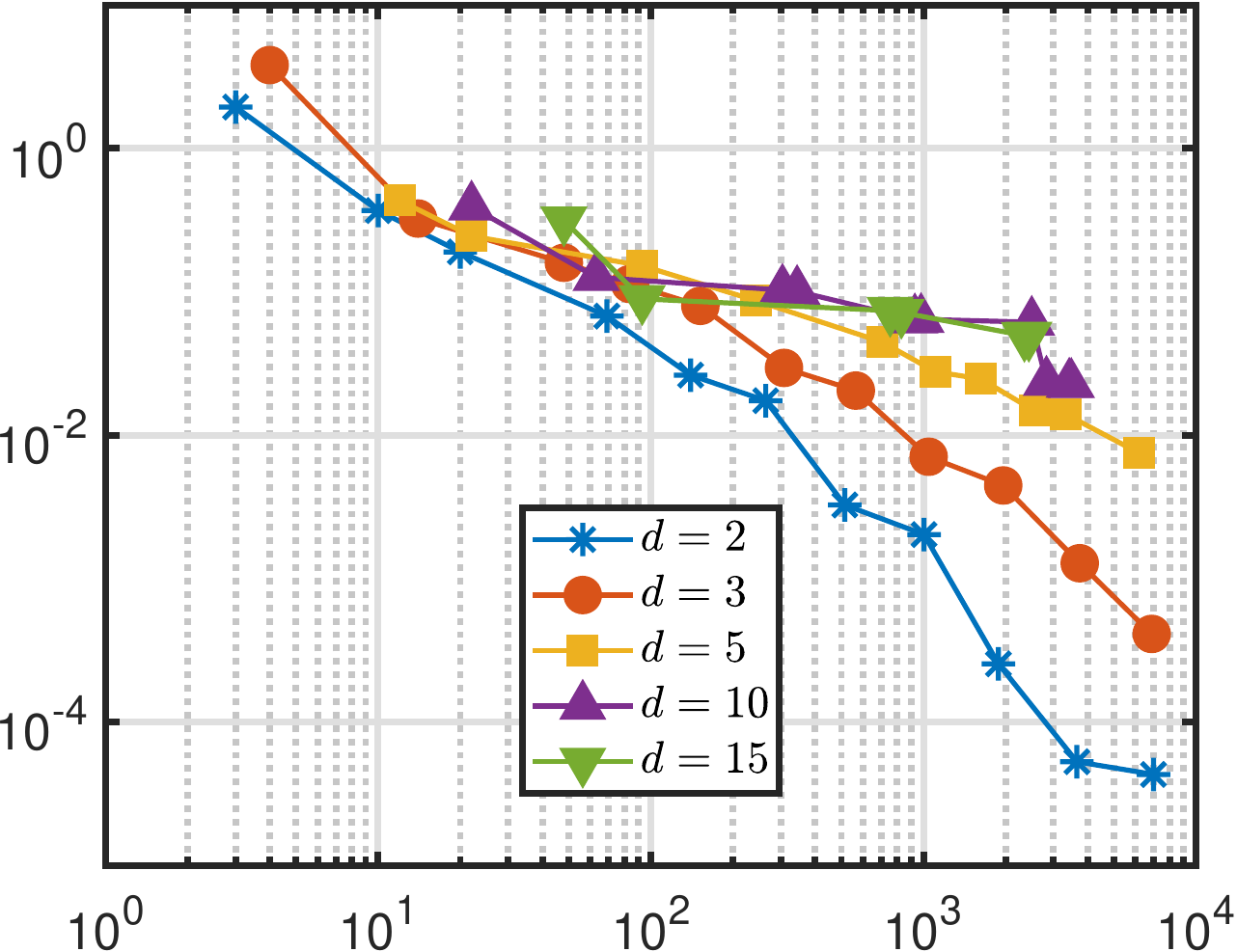} &
  \includegraphics[scale=0.38]{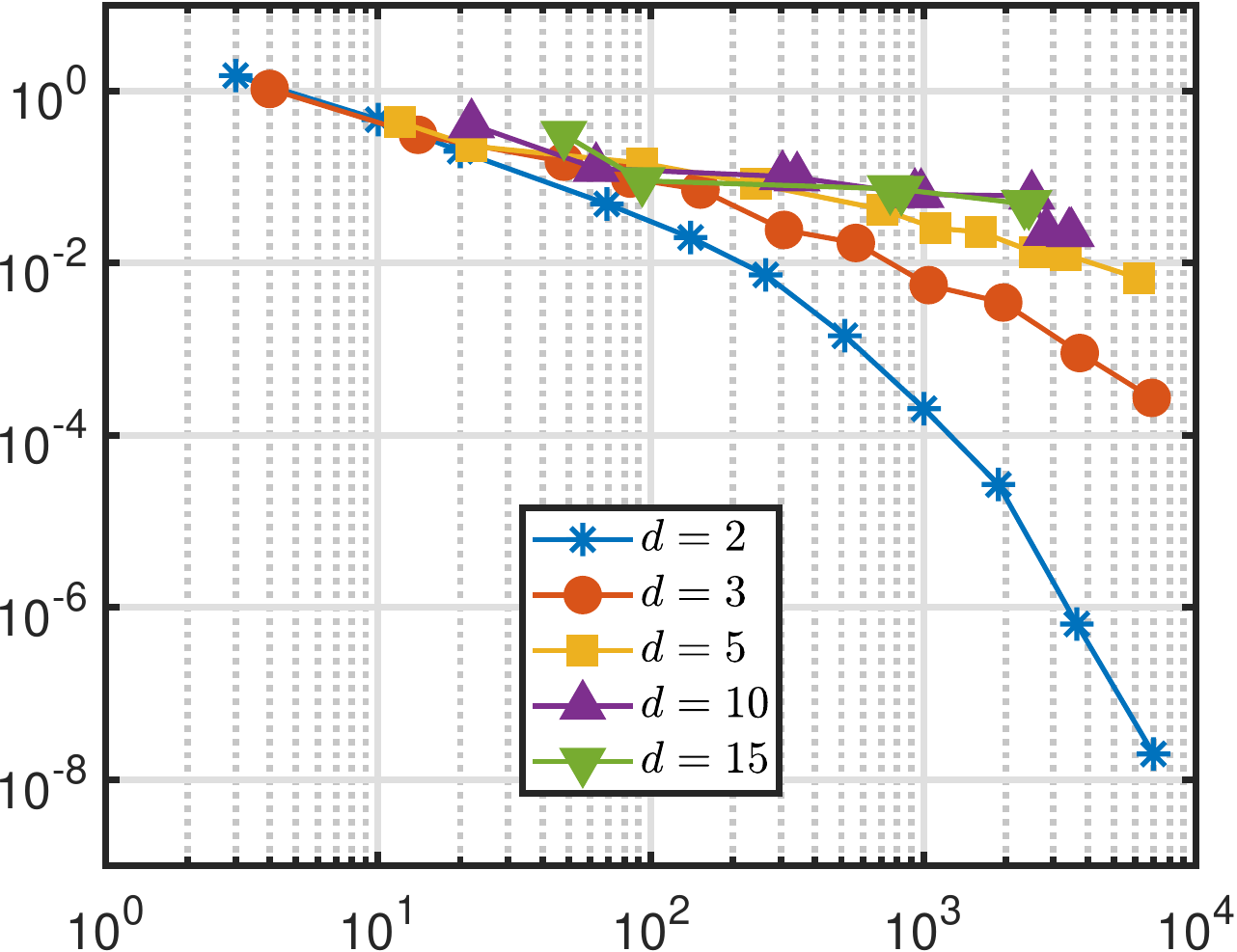} &
 \includegraphics[scale=0.38]{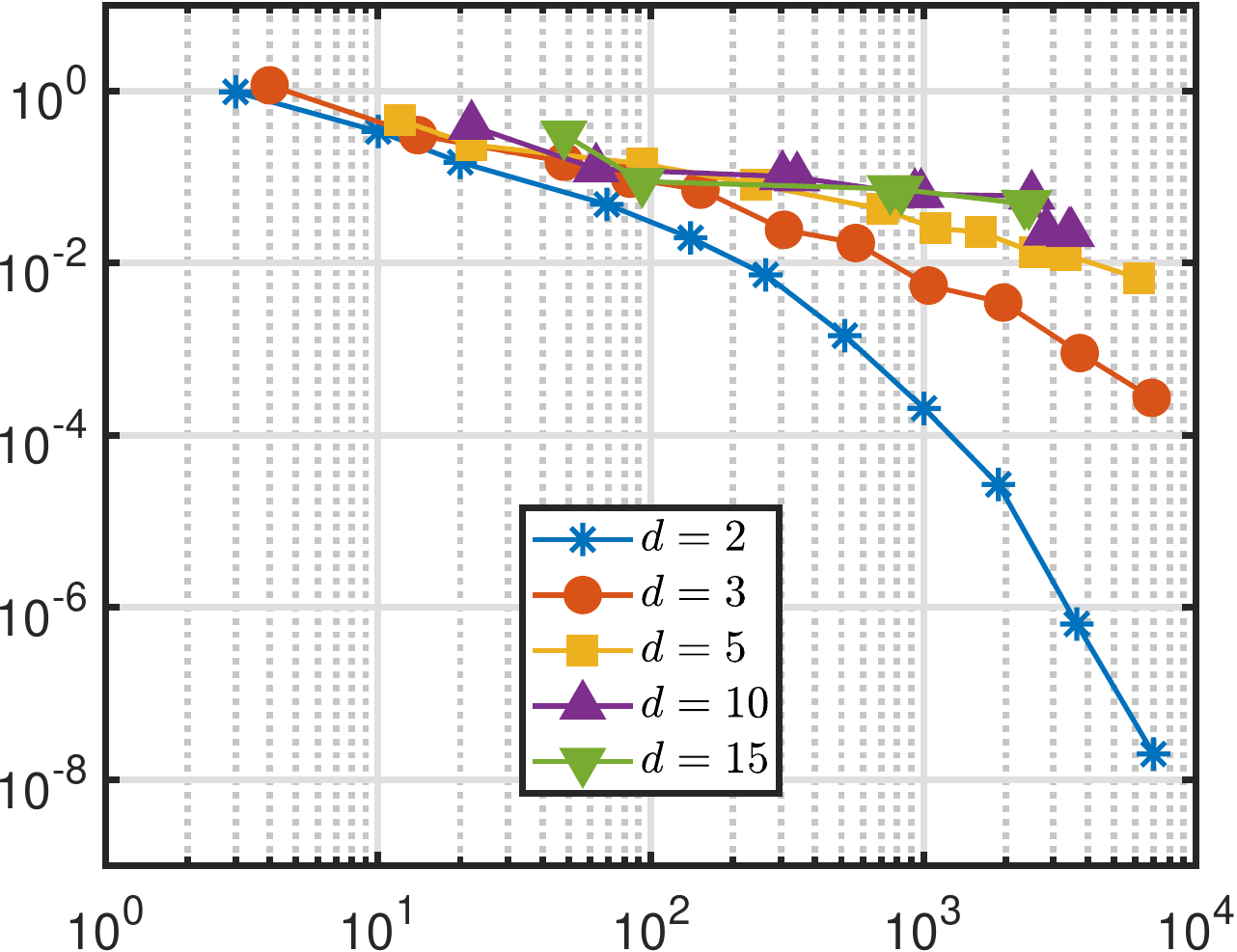}
\\  
 \includegraphics[scale=0.38]{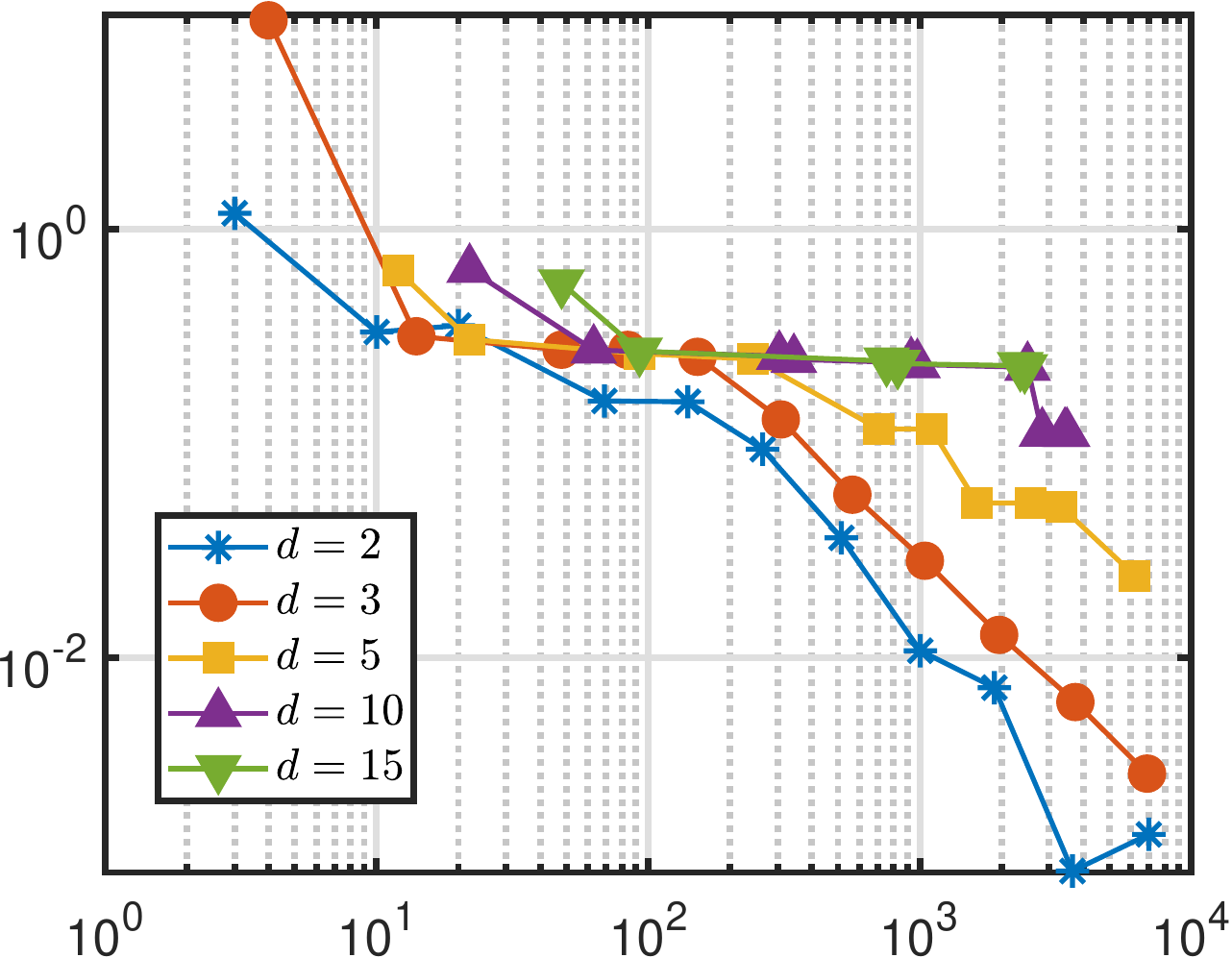}&
 \includegraphics[scale=0.38]{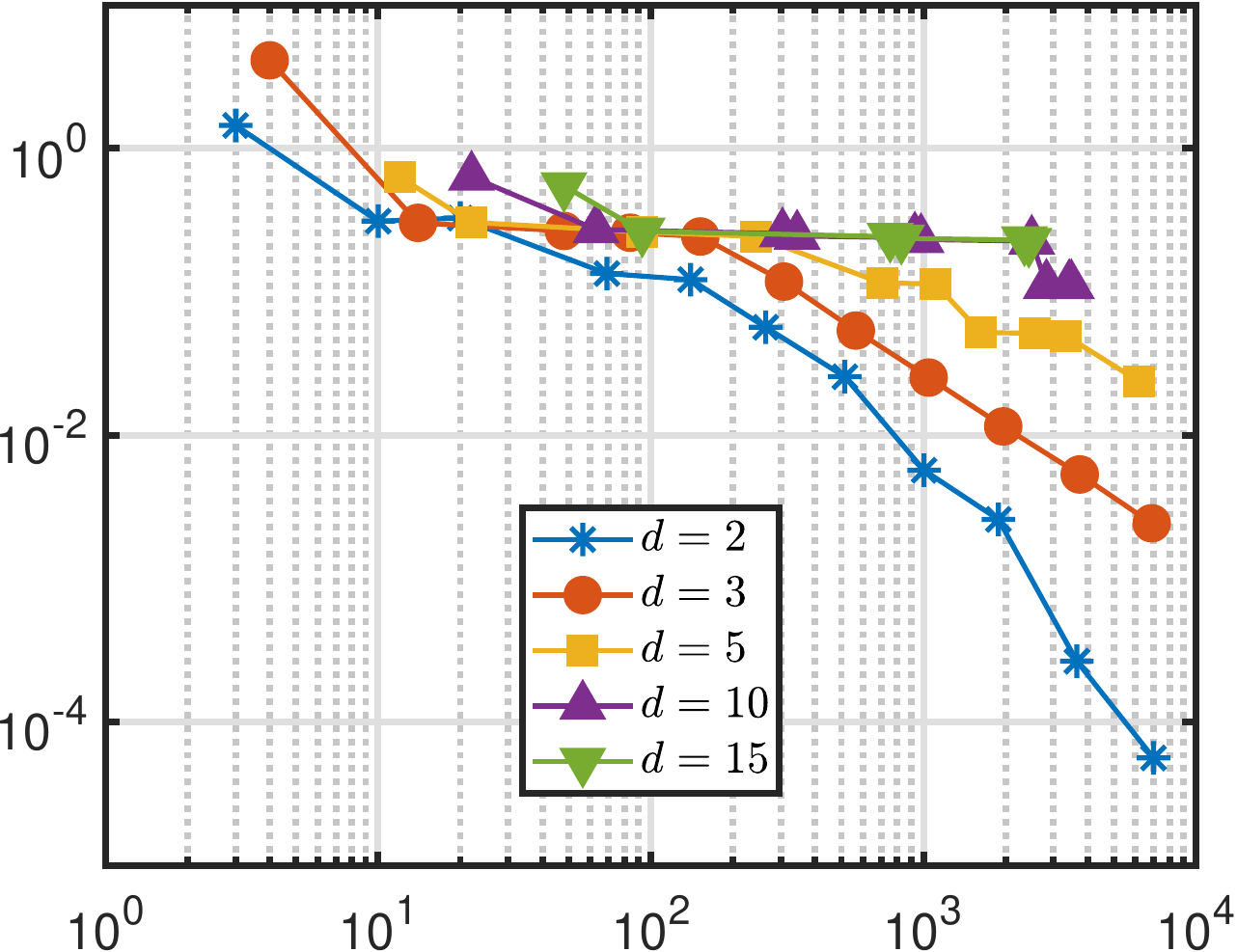}&
 \includegraphics[scale=0.38]{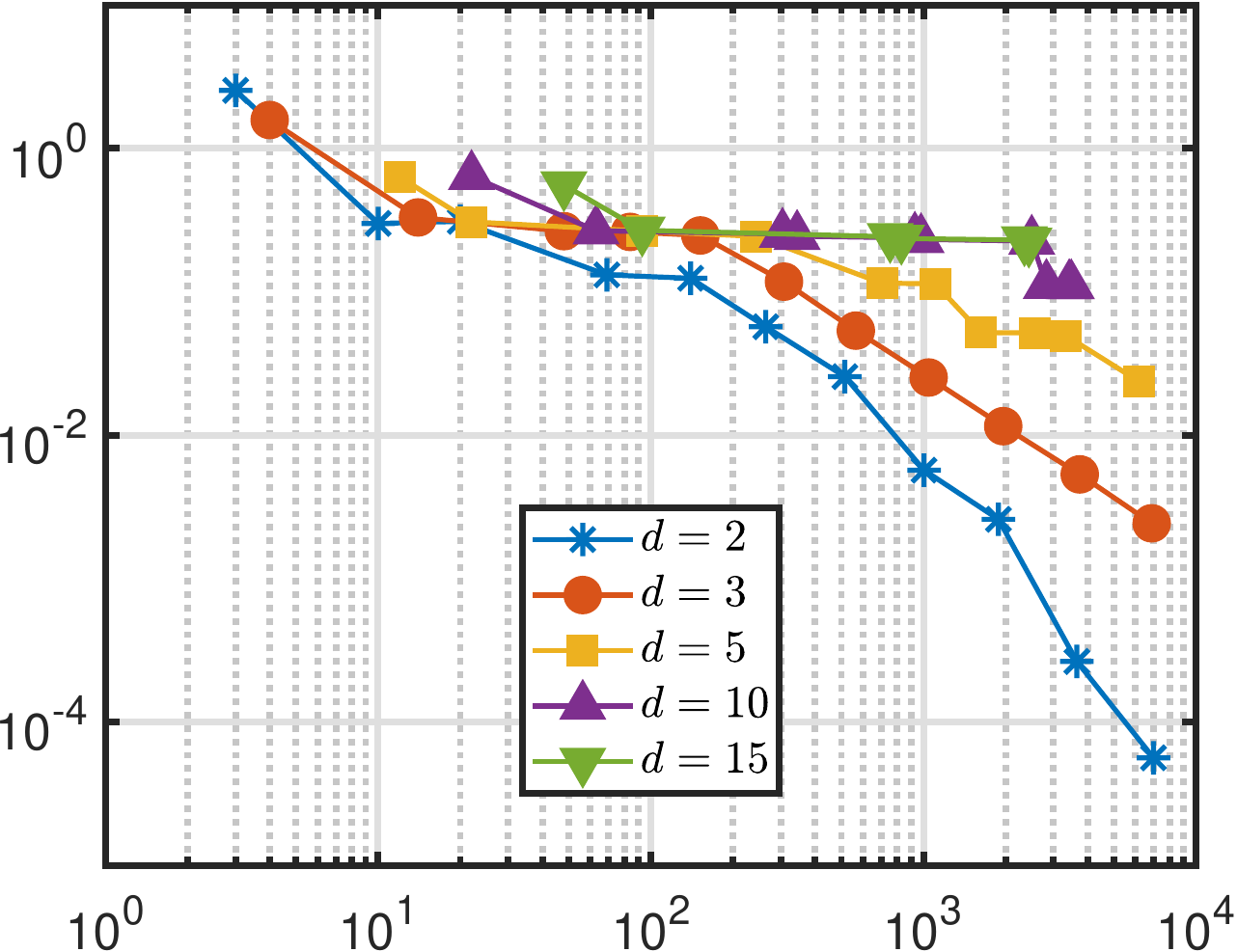}
\\  
Uniform & Method 1 & Method 2  
  \end{tabular}
  }
\end{center}
\caption{
{The error $E_{\tau}(f)$ versus $M$, with $M$ chosen as the smallest value such that $M \geq N \log(N)$.  First row: the domain $\Omega_1 = \left \{ \bm{y} : 1/4 \leq \nm{\bm{y}}_2 \leq 1 \right \}$ with $f = f_1$.  Second row: $\Omega_1$ and $f = f_2$. Third row: the domain $\Omega_2 = \left \{ \bm{y} \in (-1,1)^d : y_1 + \ldots + y_d \leq 1 \right \}$ with $f = f_3$.  Fourth row: the domain $\Omega_3 = \left \{ \bm{y} \in (-1,1)^d :   y^2_1+y^2_2\geq 1/4 \right \}$ with $f = f_4$}.  The domains are shown in Fig.\ \ref{f:Domains} for $d = 2$.  
}
\label{f:Message1}
\end{figure}

In Fig.\ \ref{f:Message1} we compare Method 1 and Method 2 with uniform random sampling (Uniform) over three domains in various different dimensions.  The domains (for $d = 2$) are shown in Fig.\ \ref{f:Domains}, along with the fine grid of $K$ points and the samples generated by Method 1 for a typical value of $M$.  In all cases, both procedures lead to an improvement over uniform sampling.  {The effect is most noticeable for lower dimensions; such an observation is not surprising, given that in higher dimensions the maximum degree $n$ of the hyperbolic cross index set $\Lambda^{\mathrm{HC}}_n$ is not as large as in lower dimensions (in our experiments, $N = |\Lambda^{\mathrm{HC}}_n| \leq 1000$}).  For Uniform the error actually increases with $N$ in some cases. This effect is most dramatic for $f_2$ which, unlike the other functions, is not infinitely smooth in $\Omega$.  The reason for this increase is because the number of samples $M \asymp N \log(N)$ in this experiment, which is asymptotically lower order than the quadratic scaling $N^2 \log(N)$ known to be sufficient (for certain domains, see \cite{adcock2018approximating}) when working with uniform random samples.

This experiment also demonstrates that Method 1 and Method 2 have similar performance in all cases.  Recall, however, that Method 1 recycles none of its samples when $N$ increases, whereas Method 2 recycles all its samples.

\begin{figure}
\begin{center}
{\small
\begin{tabular}{ccc}
\includegraphics[scale=0.29]{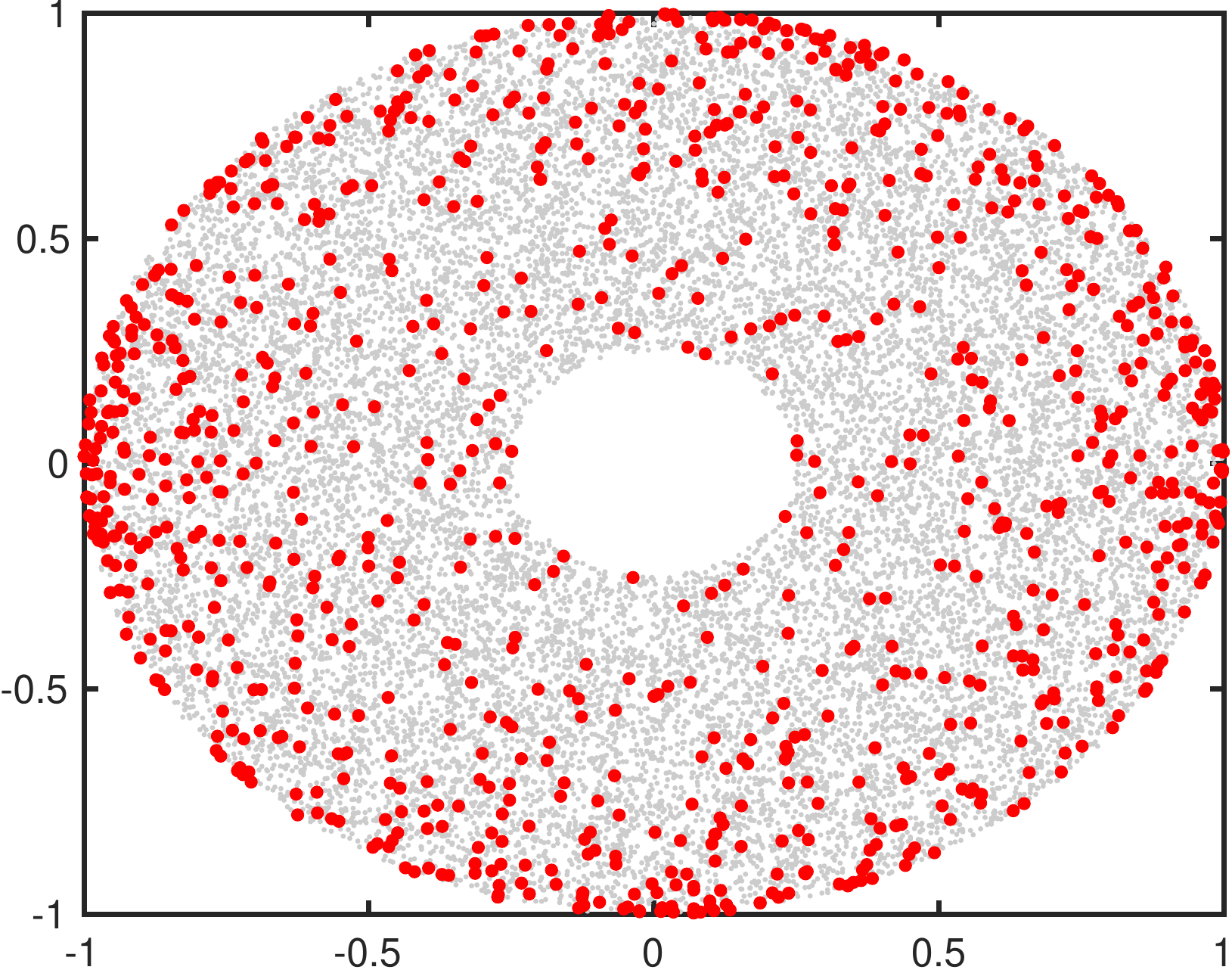} & \hspace{-0.3cm}
\includegraphics[scale=0.29]{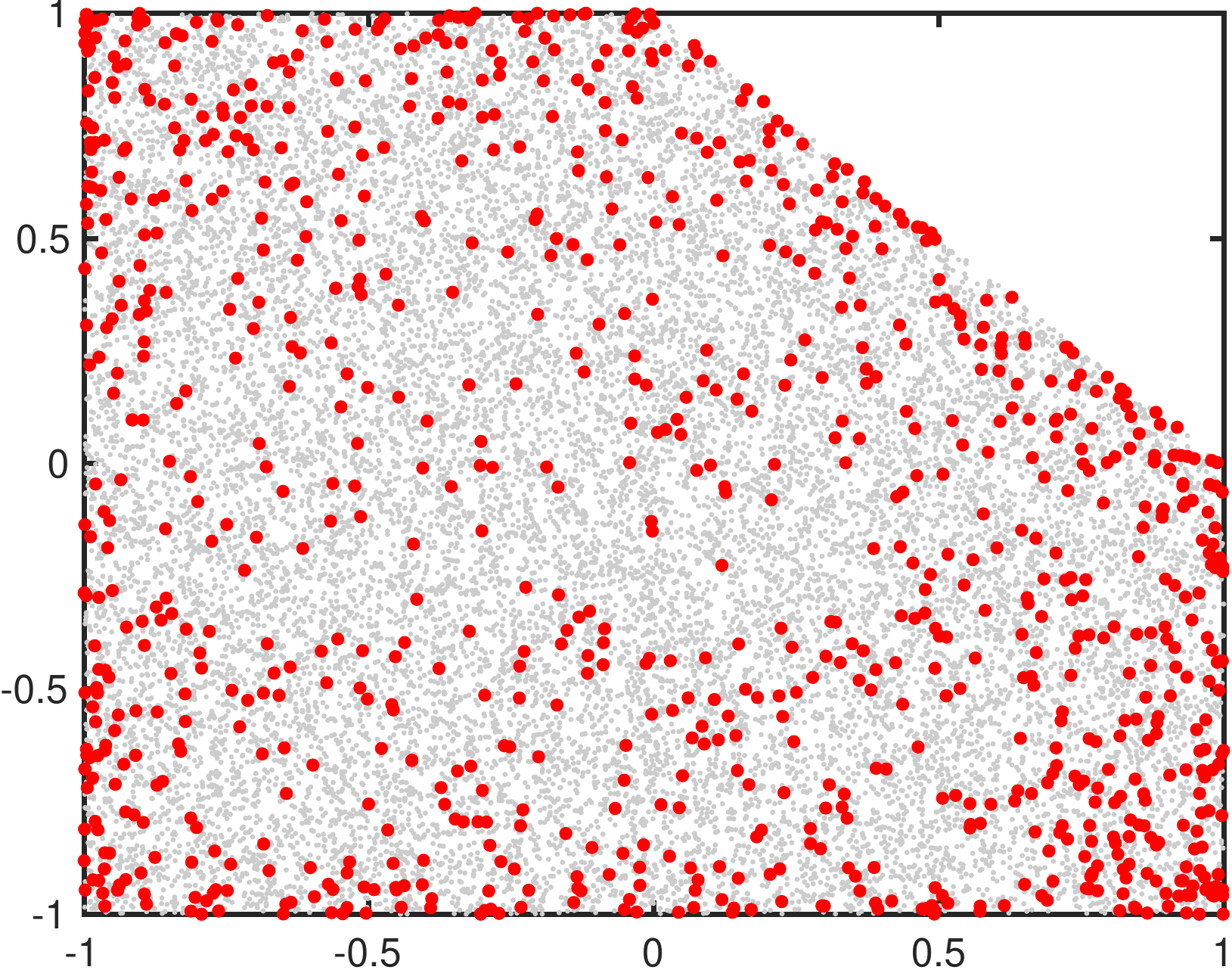}& \hspace{-0.3cm}
\includegraphics[scale=0.29]{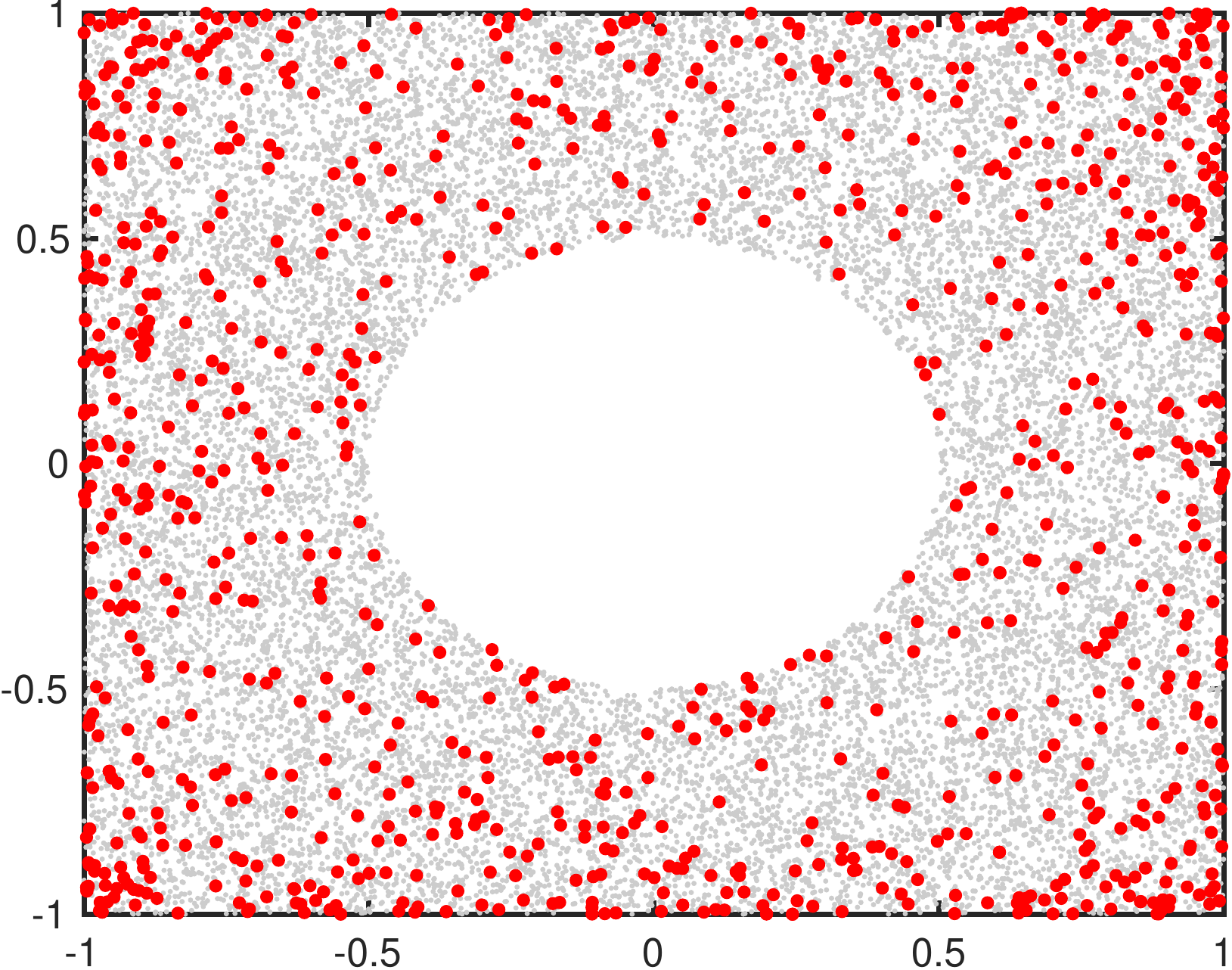}
  \end{tabular}
  }
\end{center}
\caption{
The domains $\Omega_1$, $\Omega_2$ and $\Omega_3$ (left to right) used in Fig.\ \ref{f:Message1} for $d = 2$, the fine grid with $K = 20000$ points, and the samples generated in a typical instance of Method 1 with $M = 1000$ and $N = 198$.
}
\label{f:Domains}
\end{figure}

In Fig.\ \ref{f:Message3} we examine the constant $\cC$ for the three domains and methods, and for different scalings of $M$ with $N$.  For Uniform, all scalings lead to an exponentially increasing constant $\cC$ -- a well-known phenomenon \cite{AdcockNecSamp}.  For Method 1 and Method 2, notice that any linear scaling $M = c N$ eventually leads to a growing constant $\cC$, whereas $\cC$ remains bounded for either of the two log-linear scalings $M = c N \log(N)$.  This result therefore verifies Theorem \ref{Thm_optimalsampling}.

\begin{figure}
\begin{center}
{\small
\begin{tabular}{ccc}
\includegraphics[scale=0.38]{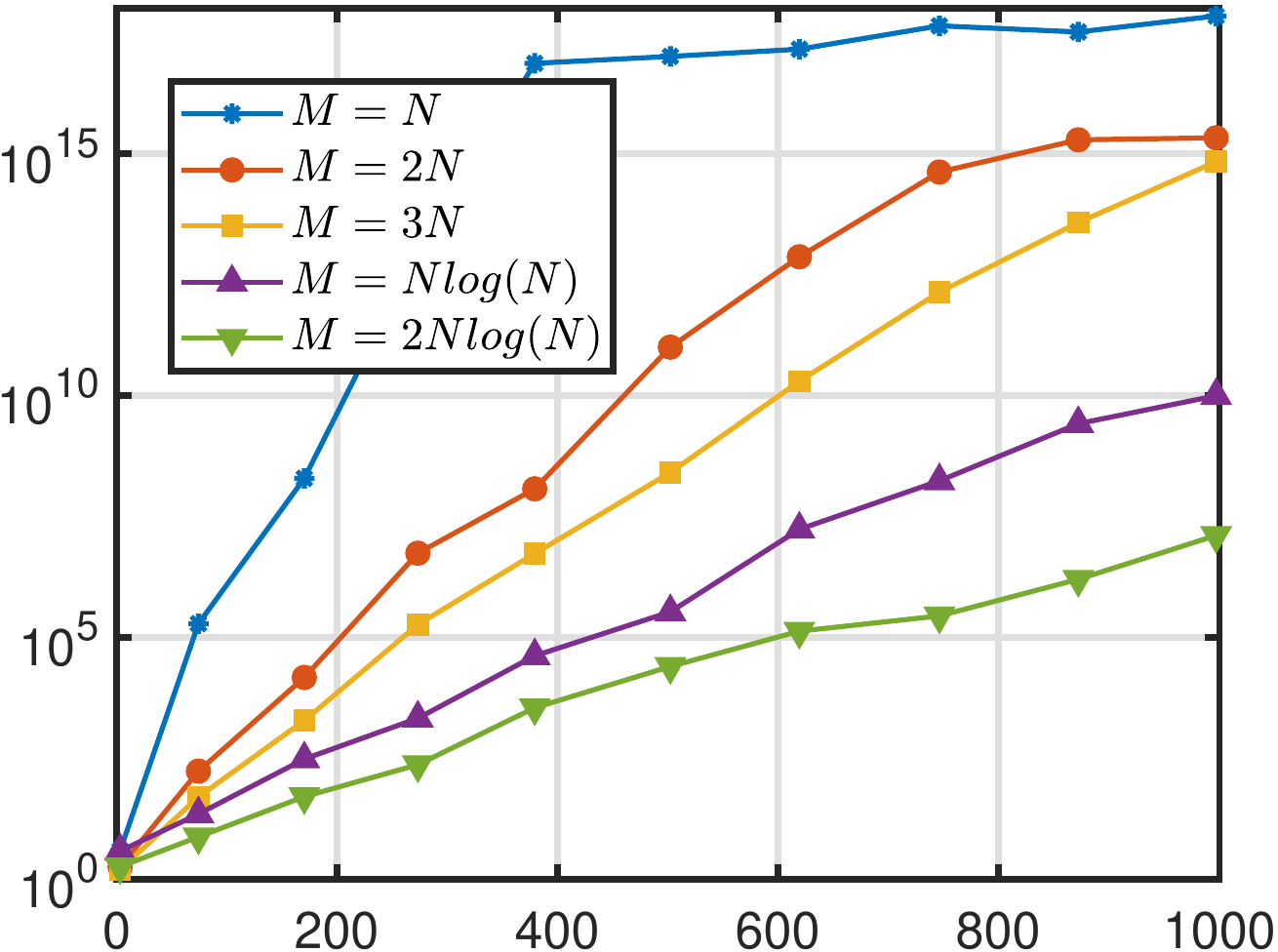} &
\includegraphics[scale=0.38]{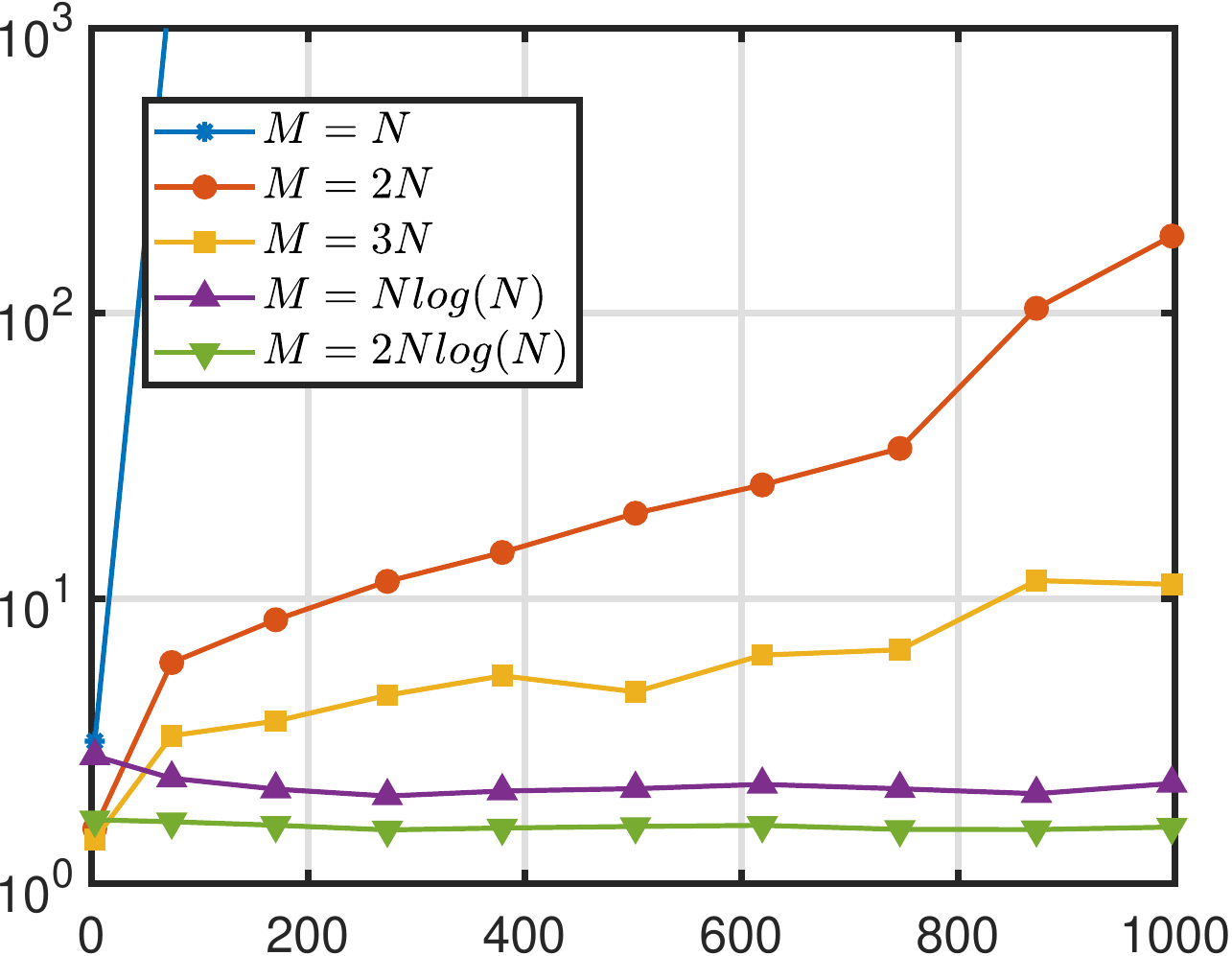} &
\includegraphics[scale=0.38]{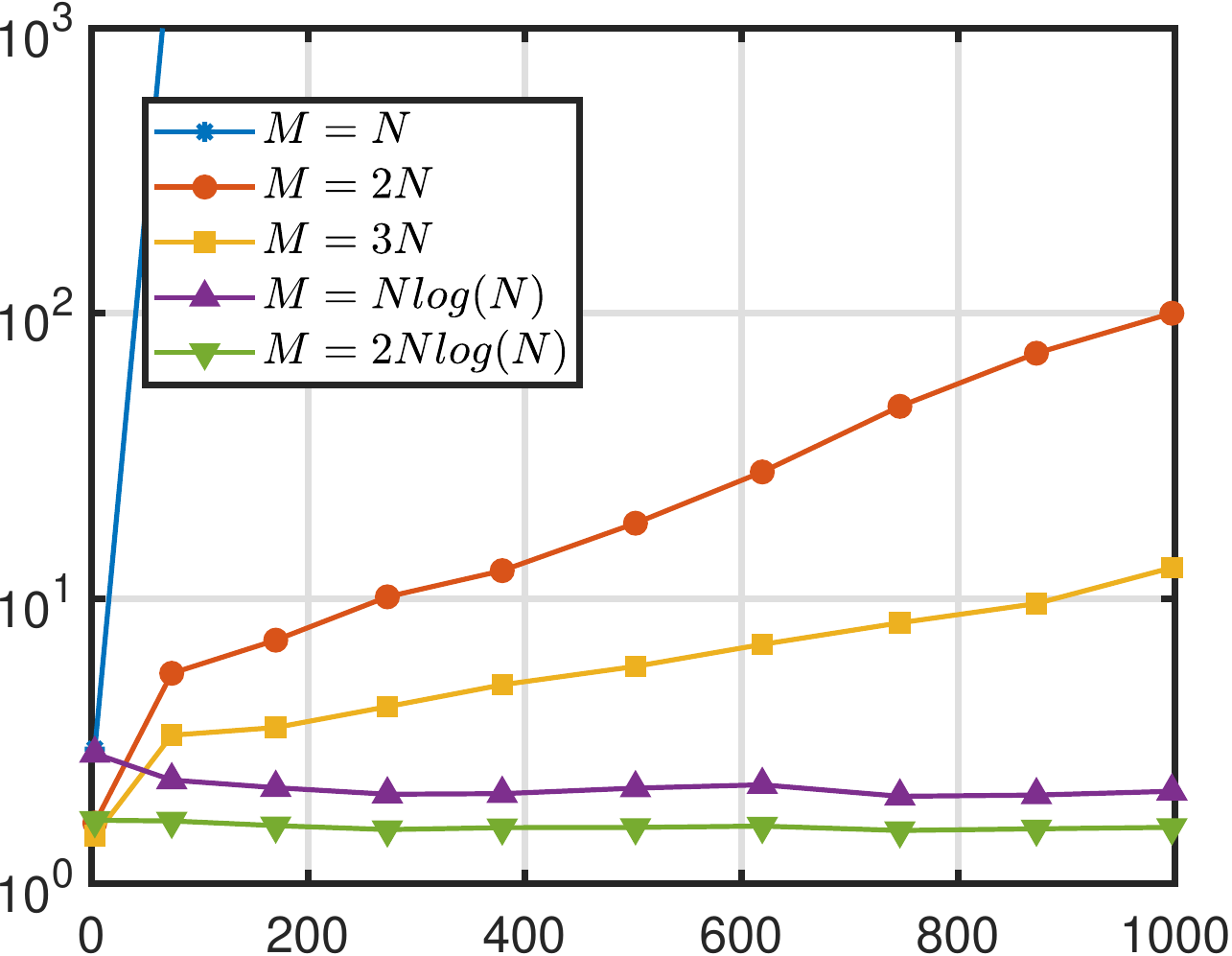} 
\\
\includegraphics[scale=0.38]{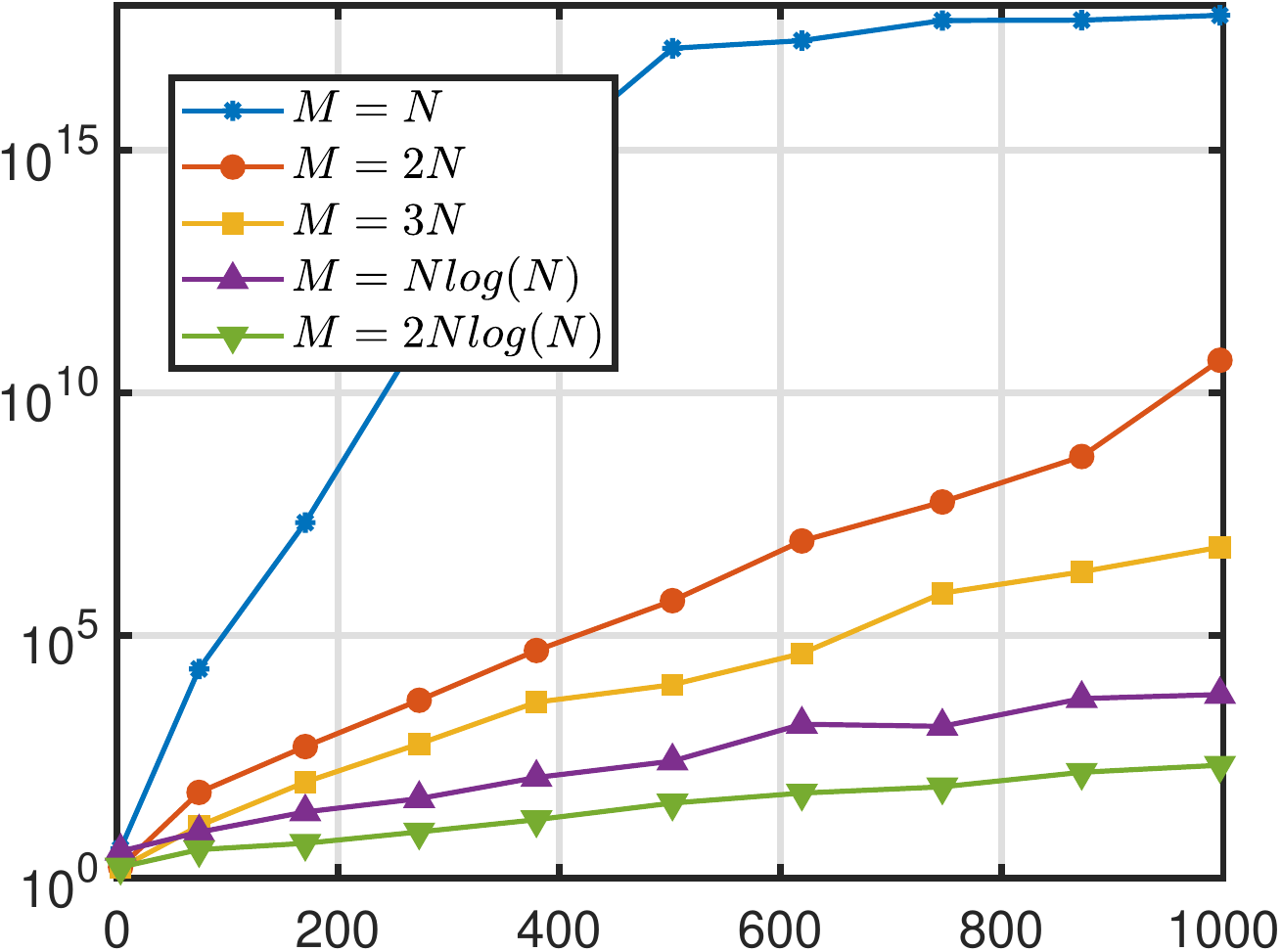} &
\includegraphics[scale=0.38]{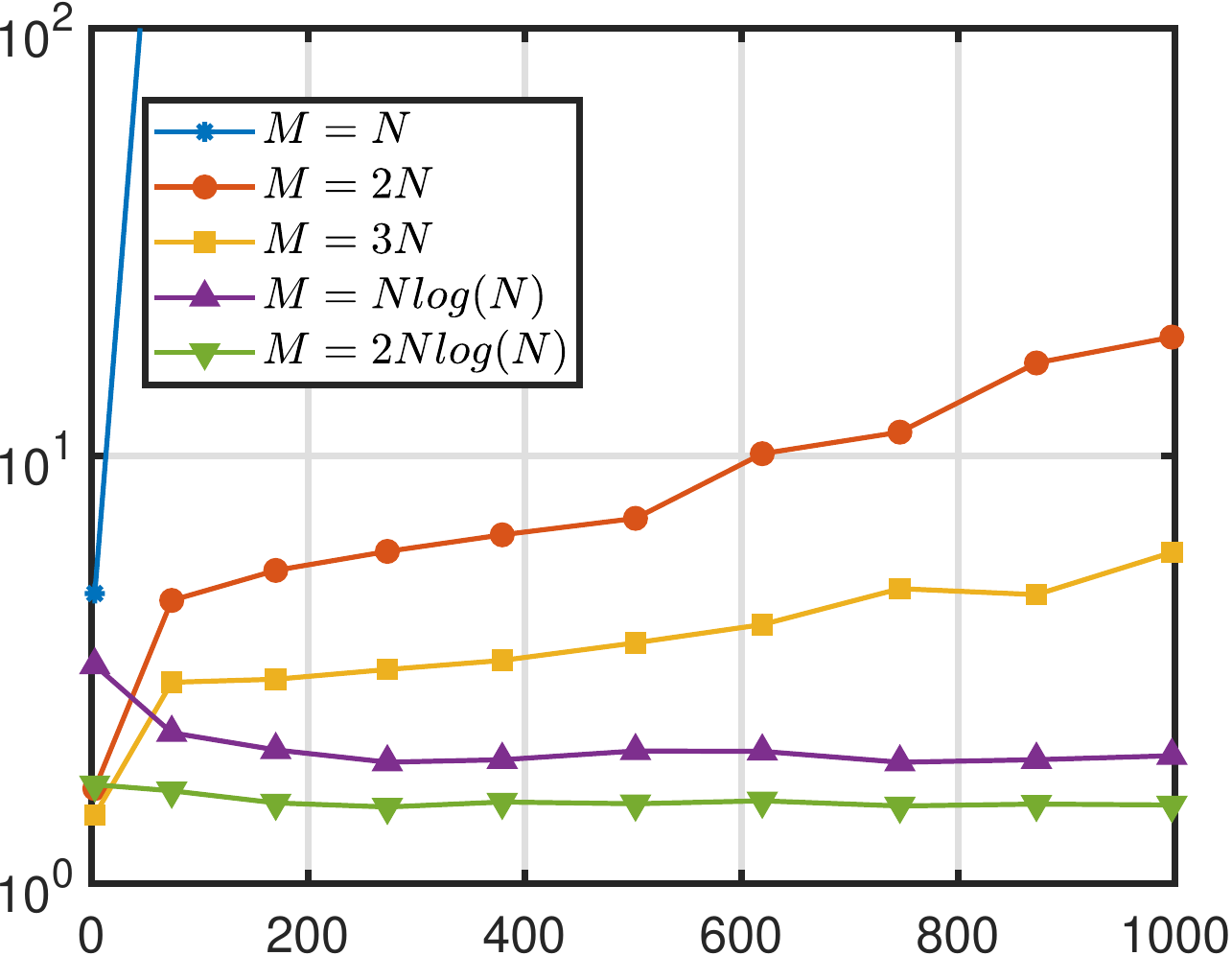} &
\includegraphics[scale=0.38]{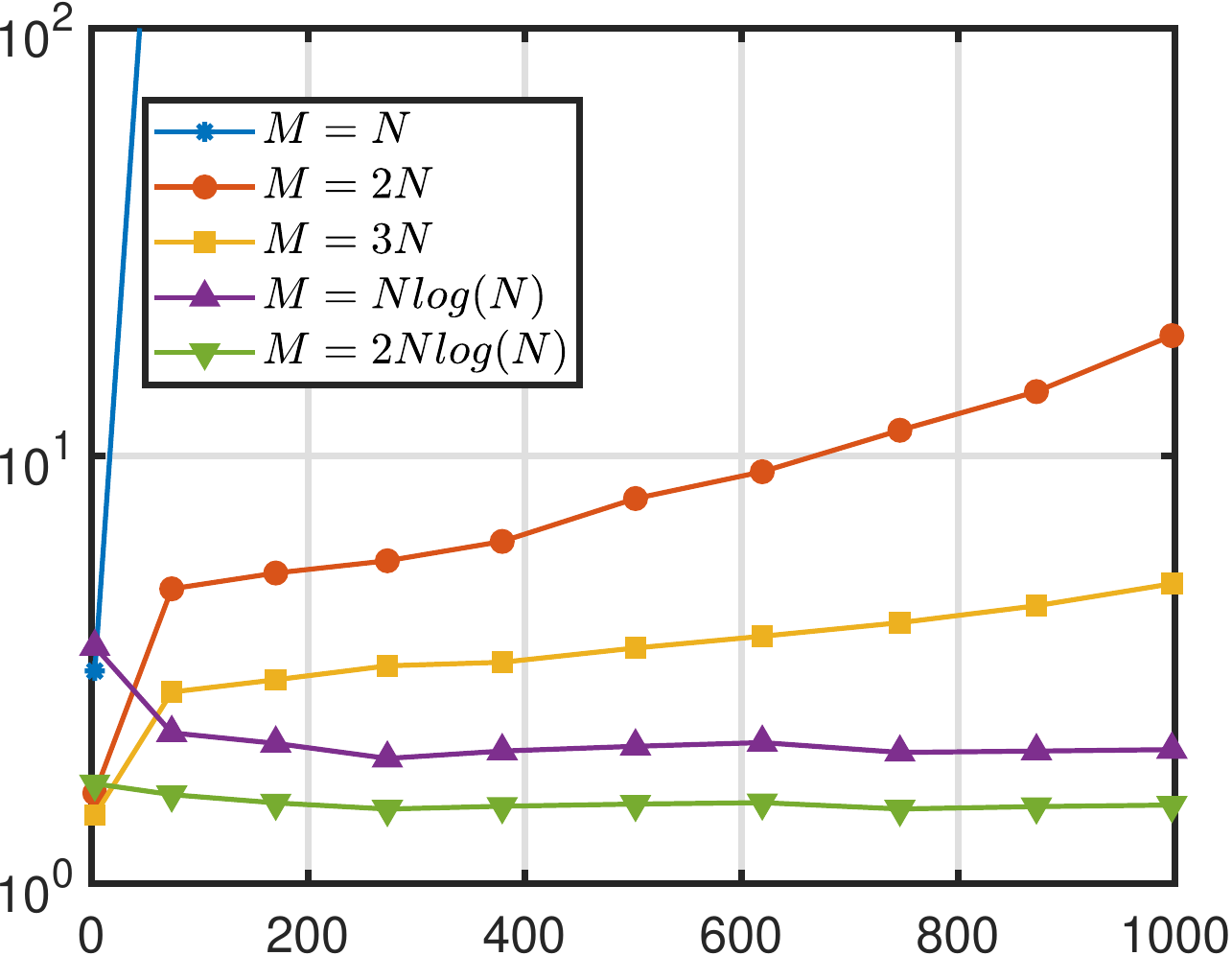}
\\
\includegraphics[scale=0.38]{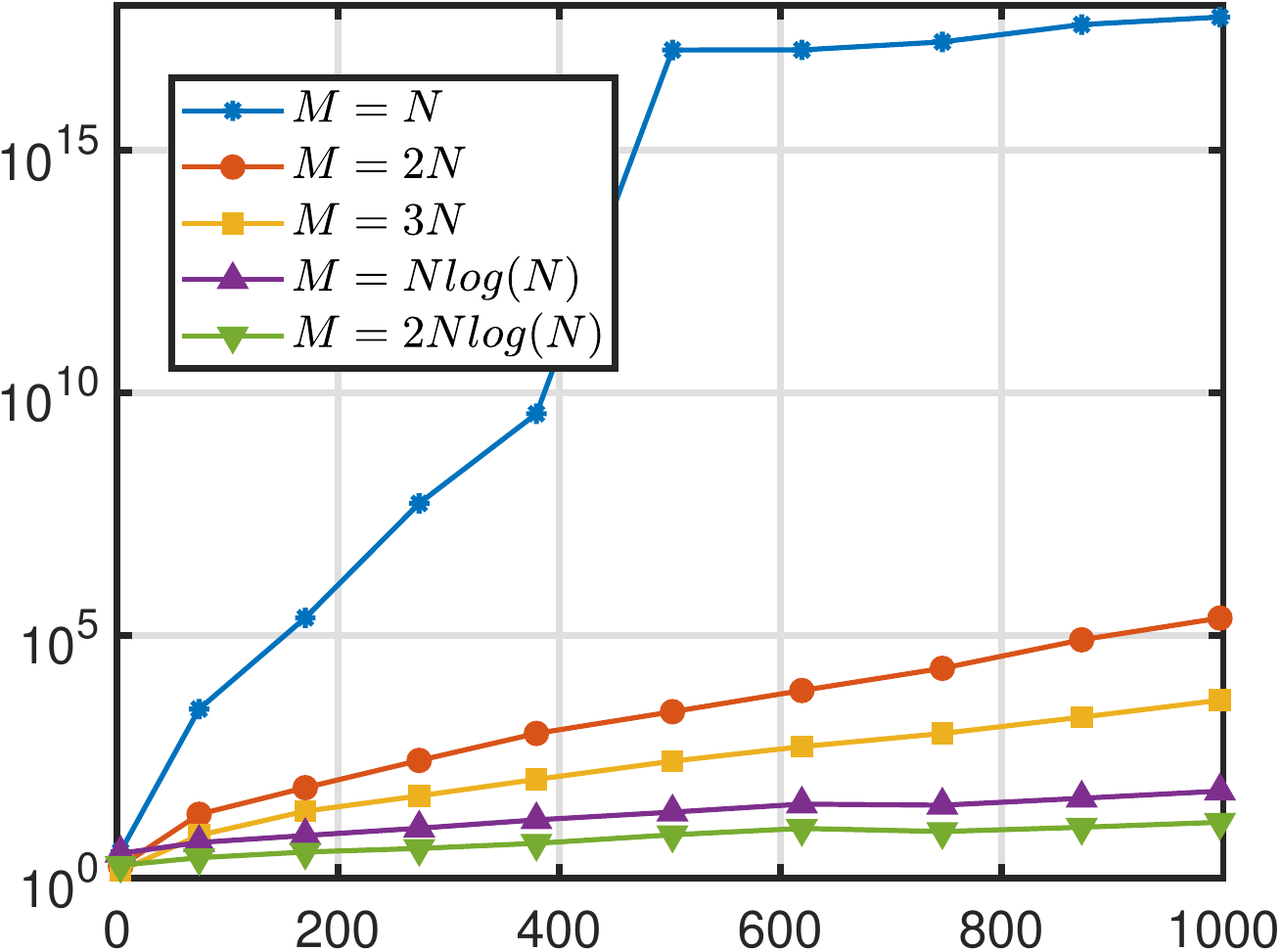} &
\includegraphics[scale=0.38]{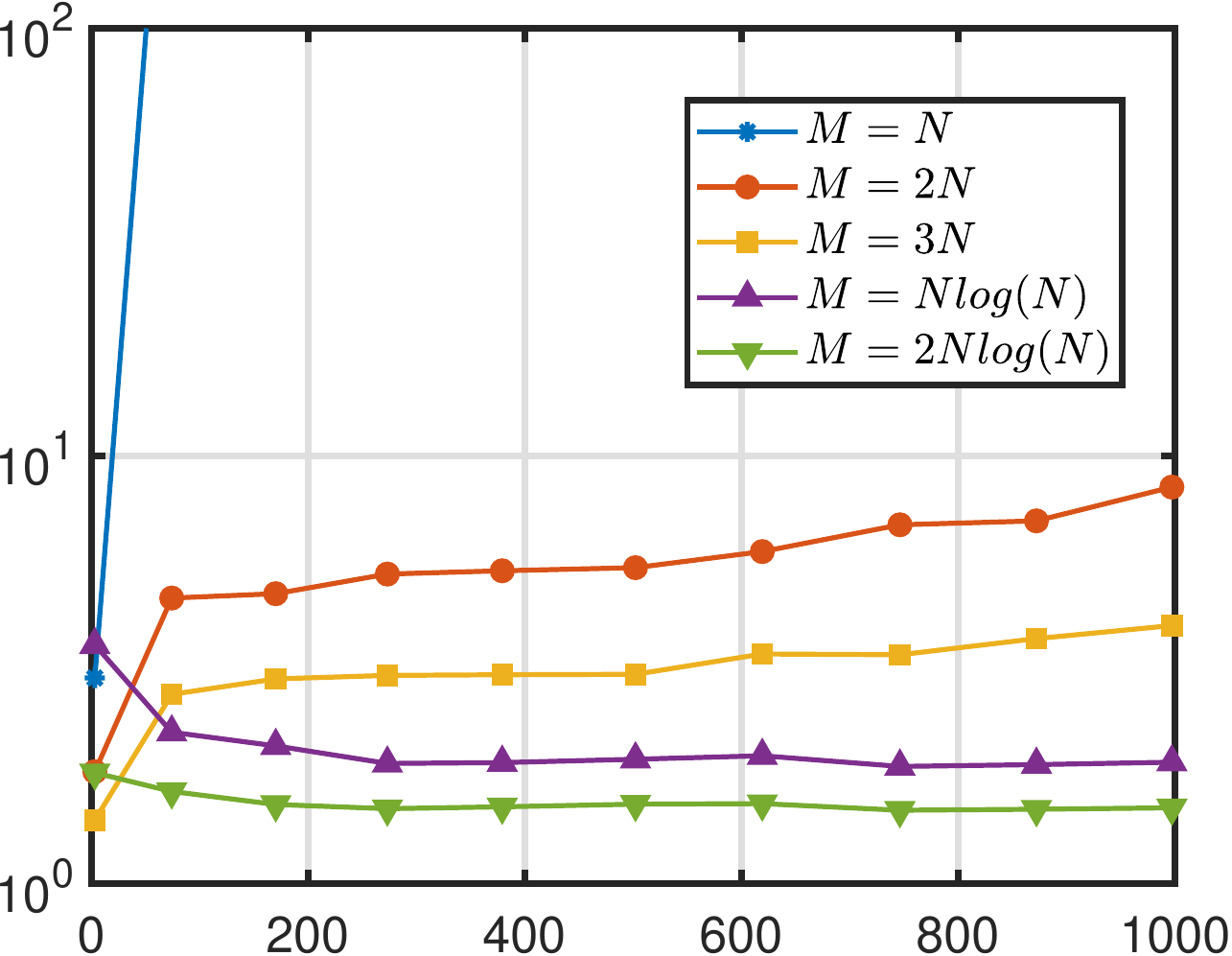} &
\includegraphics[scale=0.38]{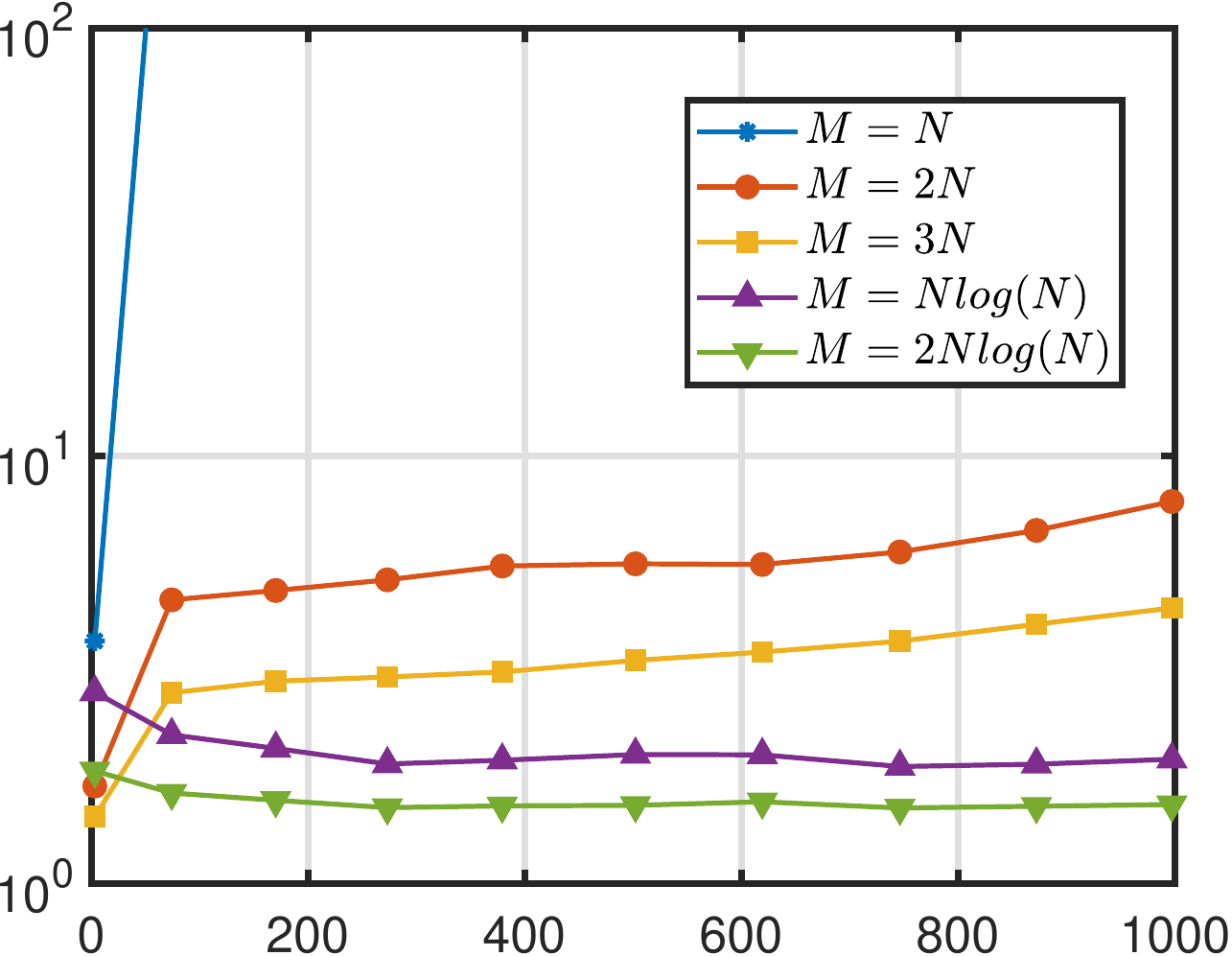}
\\  
\\  
\\  
Uniform & Method 1 & Method 2  
  \end{tabular}
  }
\end{center}
\caption{
The constant $\cC$ versus $N$ in $d = 2$ dimensions for the domains $\Omega_1$, $\Omega_2$, $\Omega_3$ (top to bottom) used in Fig.\ \ref{f:Message1}.
}
\label{f:Message3}
\end{figure}

{
\rem{
As discussed in \cite{adcock2018approximating} (see also \cite{BADHFramesPart2,BADHframespart}), the matrix $\bm{B}$ is in general ill-conditioned since the initial basis $\{ \psi_1,\ldots,\psi_N \}$ is nearly redundant for large $N$. Therein, this is tackled by using a truncated SVD. Conversely, this ill-conditioning is seemingly not an issue in Methods 1 and 2. As shown in \cite{MiglioratiIrregular}, an explanation for this is that the ill-conditioning of $\bm{B}$ does not prohibit generating a near-orthonormal basis for $P$ over the $K$-grid. Of course, if such a grid cannot be generated (see \S \ref{s:conclusion}) then one may have no choice but to work with the basis $\{ \psi_1,\ldots,\psi_N \}$ and the resulting ill-conditioning, as in \cite{adcock2018approximating}.
}
}

{
Thus far, the error has been computed via \R{Kgriderr} over the same grid of $K$ points used in the method (this corresponds to the first error bound in Theorem \ref{Thm_errorbound}). It is also important to consider the approximation's performance off this grid. In Fig.\ \ref{f:Tgriderr} we consider the error
\be{
\label{Tgriderr}
E_{\tilde{\tau}}(f) = \frac{\nmu{f - \tilde{f}}_{L^2(\Omega,\tilde{\tau})}}{\nmu{f}_{L^2(\Omega,\tilde{\tau})}},
}
where $\tilde{\tau}$ is a discrete measure over $T$ points, chosen randomly from the uniform measure on $\Omega$ and independently of the points used in the $K$-grid $\tau$. As this figure shows, when $K = 20000$, the error $E_{\tilde{\tau}}(f) $ behaves significantly worse than the error $E_{\tau}(f)$, which is shown in top row of Fig.\ \ref{f:Message1}. In view of Theorem \ref{Thm_errorbound}, this indicates that the $K$-grid is not large enough to ensure the constant $\cD$ is small.  As expected, doubling $K$ yields better behaviour for the error $E_{\tilde{\tau}}(f) $, and doubling it once more yields a further improvement.
}

\begin{figure}
\begin{center}
{\small
\begin{tabular}{ccc}
\includegraphics[scale=0.38]{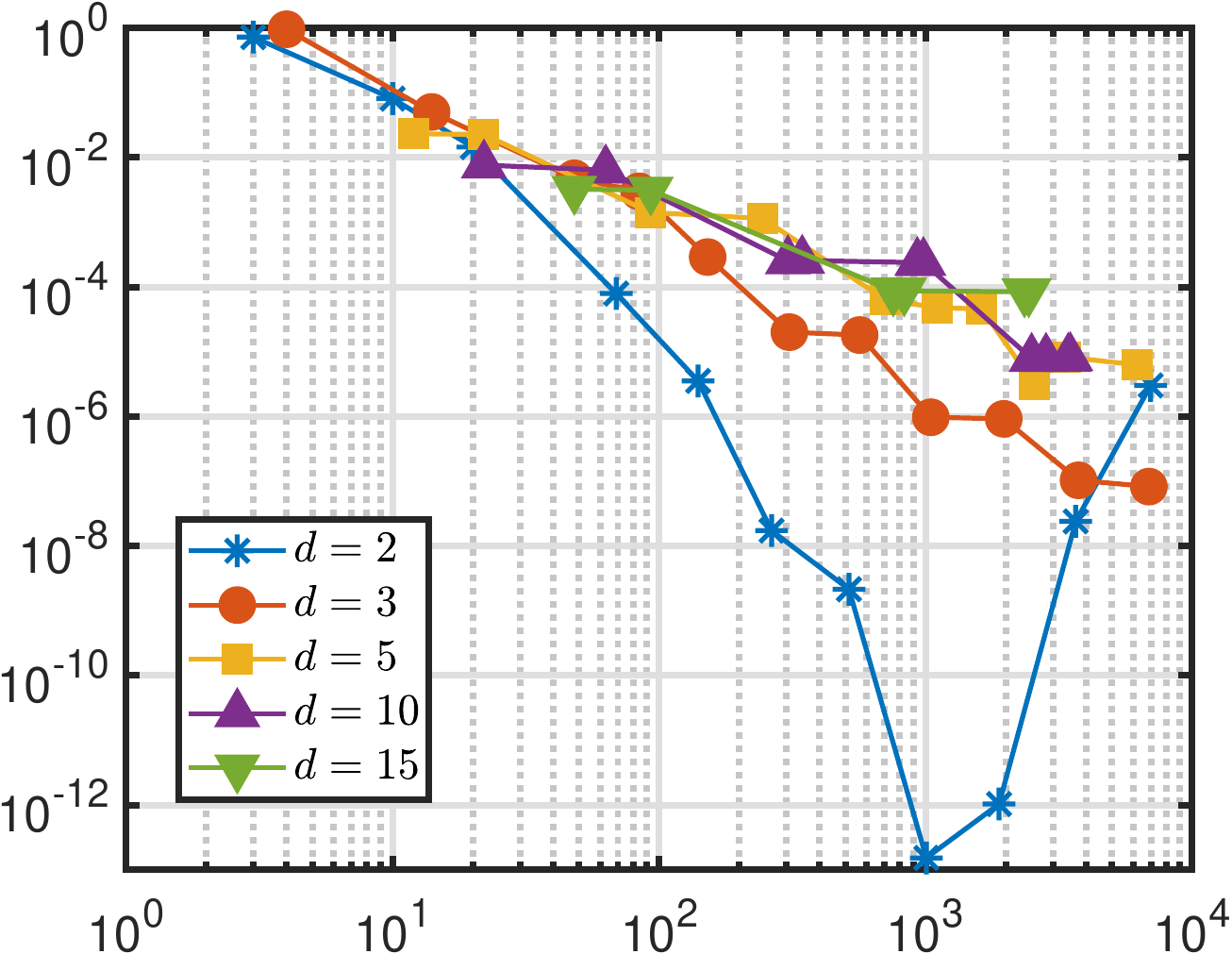}&
\includegraphics[scale=0.38]{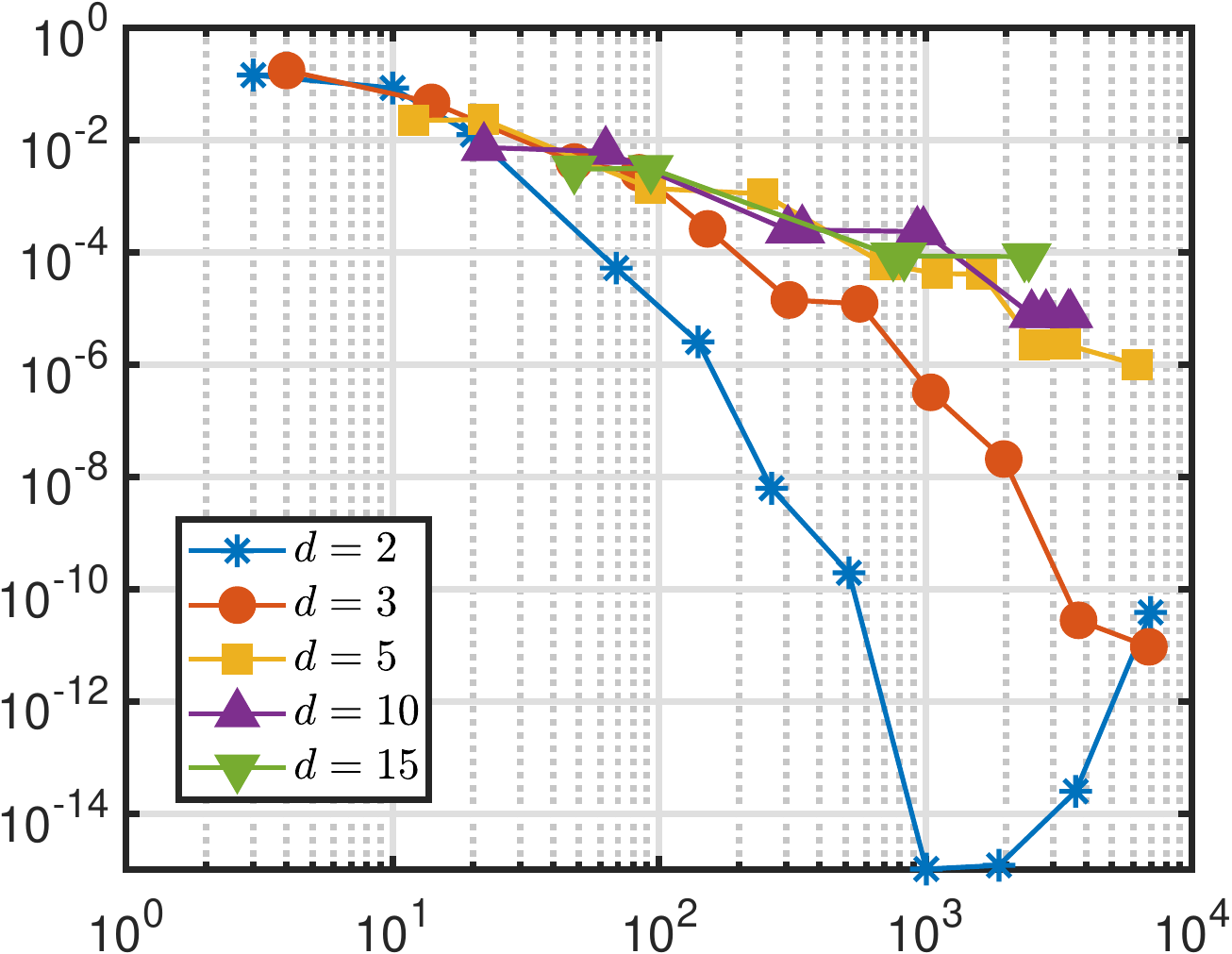} &
\includegraphics[scale=0.38]{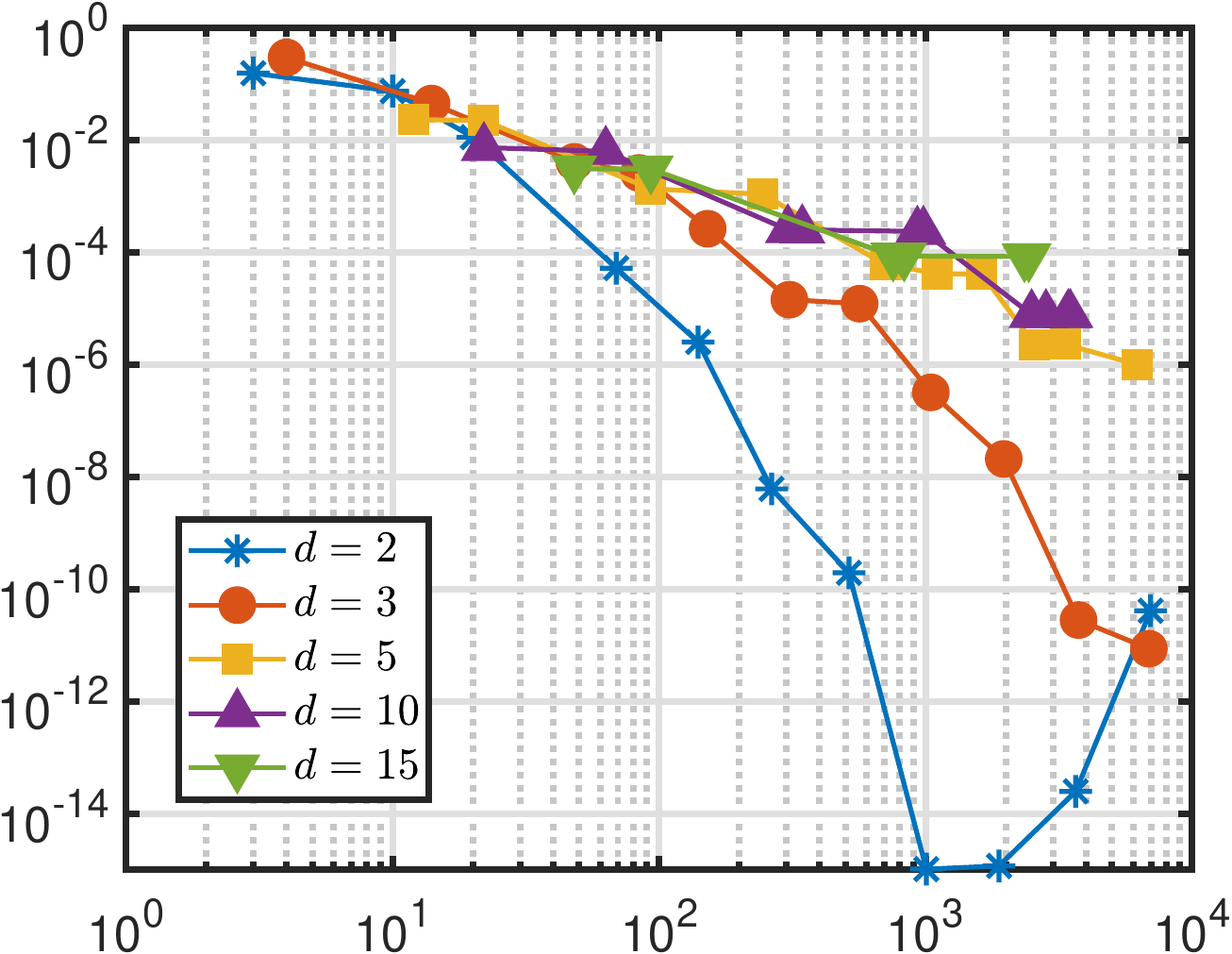} 
\\
\includegraphics[scale=0.38]{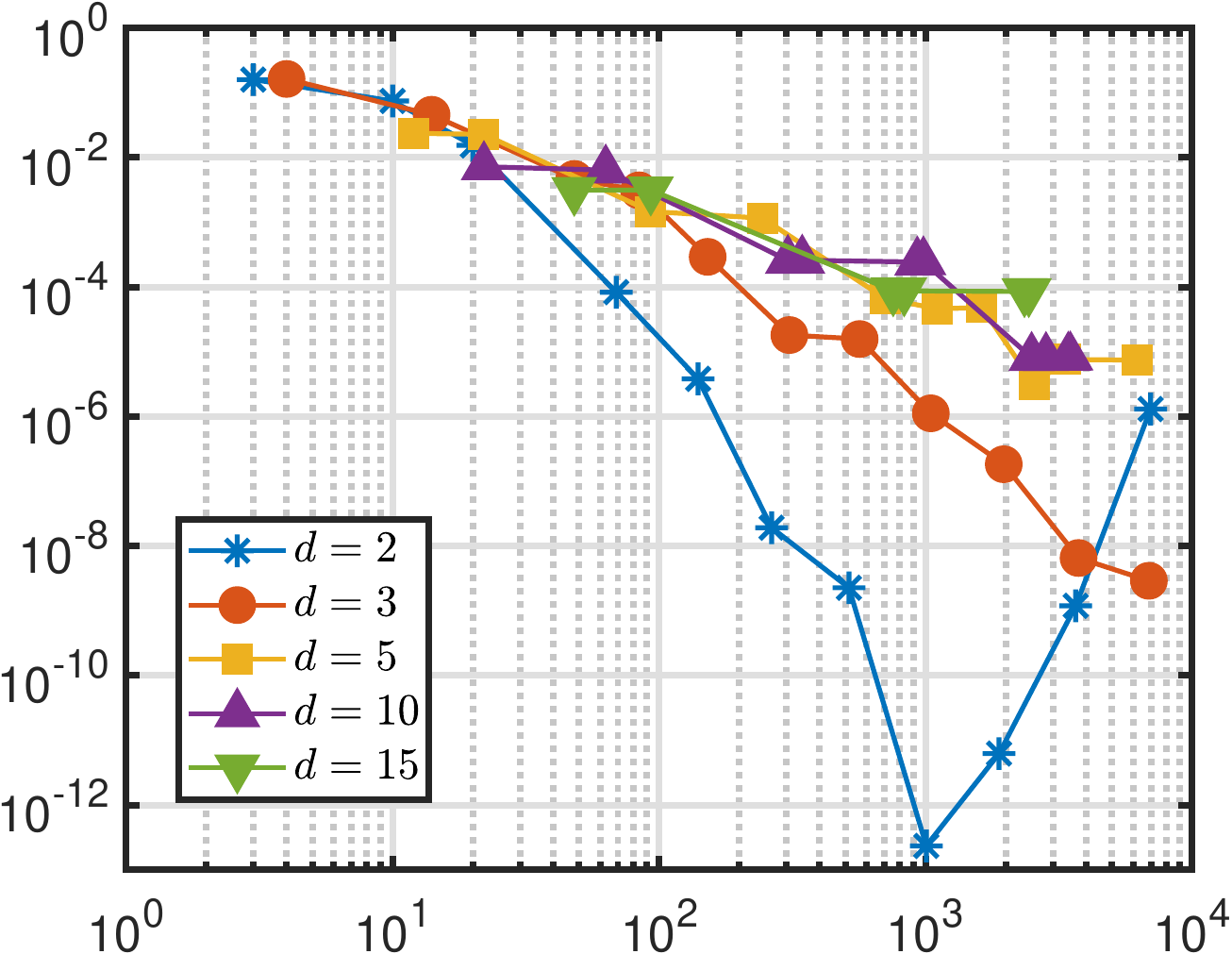}&
\includegraphics[scale=0.38]{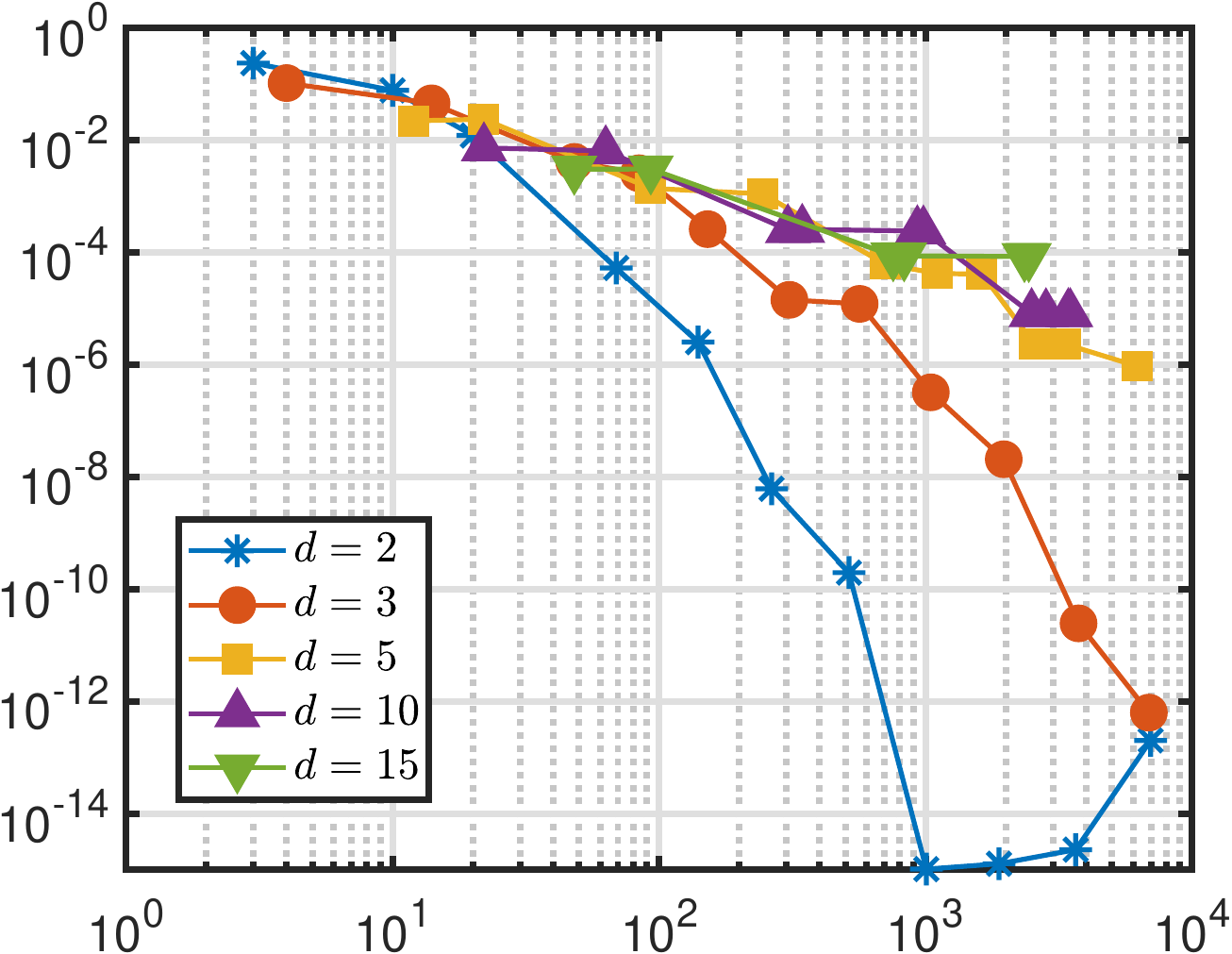} &
\includegraphics[scale=0.38]{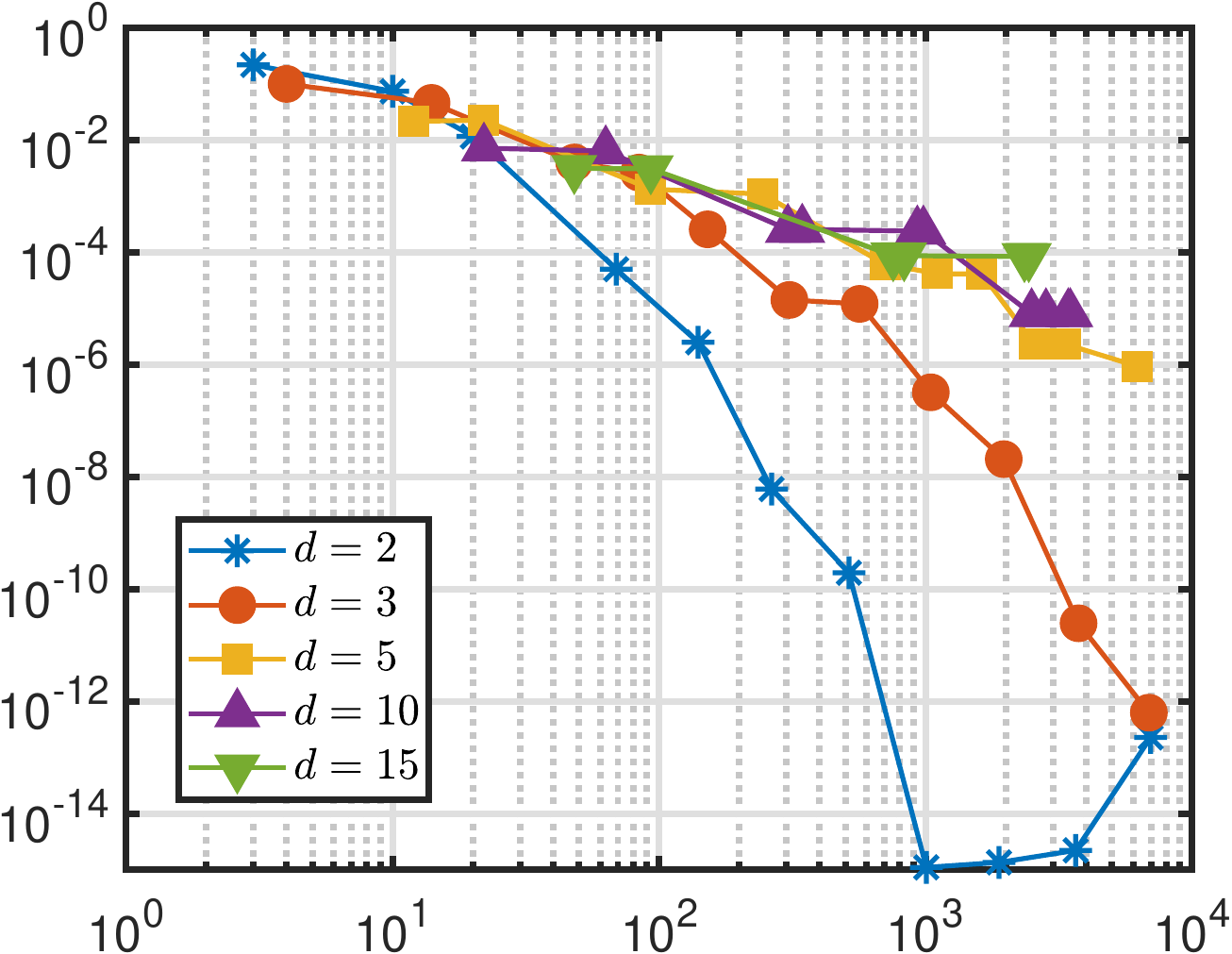} 
\\
\includegraphics[scale=0.38]{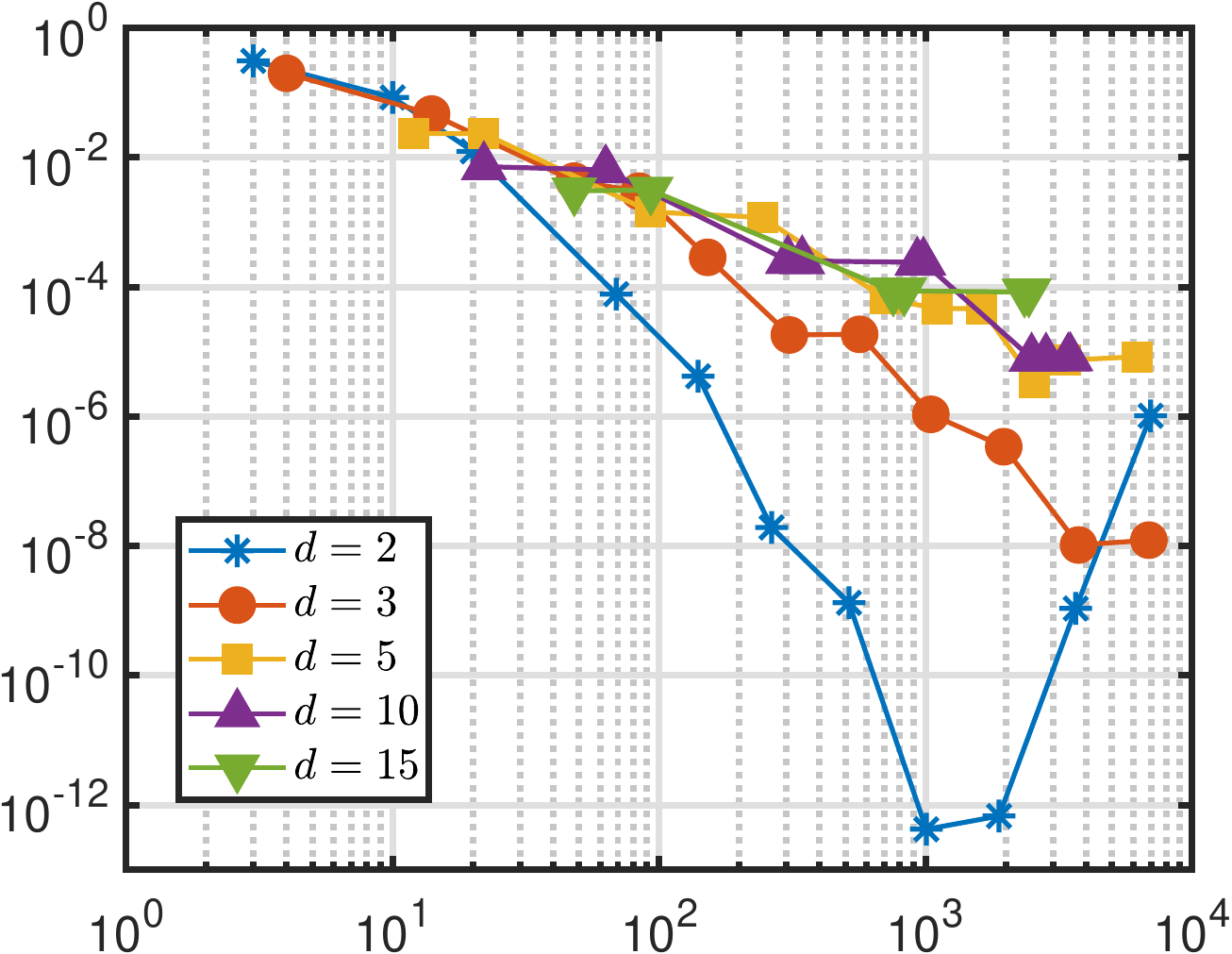}&
\includegraphics[scale=0.38]{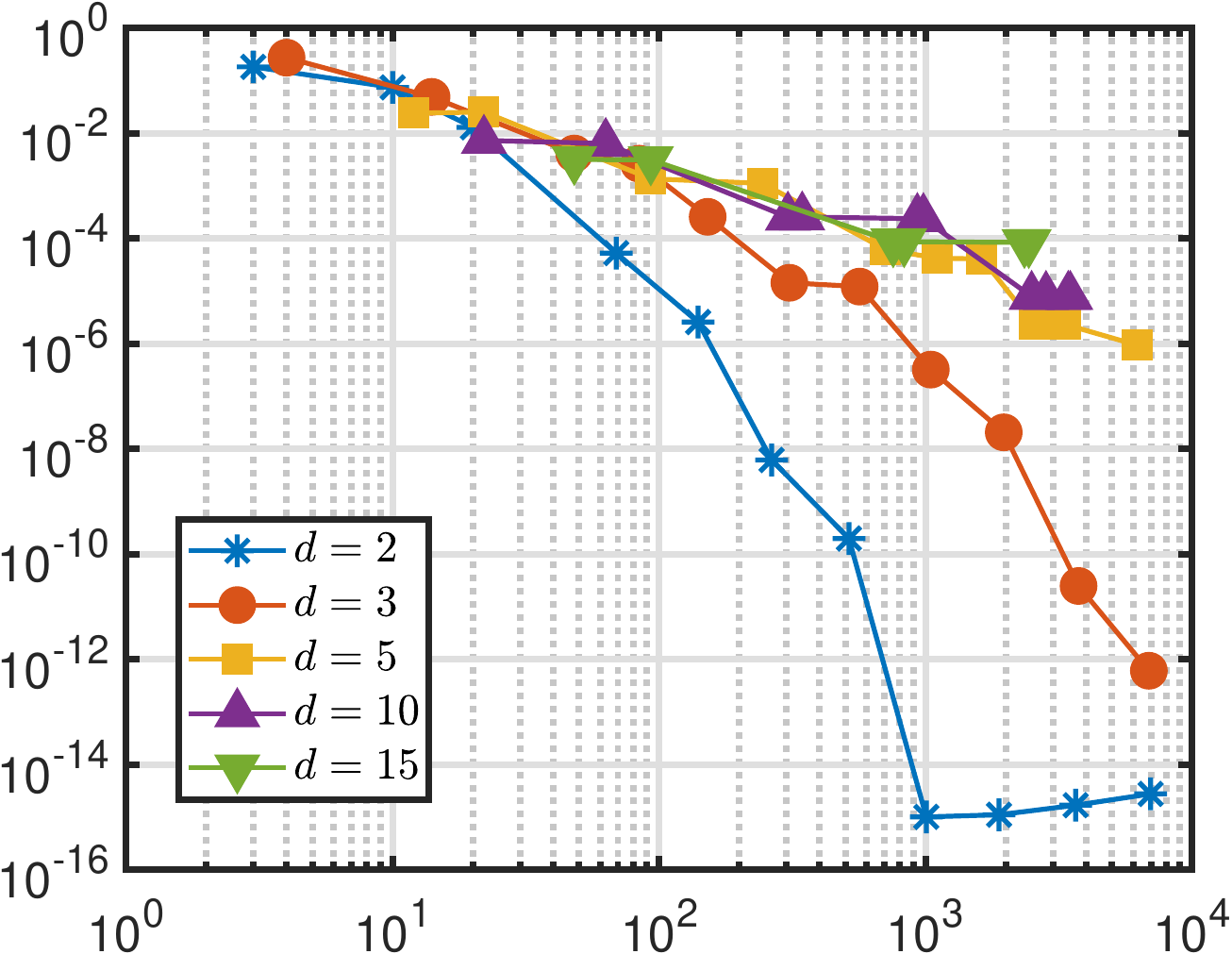} &
\includegraphics[scale=0.38]{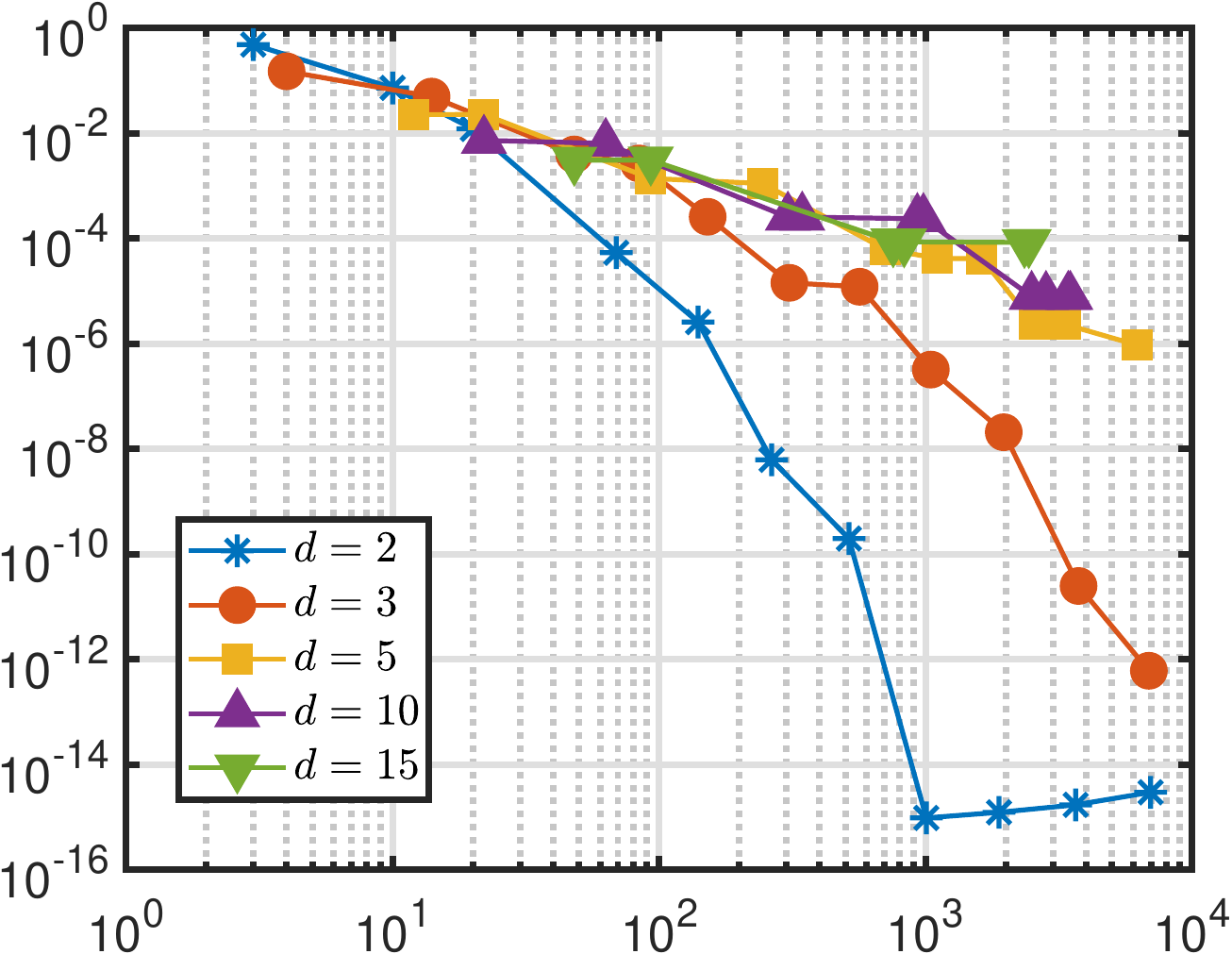}   
\\
Uniform & Method 1 & Method 2  
  \end{tabular}
  }
\end{center}
\caption{
{The error $E_{\tilde{\tau}}(f)$ versus $M$, with $M$ chosen as the smallest value such that $M \geq N \log(N)$, for $\Omega_1 = \left \{ \bm{y} : 1/4 \leq \nm{\bm{y}}_2 \leq 1 \right \}$ and $f = f_1$.  Top row: $T = K = 20000$.  Middle row: $T = K = 40000$. Bottom row: $T = K = 80000$.}
}
\label{f:Tgriderr}
\end{figure}

{This experiment demonstrates one of the challenges with approximation over general domains. Namely, while it is straightforward to check for good accuracy over the $K$-grid (indeed, one simply computes $\cC$), it is difficult to ensure \textit{a priori} good performance over the whole of $\Omega$.  Indeed, the constant $\cD$ is not straightforwardly computable. As noted, if $\Omega$ has the $\lambda$-rectangle property, then one may use Propositions \ref{p:Ksize} and \ref{p:Klambdarect} to estimate $K$ (we caution, however, this estimate may not be particularly useful if $\lambda$ is small). But for domains not satisfying this property, as is the case for the domain considered in Fig.\ \ref{f:Tgriderr},  the answer to the question of how large $K$ should be to ensure good accuracy over $\Omega$ is currently unknown.}

Finally, in Fig.\ \ref{f:Message4} we demonstrate a curious phenomenon: in certain cases, there may be far less benefit from using these methods over uniform sampling.  In this experiment we consider an annular domain with outer radius $1/2$.  In this case, quite in contrast to what was seen for the annular domain $\Omega_1$ (which has outer radius $1$) in Fig.\ \ref{f:Message1}, the approximation converges.  Furthermore, neither Method 1 nor Method 2 achieves a better rate of convergence.  Fig.\ \ref{f:Message4C} shows the constant $\cC$ for all three methods when $d = 2$.  {Unlike in the case of $\Omega_1$} (see Fig.\ \ref{f:Message3}) {for Uniform the constant $\cC$} remains bounded {when log-linear sampling is used}, although it is several orders of magnitude larger than for Methods 1 and 2 {with} the same scaling.  This phenomenon relates to the fact that the domain $\Omega$ in this case is compactly contained in $(-1,1)^d$.  Hence the Legendre polynomials on $[-1,1]^d$, when restricted to $\Omega$ constitute a frame.  See {\cite[Sec.\ 8.2]{adcock2018approximating}} for further discussion.

\begin{figure}
\begin{center}
{\small
\begin{tabular}{ccc}
\includegraphics[scale=0.38]{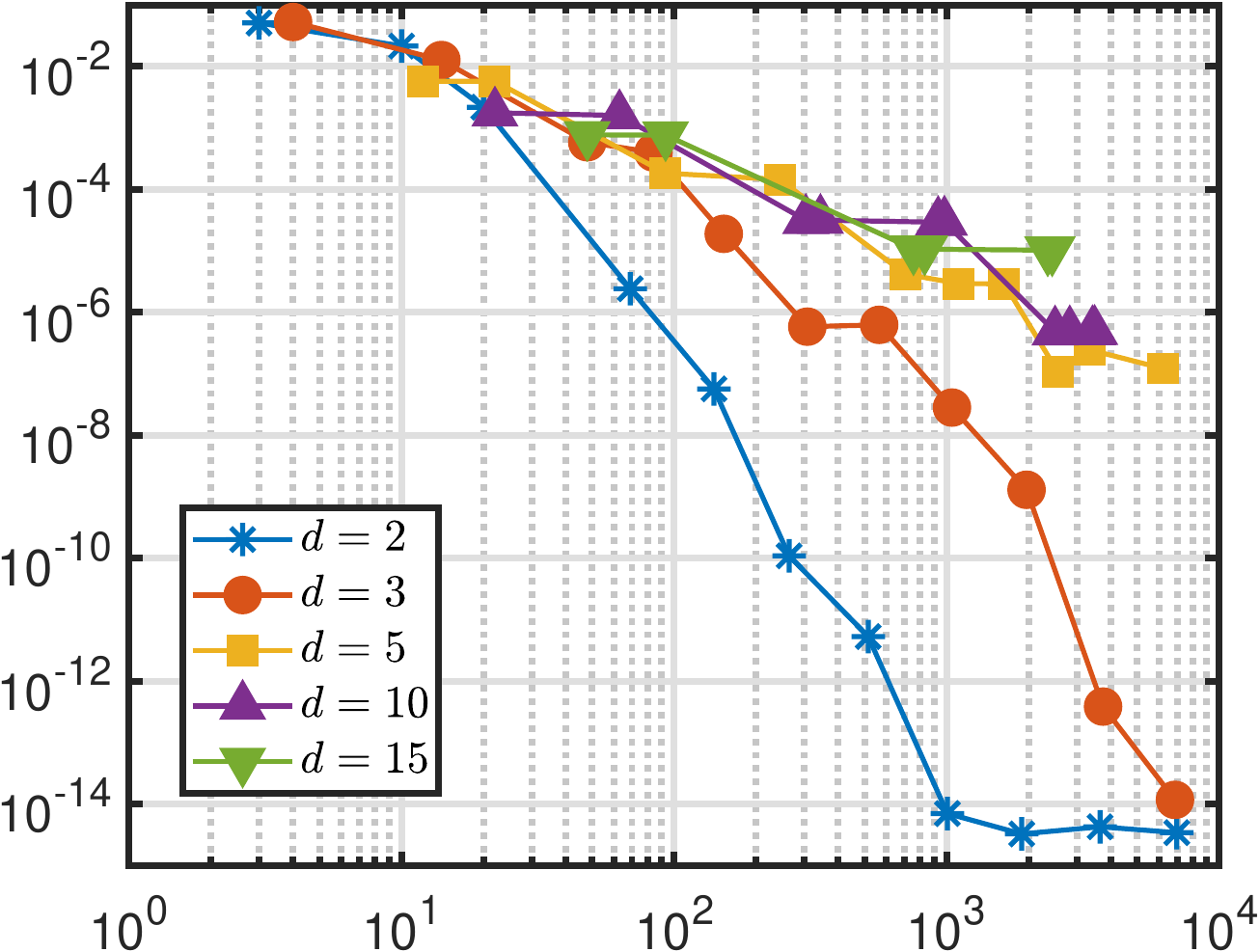} &
\includegraphics[scale=0.38]{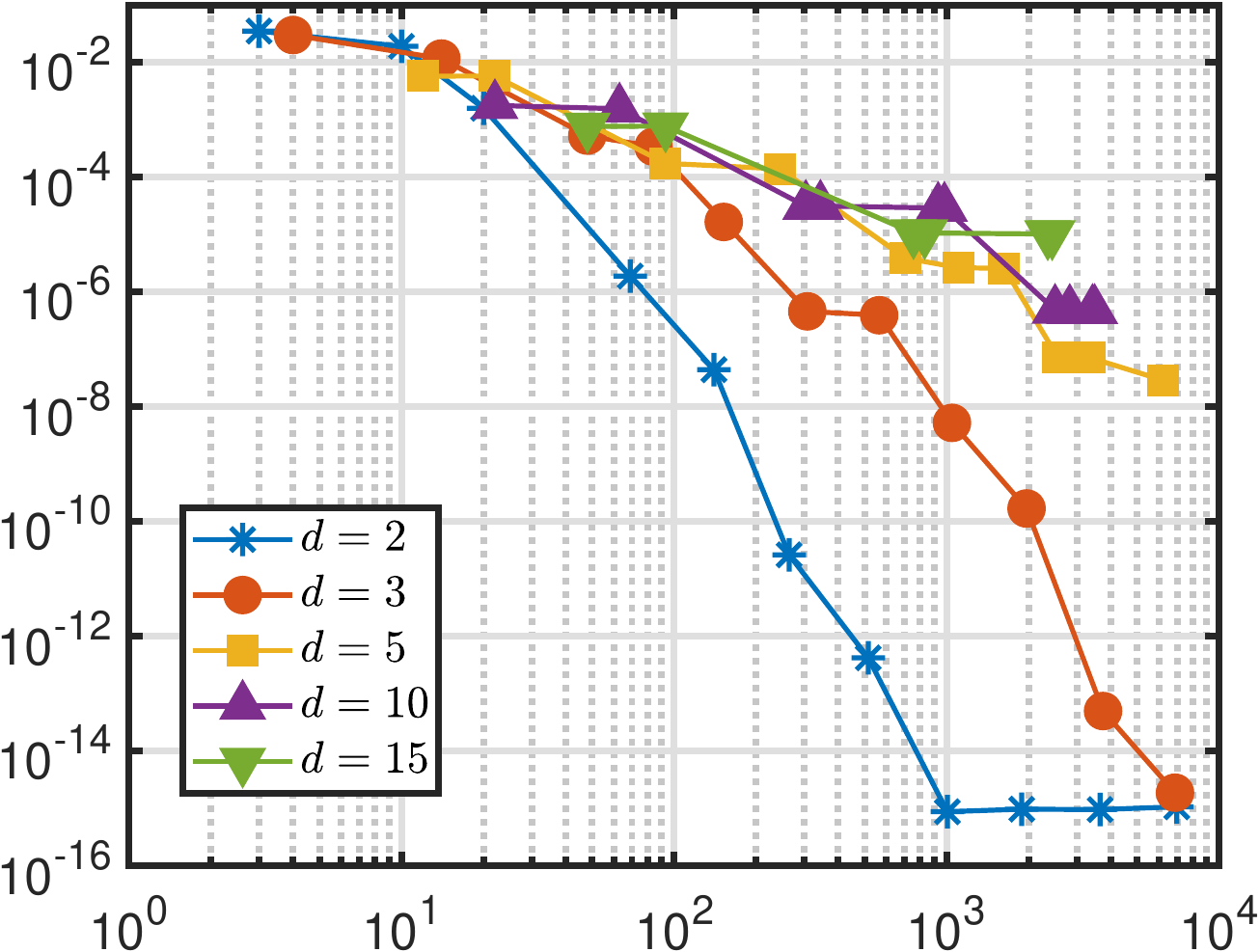} &
\includegraphics[scale=0.38]{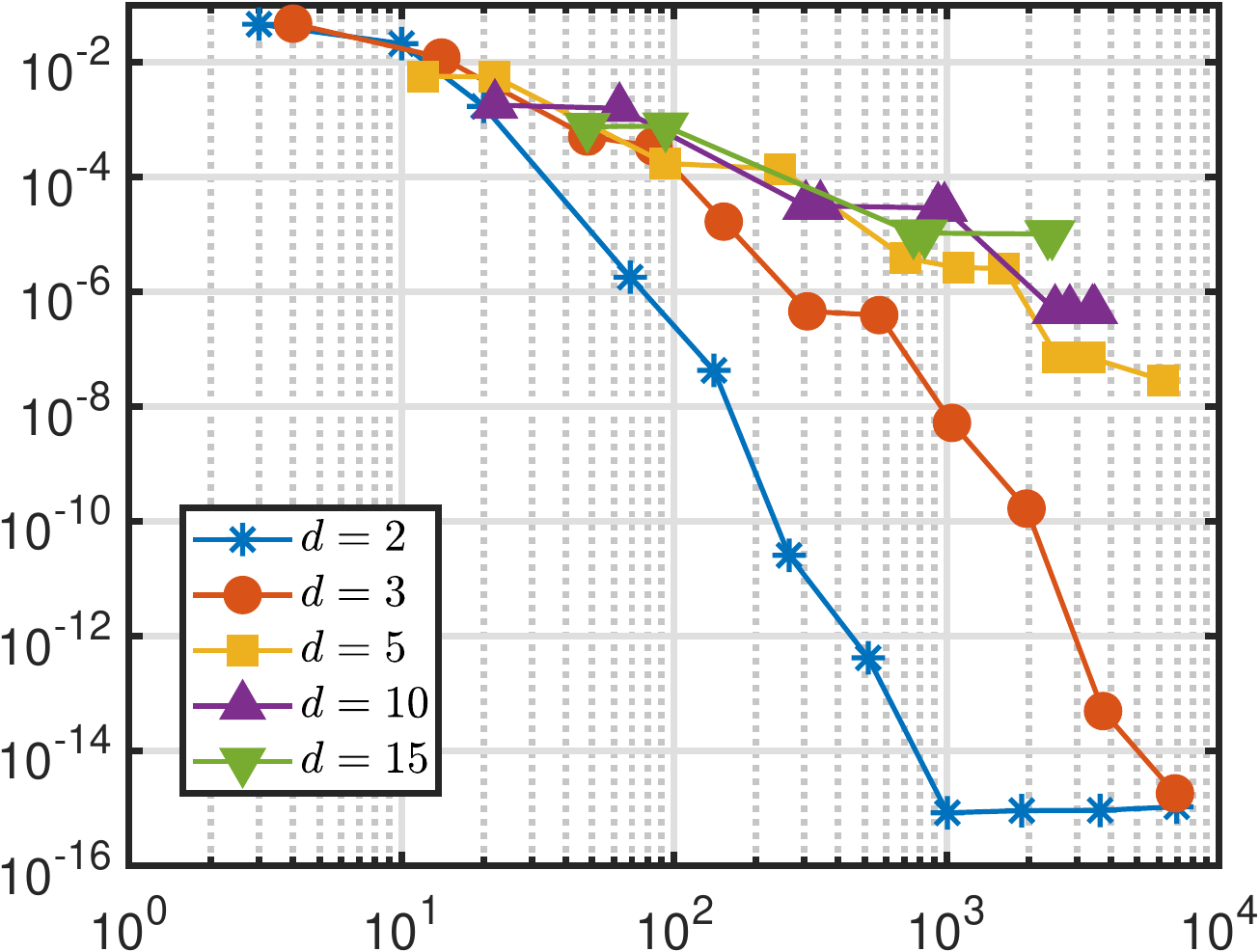}
  \\  
Uniform & Method 1 & Method 2  
  \end{tabular}
  }
\end{center}
\caption{
The error {$E_{\tau}(f)$} versus $M$ for the domain $\Omega = \left \{ \bm{y} : 1/8 \leq \nm{\bm{y}}_2 \leq 1/2 \right \}$ with $f = f_1$.
}
\label{f:Message4}
\end{figure}

\begin{figure}
\begin{center}
{\small
\begin{tabular}{ccc}
\includegraphics[scale=0.38]{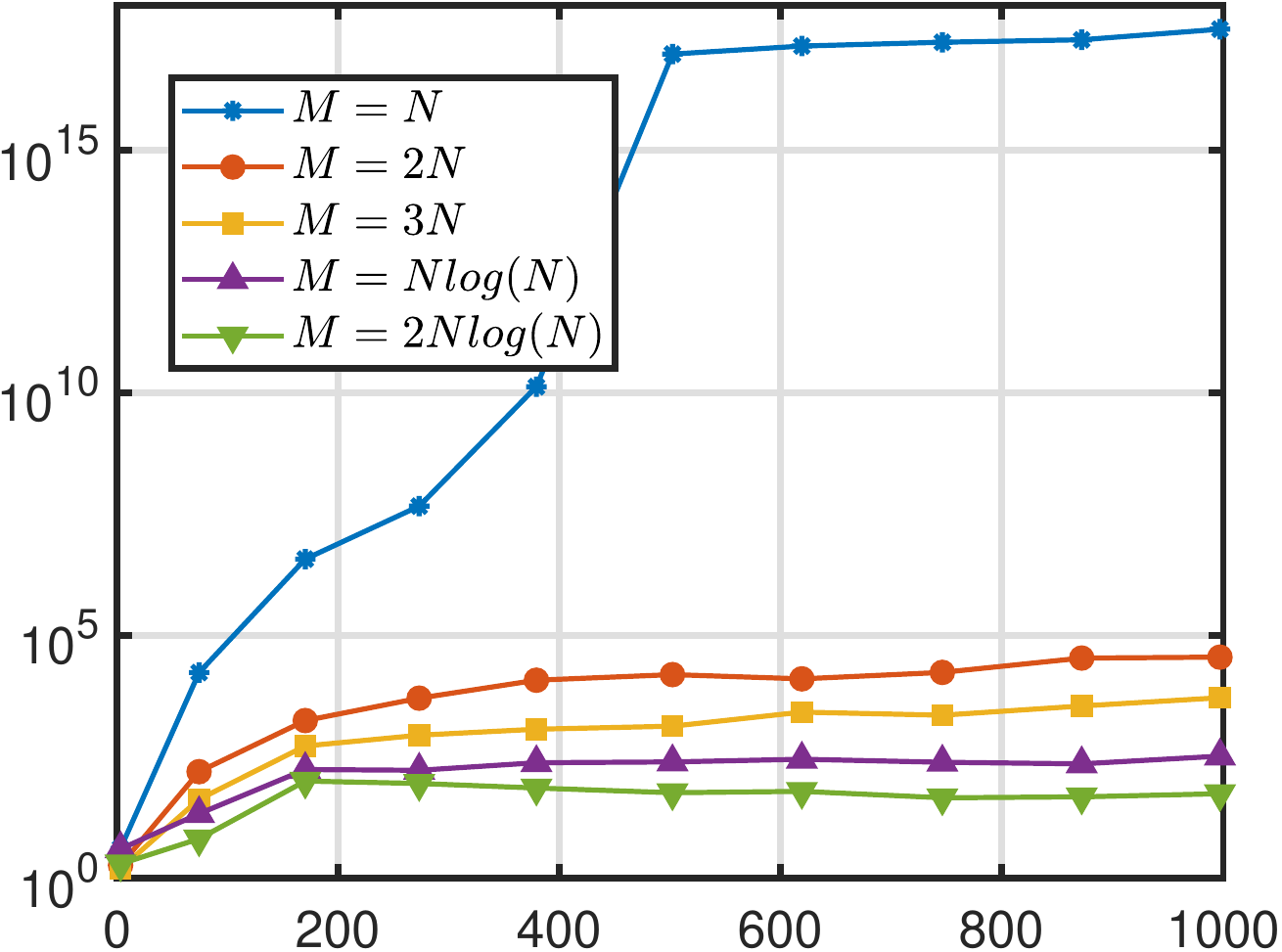} &
\includegraphics[scale=0.38]{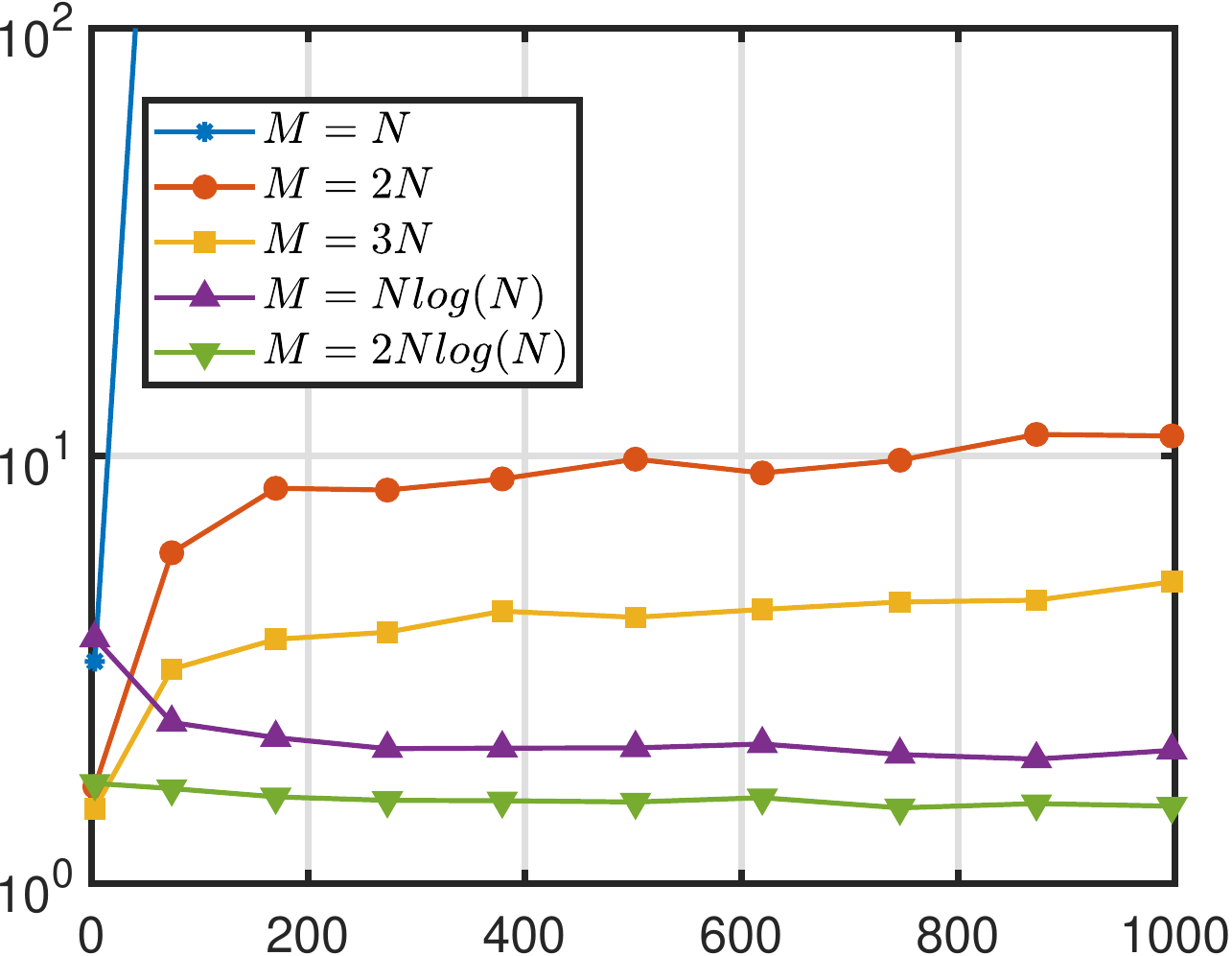} &
\includegraphics[scale=0.38]{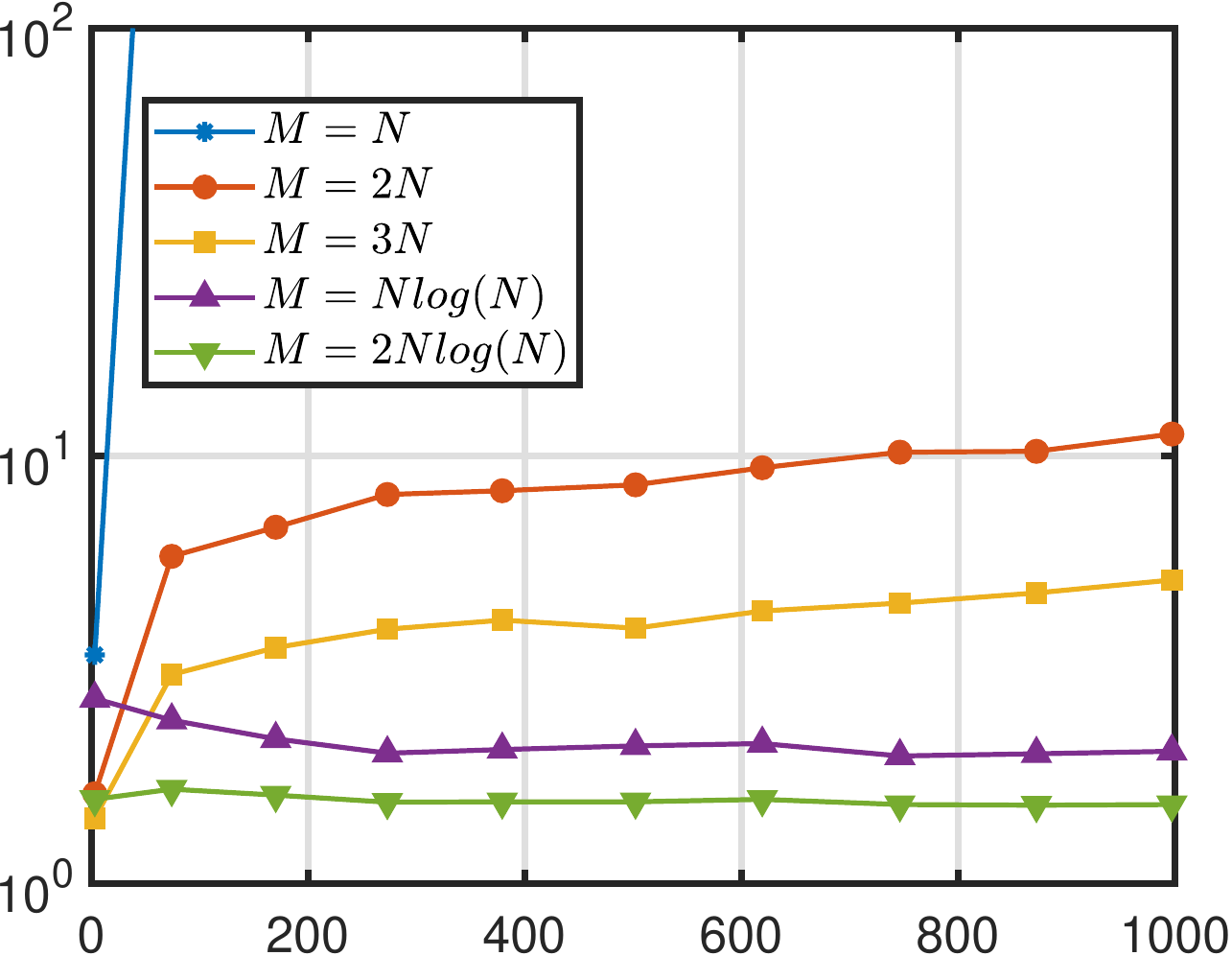}
  \\  
Uniform & Method 1 & Method 2  
  \end{tabular}
  }
\end{center}
\caption{
The constant $\cC$ versus $N$ for the domain $\Omega = \left \{ \bm{y} : 1/8 \leq \nm{\bm{y}}_2 \leq 1/2 \right \}$ with $d = 2$.
}
\label{f:Message4C}
\end{figure}

\section{Conclusions}\label{s:conclusion}

In this paper, we introduced new methods for optimal weighted least-squares approximations in general domains and arbitrary finite-dimensional spaces based on a discrete grid $Z$.  As we showed, under the log-linear sample complexity $M \gtrsim N \log(N)$, these methods are provably well conditioned and accurate over the grid. {This improves on the method introduced in \cite{adcock2018approximating}, for which the sampling complexity at best quadratic in $N$.}  To obtain accuracy over the original space, a random grid is used, whose size $K$ is related to the Nikolskii constant $\cN(P,\rho)$.  For domains possessing the so-called $\lambda$-rectangle property the quadratic scaling $K \gtrsim N^2 \log(N)$ is sufficient.  We introduce two versions of this method, with the latter recycling all its samples when the approximation $P$ is augmented.  Numerical experiments confirm the benefits of these methods over standard sampling.

{
As noted in the previous section, a shortcoming of this method is ensuring $K$ is large enough to guarantee a good approximation over $\Omega$ when $\Omega$ does not satisfy the $\lambda$-rectangle property.  Obtaining estimates for general domains is an open problem. Another limitation is that it must be possible to sample from the continuous measure $\rho$ over $\Omega$ in order to generate the $K$-grid.  In particular, and unlike in the method introduced in \cite{adcock2018approximating}, $\Omega$ must be known in advance in some suitable sense, and even then, procedures such as rejection sampling may become prohibitively expensive in high dimensions.  In practice, $\Omega$ may not be known in advance: see \cite{adcock2018approximating}, for instance, for examples motivated by uncertainty quantification where $\Omega$ can only be `learned' as the samples of $f(\bm{y})$ are taken.  An objective of future work is to investigate whether the procedure developed in this paper can be extended to certain settings where generating a fine grid beforehand is not feasible.
}

\section*{Acknowledgments}
This work was supported by the PIMS CRG in ``High-dimensional Data Analysis'' and by NSERC through grant R611675.


\bibliographystyle{abbrv}
\small
\bibliography{Optimal_SamplingRefs}

\end{document}